\newtheorem{theorem}{Theorem}[section]
\newtheorem{prop}[theorem]{Proposition}
\newtheorem{cor}[theorem]{Corollary}
\newtheorem{lem}[theorem]{Lemma}
\newtheorem{rem}[theorem]{Remark}
\newtheorem{rems}[theorem]{Remarks}
\def\ra{\rightarrow}
\def\o{\otimes}
\def\ot{\otimes}
\def\b{\beta}
\def\s{\sigma}
\def\a{\alpha}
\def\c{\gamma}
\def\G{\Gamma}
\def\mc{\mathcal}
\def\lan{\langle}
\def\ran{\rangle}
\def\l{\lambda}
\def\ve{\varepsilon}
\def\Kz{Kri\v{z} }
\def\S3{\mathcal{S}_3}
\def\pr{\rm pr}
\def\t{\tau}
\begin{document}
\title{Cohomology of 3-points configuration spaces of complex projective spaces}
\thanks{2010 AMS Classification Primary: 55R80, 20C30; Secondary: 55P62,
13A50.\\
\indent This research is partially supported by Higher Education Commission,
Pakistan.}
\author{Samia Ashraf$^{1}$, Barbu Berceanu$^{2}$}
\address{$^{1}$Abdus Salam School of Mathematical Sciences, GC University,
Lahore-Pakistan.}
\email{samia.ashraf@yahoo.com}
\address{$^{2}$Abdus Salam School of Mathematical Sciences, GC University,
Lahore-Pakistan, and Institute of Mathematics Simion Stoilow,
Bucharest-Romania (Permanent address).}
\email{Barbu.Berceanu@imar.ro}
\keywords{Configurations in complex projective spaces, \Kz model, representations of symmetric group}
\maketitle

\begin{abstract}
We compute the Betti numbers and describe the cohomology algebras
of the ordered and unordered configuration spaces of three points in complex projective spaces,
including the infinite dimensional case. We
also compute these invariants for the configuration spaces of
three collinear and non-collinear points.
\end{abstract}

\pagestyle{myheadings} \markboth{\centerline {\scriptsize SAMIA
ASHRAF,\,\,\,\,BARBU BERCEANU}}{\centerline {\scriptsize 3-points
configuration spaces}}

\textwidth=13cm \pagestyle{myheadings}

\section{Introduction}\label{section1}

The ordered configuration space of $n$ points $F(X,n)$ of a
topological space $X$ is defined as
$$F(X,n)=\{(x_1,x_2,\ldots,x_n)\in X^n \mid x_i\neq x_j \mbox{ for } i\neq j\}.$$
The symmetric group on $n$ letters acts freely on this space and
its orbit space is the unordered configuration space, denoted by
$C(X,n)$. The induced action on the cohomology algebra (with
rational coefficients) $H^*(F(X,n))$ has as the invariant part the
rational cohomology of $C(X,n)$.

For $X$ a smooth complex projective variety, I. Kri\v{z} \cite{K}
constructed a rational model $E(X,n)$ for $F(X,n)$, a simplified
version of the Fulton-MacPherson model \cite{FM}. Let us remind
the construction of Kri\v{z}. We denote by $p_i^*:H^*(X)\ra
H^*(X^n)$ and $p_{ij}^*:H^*(X^2)\ra H^*(X^n)$ (for $i\neq j$) the
pullbacks of the obvious projections and by $m$ the complex
dimension of $X$. The model $E(X,n)$ is defined as follows: as an
algebra, $E(X,n)$ is isomorphic to the exterior algebra with
generators $G_{ij},\,1\leq i,j\leq n$ (of degree $|G_{ij}|=2m-1$)
and coefficients in $H^*(X)^{\o n}$ modulo the relations
$$\begin{array}{llll}
  G_{ji}         & = & G_{ij},                    &  \\
  p_j^*(x)G_{ij} & = & p_i^*(x)G_{ij},            & (i< j),\, x\in H^*(X), \\
  G_{ik}G_{jk}   & = & G_{ij}G_{jk}-G_{ij}G_{ik}, & (i<j<k).
\end{array}$$
The differential $d$ is given by $d|_{H^*(X)^{\o n}}=0$ and
$d(G_{ij})=p_{ij}^*(\Delta)$, where $\Delta= w\o1+\ldots+1\o w\in
H^*(X)\o H^*(X)$ denotes the class of the diagonal and $w\in
H^{2m}(X)$ is the fundamental class. This model is a differential
bigraded algebra
$E(X,n)=\mathop{\bigoplus}\limits_{k,q}E_q^k(X,n)$: the lower
degree $q$ (called the exterior degree) is given by the number of
exterior generators $G_{ij}$, and the upper degree $k$ is given by
the total degree. The multiplication is homogeneous and the
differential has bidegree ${+1}\choose{-1}$
$$E_q^k\o E_{q'}^{k'}\ra E_{q+q'}^{k+k'},\,\,\,d:E_q^k\ra E_{q-1}^{k+1},$$
therefore the cohomology algebra of the configuration space,
$H_*^*=H_*^*(F(X,n))$, is a bigraded algebra and we will compute
the two variable Poincar\'{e} polynomial
$$ P_{F(X,n)}(t,s)=\sum_{k,q\geq 0}(\dim H_q^k)t^ks^q. $$
The \emph{canonical basis} for the Kri\v{z} model is given by
products of exterior elements
$G_{I_*J_*}=G_{i_1j_1}G_{i_2j_2}\ldots G_{i_qj_q}$, where
$I_*=(i_1,\ldots,i_q),\,J_*=(j_1,\ldots,j_q)$, satisfying
$i_a<j_a$ $(a=1,2,\ldots,q)$ and $j_1<j_2<\ldots<j_q$, with
scalars $x_*=x_1\o x_2\o \ldots \o x_n$ (the factors belong to a fixed basis of $H^*(X)$), and
$x_j=1$ if $j\in J_*$, see \cite{B}, \cite{BMP} or \cite{AAB} for more details).

The symmetric group $\mathcal{S}_n$ acts on each bigraded component $E_q^k(X,n)$ by
permuting the factors in $H^*(X^n)=H^{*\otimes n}$ and changing the indices of the exterior
generators (see \cite{FM} and \cite{K}): for an arbitrary permutation $\sigma\in \mathcal{S}_n$
$$ \sigma\big( p_1^*(x_1)\dots p_n^*(x_n)G_{I_*J_*}\big)=p_{\sigma(1)}^*(x_1)\dots
p_{\sigma(n)}^*(x_n)G_{\sigma(I_*)\sigma(J_*)}. $$
The differential is equivariant (see \cite{AAB}), and therefore we can split the
differential algebra $(E_*^*,d)$ into isotypical parts
corresponding to partitions $\lambda$ of $n$, $\lambda\vdash n$,
$$ (E_*^*(X,n),d)\cong\bigoplus_{\lambda\vdash n}(E_*^*(X,n)(\lambda),d_{\lambda}).$$

In \cite{FTh} Felix and Thomas proved that for even dimensional closed manifold $M$
the rational Betti numbers of $C(M,n)$ are
determined by the graded cohomology algebra $H^*(M;\mathbb{Q})$, and, as an application, they
computed the Betti numbers of $C(\mathbb{C}P^3,3)$. A presentation of the rational cohomology of
$C(\mathbb{C}P^2,n)$ appears in \cite{FT} by analyzing the $\mc{S}_n$-invariant part of the spectral sequence
converging to $H^*(F(M,n))$, introduced by Cohen and Taylor \cite{CT}; see also \cite{FOT}. In \cite{So} \Kz model
and some natural fibrations are used to compute $H^*(F(\mathbb{C}P^m,2))$ for an arbitrary finite $m$
and $H^*(F(\mathbb{C}P^m,3))$ for $m\leq4$. In \cite{FZ} Feichtner and Ziegler described the cohomology algebra of
$F(\mathbb{C}P^1,n)$ with integer coefficients. The symmetric structure of $H^*(F(\mathbb{C}P^1,n))$
 was considered in \cite{AAB}.

In this paper we analyze the cohomology of various configuration
spaces of three points in complex projective spaces $\mathbb{C}P^m$ for $m\geq2$,
including the infinite dimensional space, using the
representation theory of the Kri\v{z} model and some associated
spectral sequences.

In Section 2 we describe the symmetric structure of the \Kz model
$E_*^*=E_*^*(\mathbb{C}P^m,3)$. The irreducible modules corresponding to the
partition $\l$ will be denoted by $V(\l)$, as in \cite{FH}. The multiplicities of the
irreducible $\S3$-modules in $E_q^k$ are given in terms of
partitions and bounded partitions of an integer into three parts,
see Proposition \ref{lem2.1}. Explicit formulae for these numbers are given in
the Appendix. In the stable cases, $m\geq5$, the pairs $(k,q)$ with
non-zero $E_q^k(\lambda)$ and those
with non-zero $H_q^k(\lambda)$ are given in the next diagrams (the former
pairs are marked with small circles and the latter with bullets; the dots mark the
places where $E_q^k\neq 0$ and $E_q^k(\lambda)=0$).

\begin{picture}(360,90)
\put(117,55){\vector(0,-1){23}} \put(217,55){\vector(0,-1){23}}
\put(17,10){\vector(1,0){325}} \put(22,7){\vector(0,1){70}}
\put(27,65){$q$}    \put(335,15){$k$}
\put(307,55){\vector(0,-1){23}} \multiput(21,22)(0,13){2}{$. $}
\put(50,50){$E_*^*(3)$} \multiput(115,22)(10,0){10}{$\circ$}
\multiput(215,22)(10,0){10}{$\bullet$}
\multiput(210,35)(10,0){11}{$.$}         \put(315,22){$\circ$}
\multiput(20,7)(10,0){25}{$\bullet$}
\multiput(270,7)(10,0){6}{$\circ$}
\multiput(100,0)(50,0){3}{$\ldots$} \put(262,1){\vector(0,1){6}}
\begin{scriptsize}    \put(20,-2){$0$}   \put(30,-2){$2$} \put(111,60){2m-1}
\put(248,-2){$6m-12$} \put(317,0){$6m$}  \put(15,20){$1$} \put(207,60){4m-1}
\put(15,33){$2$}      \put(298,60){6m-3}
\end{scriptsize}
\end{picture}

\bigskip

\begin{picture}(360,90)
\put(117,55){\vector(0,-1){23}} \put(212,55){\vector(0,-1){13}}
\put(312,55){\vector(0,-1){13}} \put(50,50){$E_*^*(2,1)$}
\put(17,10){\vector(1,0){325}}  \put(22,7){\vector(0,1){70}}
\put(27,65){$q$}                        \put(335,15){$k$}
\multiput(20,9)(300,0){2}{$. $} \put(282,1){\vector(0,1){6}}
\multiput(115,22)(10,0){21}{$\circ$}
\multiput(21,22)(0,13){2}{$.$}
\multiput(210,35)(10,0){11}{$\circ$}
\multiput(30,7)(10,0){26}{$\bullet$}
\multiput(290,7)(10,0){3}{$\circ$}
\multiput(100,0)(50,0){3}{$\ldots$}
\begin{scriptsize}    \put(30,-2){$2$}  \put(40,-2){$4$}  \put(111,60){2m-1}
\put(267,-2){$6m-8$}  \put(317,0){$6m$} \put(15,20){$1$}  \put(202,60){4m-2}
\put(15,33){$2$}      \put(303,60){6m-2}
\end{scriptsize}
\end{picture}

\bigskip
\begin{picture}(360,90)
\put(17,10){\vector(1,0){325}} \put(22,7){\vector(0,1){70}}
\put(27,65){$q$} \put(335,15){$k$} \multiput(20,9.5)(10,0){31}{$. $}
\put(50,50){$E_*^*(1,1,1)$} \multiput(115,22)(10,0){21}{$.$}
\multiput(210,35)(10,0){11}{$.$}
\multiput(50,7)(10,0){25}{$\bullet$} \put(292,1){\vector(0,1){6}}
\multiput(100,0)(50,0){3}{$\ldots$} \multiput(21,22)(0,13){2}{$.$}
\begin{scriptsize}    \put(50,-2){$6$}  \put(60,-2){$8$}
\put(277,-2){$6m-6$}  \put(317,0){$6m$} \put(15,20){$1$}
\put(15,33){$2$}
\end{scriptsize}
\end{picture}

\bigskip
In Section \ref{section3} we compute the Betti numbers of each
isotypical part of $E_*^*(\mathbb{C}P^m,3)$. Summing up their
Poincar\'{e} polynomials we obtain
\begin{theorem}\label{thm1}
The two variable Poincar\'{e} polynomial for the ordered configuration space $F(\mathbb{C}P^m,3)$ is given by:
$$P_{F(\mathbb{C}P^m,3)}(t,s)= C_{m-1}^3(t)+st^{4m-1}C_{m-1}(t).$$
\end{theorem}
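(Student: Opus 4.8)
The plan is to read off the formula from a degree-by-degree computation of $H_*^*=H_*^*(F(\mathbb{C}P^m,3))$ carried out on the \Kz model $E_*^*=E_*^*(\mathbb{C}P^m,3)$. Following the organisation of Section~\ref{section3} one does this on the three isotypical summands of $(E_*^*,d)=\bigoplus_{\l\vdash 3}(E_*^*(\l),d_{\l})$ and then adds up the three Poincar\'{e} polynomials of $H_*^*(\l)$, $\l\in\{(3),(2,1),(1,1,1)\}$; the computation runs in the same way on the whole complex, which I describe here. Write $H^*(\mathbb{C}P^m)=\mathbb{C}[h]/(h^{m+1})$, $h_i=p_i^*(h)$, $\Delta_{ij}=p_{ij}^*(\Delta)=\sum_{a+b=m}h_i^ah_j^b$, and set $C_m(t)=1+t^2+\dots+t^{2m}$, so $C_{m-1}(t)=C_m(t)-t^{2m}$. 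Because $n=3$ the model lives in exterior degrees $q\in\{0,1,2\}$, and since $d$ raises the total degree by $1$ and lowers $q$ by $1$ it preserves $k+q$; thus $(E_*^*,d)$ is the direct sum over $n$ of three-term complexes $E_2^{n-2}\to E_1^{n-1}\to E_0^{n}$ (all arrows equal to $d$), so $H_q^k=0$ for $q\ge 3$ and only $H_0^*,H_1^*,H_2^*$ remain. From the canonical basis one gets $P_{E_0}(t)=C_m(t)^3$, $P_{E_1}(t)=3t^{2m-1}C_m(t)^2$ and $P_{E_2}(t)=2t^{4m-2}C_m(t)$.

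Two facts then carry the proof. First, $H_2^*=0$: I would verify that $d\colon E_2\to E_1$ is injective by expanding $d(x_*G_{12}G_{13})=x_*(\Delta_{12}G_{13}-\Delta_{13}G_{12})$ and the analogous image of $x_*G_{12}G_{23}$, rewriting them in the canonical basis of $E_1$ using $p_j^*(x)G_{ij}=p_i^*(x)G_{ij}$, and noticing that the $G_{12}$- and $G_{23}$-components of a $d$-cycle already force all scalars to vanish. Second, $H_0^*$ is an explicit ring: since $d(x_*G_{ij})=x_*\Delta_{ij}$ and in $E_0=\mathbb{C}[h_1,h_2,h_3]/(h_1^{m+1},h_2^{m+1},h_3^{m+1})$ one has $(h_i-h_j)\Delta_{ij}=h_i^{m+1}-h_j^{m+1}=0$, hence $h_i\Delta_{ij}=h_j\Delta_{ij}$, the subspace $d(E_1)$ is precisely the ideal generated by $\Delta_{12},\Delta_{13},\Delta_{23}$, giving
$$H_0^*\ \cong\ A:=\mathbb{C}[h_1,h_2,h_3]\,/\,(h_1^{m+1},h_2^{m+1},h_3^{m+1},\Delta_{12},\Delta_{13},\Delta_{23}).$$
The crux of the whole argument is the identity $P_A(t)=C_{m-1}(t)^3$, and this is the step I expect to be the main obstacle.

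To prove $P_A(t)=C_{m-1}(t)^3$ I would build $A$ in stages. Quotienting $\mathbb{C}[h_1,h_2,h_3]$ by $h_1^{m+1}$ and $\Delta_{12}$ gives $B[h_3]$ with $B=\mathbb{C}[h_1,h_2]/(h_1^{m+1},\Delta_{12})$ free of rank $m$ over $\mathbb{C}[h_1]/(h_1^{m+1})$ (as $\Delta_{12}$ is monic of degree $m$ in $h_2$, and $h_2^{m+1}$ is redundant by the relation above); imposing also $\Delta_{13}$, which is monic of degree $m$ in $h_3$, makes the quotient $R$ free of rank $m$ over $B$, so $P_R(t)=P_B(t)\,C_{m-1}(t)=C_m(t)\,C_{m-1}(t)^2$, and $A=R/(\Delta_{23})$. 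It then remains to show that $\Delta_{23}\cdot R$ has Poincar\'{e} polynomial $t^{2m}C_{m-1}(t)^2$, for then $P_A(t)=P_R(t)-t^{2m}C_{m-1}(t)^2=C_{m-1}(t)^3$; here the linear syzygy $(h_1-h_2)\Delta_{12}+(h_2-h_3)\Delta_{23}+(h_3-h_1)\Delta_{13}=0$ shows that $h_2-h_3$ annihilates $\Delta_{23}$ in $R$, and I would combine this with the explicit $\mathbb{C}[h_1]/(h_1^{m+1})$-basis of $R$ to exhibit a monomial basis of $A$ of cardinality $m^3$. (In the isotypical approach of Section~\ref{section3} this is replaced by computing the three pieces $H_0^*(\l)$ of $A$ separately, indexed by bounded partitions of an integer into three parts.)

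Finally, $H_2^*=0$ together with $P_{H_0^*}(t)=C_{m-1}(t)^3$ and the three-term complex give, in each total degree $n$, $\dim H_1^{n-1}=\dim E_1^{n-1}-(\dim E_0^{n}-\dim A^{n})-\dim E_2^{n-2}$, that is
$$P_{H_1^*}(t)=P_{E_1}(t)-t^{-1}\big(P_{E_0}(t)-P_A(t)\big)-t\,P_{E_2}(t).$$
Substituting the polynomials above and simplifying with $C_m-C_{m-1}=t^{2m}$ and $2C_m^2-C_mC_{m-1}-C_{m-1}^2=(2C_m+C_{m-1})(C_m-C_{m-1})$ collapses the right-hand side to $t^{4m-1}C_{m-1}(t)$. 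Hence $P_{F(\mathbb{C}P^m,3)}(t,s)=P_{H_0^*}(t)+s\,P_{H_1^*}(t)=C_{m-1}^3(t)+st^{4m-1}C_{m-1}(t)$, and the infinite-dimensional case follows by letting $m\to\infty$, the second summand then disappearing.
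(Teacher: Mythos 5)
Your route is genuinely different from the paper's. The paper splits $(E_*^*,d)$ into the three $\mathcal{S}_3$-isotypical pieces and, in Lemmas \ref{lem31}--\ref{lem310}, computes the cocycles in $E_1^*(3)$ and $E_1^*(2,1)$ by explicit filtrations and leading-monomial arguments; the theorem then falls out by assembling these counts. You instead work with the whole complex, observe that $H_2^*=0$ and that $H_0^*\cong A=\mathbb{C}[h_1,h_2,h_3]/(h_i^{m+1},\Delta_{12},\Delta_{13},\Delta_{23})$ (your identification of $d(E_1)$ with the ideal generated by the $\Delta_{ij}$, via $h_i\Delta_{ij}=h_j\Delta_{ij}$, is correct), and then recover $P_{H_1^*}$ by pure dimension bookkeeping along each anti-diagonal $k+q=n$. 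I checked the bookkeeping and the final simplification: granting $P_A=C_{m-1}^3$ and the injectivity of $d$ on $E_2$, everything collapses to $C_{m-1}^3+st^{4m-1}C_{m-1}$ exactly as you say, and the $E_2$-injectivity is indeed a short canonical-basis computation. This is an attractive reorganization: it trades the paper's seven cocycle lemmas for a single commutative-algebra identity.

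But that identity, $P_A(t)=C_{m-1}(t)^3$, is where all of the paper's hard work is hiding, and your treatment of it is not a proof. The first two stages are fine ($\Delta_{12}$ and $\Delta_{13}$ are monic in $h_2$, resp.\ $h_3$, so $R$ is free with $P_R=C_mC_{m-1}^2$, and the $h_j^{m+1}$ are redundant). The gap is the last stage: you need $P_{\Delta_{23}\cdot R}(t)=t^{2m}C_{m-1}(t)^2$, and the syzygy $(h_2-h_3)\Delta_{23}=0$ in $R$ only tells you that $\Delta_{23}R$ is spanned by the elements $h_1^ah_2^e\Delta_{23}$. That gives the right \emph{upper} bound $j+1$ in degree $2(m+j)$ for $j\le m-1$, but you must still prove these spanning elements are linearly independent there, and in the range $j\ge m$ you must exhibit enough further relations to cut the span down to $2m-1-j$ --- since $\Delta_{23}$ is a zero-divisor in $R$, no regular-sequence or freeness argument applies, and "exhibit a monomial basis of cardinality $m^3$" is precisely the nontrivial claim, not a method. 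This is the same computation the paper performs via the filtrations of Lemmas \ref{lem31}, \ref{lem32}, \ref{lem34}, \ref{lem35}, \ref{lem3.8}--\ref{lem310} (knowing the cocycles in $E_1$ is equivalent to knowing $\dim d(E_1)=\dim E_0-\dim A$). So the architecture is sound and the target identity is true, but as written the proof has a genuine hole at its self-declared crux; to close it you would need either to carry out the independence/relation analysis for the classes $h_1^ah_2^e\Delta_{23}$ in $R$, or to import the paper's leading-term arguments.
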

In this theorem and the next ones we use the notation
$$r_m(x,y)=x^m+x^{m-1}y+\ldots+y^m,C_m(t)=r_m(1,t^2)=1+t^2+\ldots+t^{2m}.$$
In the Appendix one can find the proof of following:
\begin{cor}\label{cor1}
The sequences of even and odd Betti numbers of the configuration
space $F(\mathbb{C}P^m,3)$ and also the sequences of its
isotypic parts are unimodal:
$$\begin{array}{lllll}
\b_0\leq\b_2\leq  & \ldots & \leq\b_{2i}\quad\geq\b_{2i+2}\geq &
                    \ldots & \geq\b_{6m-2}\geq\b_{6m},\\
\b_1\leq\b_3\leq  & \ldots & \leq\b_{2j-1}\geq\b_{2j+1}\geq &
                    \ldots & \geq\b_{6m-3}\geq\b_{6m-1}.
\end{array}$$
\end{cor}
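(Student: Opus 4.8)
First I would set $s=1$ in Theorem \ref{thm1}, so that the ordinary Poincar\'e polynomial of $F(\mathbb{C}P^m,3)$ is $C_{m-1}^3(t)+t^{4m-1}C_{m-1}(t)$, and then notice that the two summands live in complementary parities of the total degree: $C_{m-1}(t)=1+t^2+\dots+t^{2m-2}$ is a polynomial in $t^2$, so $C_{m-1}^3(t)$ contributes only to the even Betti numbers, whereas $t^{4m-1}C_{m-1}(t)=t^{4m-1}+t^{4m+1}+\dots+t^{6m-3}$ is supported in odd degrees and contributes only to the odd ones. Consequently $\b_{2i}$ is the coefficient of $t^{2i}$ in $C_{m-1}^3(t)$, while $\b_{2j+1}=1$ for $4m-1\le 2j+1\le 6m-3$ and $\b_{2j+1}=0$ otherwise; the odd sequence $\b_1,\b_3,\b_5,\dots$ is thus $0,\dots,0,1,\dots,1,0,\dots$ --- a single block of $1$'s --- which is unimodal with no further argument.

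It then remains to see that $C_{m-1}^3(t)$, i.e.\ $(1+u+\dots+u^{m-1})^3$ with $u=t^2$, has a symmetric and unimodal coefficient sequence. The square $(1+u+\dots+u^{m-1})^2$ has coefficients $1,2,\dots,m-1,m,m-1,\dots,2,1$, which is manifestly symmetric about its centre and unimodal, and multiplying a symmetric unimodal sequence $(c_k)$ by $1+u+\dots+u^{m-1}$ produces the sliding-window sums $d_k=c_k+c_{k-1}+\dots+c_{k-m+1}$, for which $d_k-d_{k-1}=c_k-c_{k-m}$: this is $\ge 0$ precisely when $k$ is weakly closer than $k-m$ to the peak of $(c_k)$, which holds for all $k$ up to a threshold and fails beyond it, so $(d_k)$ is again symmetric and unimodal. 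This gives exactly the chain of inequalities for the even Betti numbers in the corollary, with zeros appended past degree $6m-6$.

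For the isotypic parts I would run the same machine on the Poincar\'e polynomials $P_{F(\mathbb{C}P^m,3)}(t,1)(\l)$, $\l\vdash 3$, computed in Section \ref{section3}. The plan is to check, parity by parity, that each of these is a nonnegative combination of monomials $t^a$ times products $\prod_i r_{k_i}(1,t^2)$ all sharing a single centre; then one is done, because a product of balanced polynomials $\prod_i(1+u+\dots+u^{k_i})$ is symmetric and unimodal by iterating the sliding-window observation of the previous paragraph (each factor carries a symmetric unimodal sequence to another one and moves the centre by $k_i/2$), and a sum of unimodal polynomials that are all palindromic about the \emph{same} point is again unimodal. Conceptually this is the statement that the even and odd parts of each $\mathcal{S}_3$-isotypic component of $H_*^*(\mathbb{C}P^m,3)$ are finite-dimensional $\mathfrak{sl}_2(\mathbb{C})$-modules --- the $\mathfrak{sl}_2(\mathbb{C})$-action on the Kri\v{z} model of Section 2 commutes with $\mathcal{S}_3$ and raises or lowers the total degree by $2$ --- and that weight multiplicities of such modules are symmetric and unimodal.

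The one genuinely delicate step is the last: for each of the three partitions of $3$ one must confirm that the Section \ref{section3} formula really does organize, inside a fixed parity, as a single centred product of balanced polynomials (or a centred sum of several such). The common-centre requirement is what does the work --- a sum of unimodal polynomials with \emph{different} centres need not be unimodal --- so this bookkeeping is exactly where the argument could fail and where a short case analysis is required; everything else is just the elementary sliding-window lemma.
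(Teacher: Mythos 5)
Your treatment of the total Betti numbers is correct and is essentially the paper's Case 1: setting $s=1$ in Theorem \ref{thm1}, the even part is $C_{m-1}^3(t)$, whose coefficient sequence is symmetric and unimodal by your sliding-window lemma, and the odd part $t^{4m-1}C_{m-1}(t)$ is a single block of $1$'s. The gap is in the isotypic parts, and it is not merely the ``delicate bookkeeping'' you flag --- the structure you propose to verify does not exist. Any nonnegative combination of terms $t^a\prod_i r_{k_i}(1,t^2)$ sharing a single centre is palindromic about that centre, and likewise the weight multiplicities of a finite-dimensional $\mathfrak{sl}_2(\mathbb{C})$-module are symmetric about $0$; but the even Betti sequences of the isotypic components are \emph{not} symmetric. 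Already for the trivial component $V(3)$, i.e.\ for $C(\mathbb{C}P^3,3)$, the even Poincar\'e polynomial is $1+t^2+2t^4+2t^6+t^8$, with coefficient sequence $1,1,2,2,1$, which is palindromic about no point; the paper's closing Remarks record exactly this failure of symmetry. (There is no hard Lefschetz $\mathfrak{sl}_2(\mathbb{C})$-action here: the configuration spaces are open, and while the Kri\v{z} model itself is Poincar\'e self-dual in $k$, the differential destroys this on cohomology --- which is why the corrections $+m-k-1$ and $-3m+k$ in Proposition \ref{prop37} break the symmetry.) So the last step of your plan fails, and it fails precisely on the part of the corollary that requires work.

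The paper's route is different and is what you would need to substitute. It first proves combinatorially (Appendix) that the bounded partition numbers $P_{3,\leq m}(k)$ form a symmetric, strictly unimodal sequence, via explicit injections between consecutive sets of partitions. Each even isotypic Betti number is then $P_{3,\leq m}(k)$ (resp.\ $\mu_{2,1}(k)$, $\mu_{1,1,1}(k)$) plus a piecewise-linear correction recording the rank of the differential, e.g.\ $+m-k-1$ on $m\leq k\leq \lfloor 3m/2\rfloor$ and $-3m+k$ on $2m\leq k\leq 3m-1$ for the $V(3)$ part. Unimodality is checked range by range --- the key point being that strict growth $P_{3,\leq m}(k)<P_{3,\leq m}(k+1)$ absorbs a correction that drops by one per step --- together with three seam inequalities where the ranges meet. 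Your sliding-window lemma handles the full space and the (trivially unimodal) odd sequences, but for the even isotypic sequences you must replace the common-centre argument by this kind of monotonicity analysis of a symmetric unimodal sequence perturbed by a monotone linear term.
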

In Section 4 the multiplicative structure of the cohomology
algebras $H^*(F(\mathbb{C}P^m,3))$ and $H^*(C(\mathbb{C}P^m,3))$ is analyzed.
\begin{theorem}\label{thm3}
The cohomology algebra $H^*(F(\mathbb{C}P^m,3))$ has the presentation (as a graded commutative algebra):
$$\lan\a_1,\a_2,\a_3,\eta\mid \a_1^{m+1},r_m(\a_i,\a_j), (\a_i-\a_j)\eta,\a_1^m\eta\ran$$
where $|\a_i|=2$, $|\eta|=4m-1.$
\end{theorem}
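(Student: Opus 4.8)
The plan is to identify the cohomology ring $H^*(F(\mathbb{C}P^m,3))$ with the isotypical decomposition already set up in the excerpt, and to recognize the four named generators inside the Kri\v{z} model $E_*^*(\mathbb{C}P^m,3)$. Write $H^*(\mathbb{C}P^m)=\mathbb{Q}[x]/(x^{m+1})$ with $|x|=2$, so $H^*(X^3)=\mathbb{Q}[x_1,x_2,x_3]/(x_1^{m+1},x_2^{m+1},x_3^{m+1})$, where $x_i=p_i^*(x)$. The generator $\a_i$ is the class of $x_i$; the class $\eta$ will be a suitably normalized cocycle built from the exterior generators $G_{12},G_{13},G_{23}$. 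A natural candidate is a combination such as $x_1^{m}G_{12}$ modified so that it is a $d$-cocycle: since $d(G_{ij})=p_{ij}^*(\Delta)=\sum_{a}x_i^a x_j^{m-a}$ in degrees adding to $2m$, one has $d(x_j^m G_{ij})=x_j^m\sum_a x_i^a x_j^{m-a}=x_i^m x_j^m$ (all other terms vanish by $x_j^{m+1}=0$), so $x_j^m G_{ij}$ is a cocycle iff $x_i^m x_j^m=0$; this already holds after imposing $r_m(\a_i,\a_j)=0$, which is exactly the cohomology relation $\sum_a \a_i^a\a_j^{m-a}=0$ coming from the ideal. Thus $\eta:=[x_1^m G_{12}]=[x_1^m G_{13}]=[x_2^m G_{23}]$ is well-defined, of degree $2m+(2m-1)=4m-1$, and lives in the exterior-degree-one part; one checks $\eta$ generates the line appearing as the $\bullet$ at $(k,q)=(4m-1,1)$ in the diagrams.

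Next I would verify that the listed relations hold in $H^*$. The relation $\a_1^{m+1}=0$ is inherited from $H^*(X)$. For $r_m(\a_i,\a_j)=0$: in $E_*^*$ one has $p_{ij}^*(\Delta)=\sum_a x_i^a x_j^{m-a}=d(G_{ij})$, so $r_m(\a_i,\a_j)=[\,d(G_{ij})\,]=0$ in cohomology — this is the standard source of relations in the Kri\v{z} model. For $(\a_i-\a_j)\eta=0$: using the Kri\v{z} relation $p_j^*(x)G_{ij}=p_i^*(x)G_{ij}$ we get $x_j\cdot x_i^m G_{ij}=x_i^{m+1}G_{ij}=0$ and $x_i\cdot x_i^m G_{ij}=x_i^{m+1}G_{ij}=0$, so $(\a_i-\a_j)\eta=0$ already at the cochain level. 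For $\a_1^m\eta=0$: $\a_1^m\eta=[x_1^{2m}G_{12}]=0$ since $x_1^{2m}=0$ as $2m>m$. Hence there is a well-defined surjection from the abstract algebra $A:=\lan\a_1,\a_2,\a_3,\eta\mid\ldots\ran$ onto $H^*(F(\mathbb{C}P^m,3))$ — surjectivity because the $\a_i$ generate $H^*(X^3)$ and the diagrams show $H^*$ is concentrated in exterior degrees $0$ and $1$, with the exterior-degree-$1$ part a cyclic module that is hit by $\eta$ times $H^*(X^3)$.

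The main obstacle is injectivity: proving that $A$ has no more relations than those claimed, i.e. $\dim_{\mathbb{Q}} A = \dim_{\mathbb{Q}} H^*(F(\mathbb{C}P^m,3))$, with the latter known from Theorem \ref{thm1}. I would compute the Poincar\'e series of $A$ directly. The subalgebra $B:=\mathbb{Q}[\a_1,\a_2,\a_3]/(\a_1^{m+1},r_m(\a_i,\a_j))$ is the relevant "configuration-space analogue" of the coinvariant-type ring; one shows its Hilbert series is $C_{m-1}^3(t)$ — this is the $q=0$ part of Theorem \ref{thm1}, and can be seen by a Gr\"obner/monomial-basis argument (the relations $r_m(\a_i,\a_j)$ let one reduce any monomial to one in which each pair of variables has combined degree $<m$ in an appropriate sense, matching the Orlik–Solomon–type count $C_{m-1}^3$). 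Then $A = B \oplus B\eta$ as a $B$-module, and the relations $(\a_i-\a_j)\eta=0$, $\a_1^m\eta=0$ force $B\eta$ to be the quotient $B/(\a_1-\a_2,\a_1-\a_3,\a_1^m)\cong \mathbb{Q}[\a_1]/(\a_1^m)$ (the relation $r_m(\a_1,\a_1)=(m+1)\a_1^m$ is then redundant in characteristic $0$, consistent with $\a_1^m$ already being killed), which has Hilbert series $C_{m-1}(t)$ shifted by $t^{4m-1}$. Summing gives exactly $C_{m-1}^3(t)+st^{4m-1}C_{m-1}(t)$ at $s=1$, matching Theorem \ref{thm1}, so the surjection $A\twoheadrightarrow H^*$ is an isomorphism. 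The only genuinely delicate bookkeeping is the monomial-basis count for $B$; everything else is formal once the three Kri\v{z} relations are invoked.
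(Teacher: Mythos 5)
Your overall strategy (map the presented algebra onto $H^*$, then match dimensions against Theorem \ref{thm1}) is reasonable, but the proof as written has a fatal flaw at its starting point: the class $\eta$ is not well defined. In the Kri\v{z} model $x_1^mG_{12}$ is \emph{not} a cocycle: $d(x_1^mG_{12})=x_1^m\sum_a x_1^ax_2^{m-a}=x_1^m\otimes x_2^m\otimes 1\neq 0$ in $E_0^{4m}$. The fact that $x_1^mx_2^m$ becomes zero \emph{in cohomology} (being a coboundary) does not make $x_1^mG_{12}$ a cocycle, so $[x_1^mG_{12}]$ is not a cohomology class; one cannot ``impose the relation $r_m(\a_i,\a_j)=0$'' before passing to cohomology. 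The actual generator of $H^{4m-1}$ is the weighted cocycle $W=\sum_a(2m-3a)\big(x^{m-a}\otimes1\otimes x^aG_{12}+x^{m-a}\otimes x^a\otimes1G_{13}+x^a\otimes x^{m-a}\otimes1G_{23}\big)$ of Lemma \ref{lem34}, and for it the relations $(\a_i-\a_j)\eta$ and $\a_1^m\eta$ hold only up to explicit coboundaries (e.g.\ $(x\otimes1\otimes1-1\otimes x\otimes1)W=d(3(1\otimes x\otimes1)G_{13}G_{23})$), not on the nose. Your choice is also detectably wrong a posteriori: the cochain identities you invoke give $\a_1\eta=\a_2\eta=0$, whereas the odd Poincar\'e polynomial $t^{4m-1}C_{m-1}(t)$ forces $\a_1\eta$ to span the nonzero group $H^{4m+1}$ for $m\geq2$. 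So your candidate represents the wrong class, and the verification of the relations $(\a_i-\a_j)\eta=0$, $\a_1^m\eta=0$ must be redone from scratch with $W$.

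The second gap is the injectivity step. Everything is made to rest on the assertion that $B=\mathbb{Q}[\a_1,\a_2,\a_3]/(\a_i^{m+1},r_m(\a_i,\a_j))$ has Hilbert series $C_{m-1}^3(t)$, deferred to ``a Gr\"obner/monomial-basis argument.'' That count is precisely the hard content: it amounts to showing the three diagonal classes cut down $E_0^*$ by the maximal possible amount, which is what the injectivity Lemmas \ref{lem31} and \ref{lem3.8} for $d:E_1\to E_0$ establish; asserting it is circular with respect to Theorem \ref{thm1}. The paper avoids any dimension count of the abstract algebra $\mc{A}$ by a different device: it first presents the full algebra of cocycles $Z_*^*$ (Lemma \ref{lem44}, with generators $a_i,w,v_1,v_2$), builds a morphism $\psi_0:\mc{B}\to\mc{A}$ killing $v_1,v_2$, checks that $\psi_0$ annihilates the preimages of coboundaries, and so obtains a retraction $\psi:H^*\to\mc{A}$ with $\psi\varphi=\mathrm{id}_{\mc{A}}$. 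If you want to keep your dimension-count route you must actually prove the Hilbert series claim for $B$ (or cite it), and in any case you must replace $\eta$ by the honest cocycle $W$ and verify the relations up to coboundary.
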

From the Newton's version of the fundamental theorem of symmetric polynomials,
there are uniquely defined polynomials $P_k\in\mathbb{Q}[X_1,X_2,X_3]$ such that
$$X_1^k+X_2^k+X_3^k=P_k(X_1+X_2+X_3,X_1^2+X_2^2+X_3^2,X_1^3+X_2^3+X_3^3),$$
and these are used in Theorem \ref{thm2} and in Section \ref{section4}.
\begin{theorem}\label{thm2}
The cohomology algebra $H^*(C(\mathbb{C}P^m,3))$ has the
presentation (as a graded commutative algebra):
$$\left\lan\begin{array}{c}
            \t_1,\t_2,\t_3, \\
                      \\
            \eta
           \end{array}\right.
\left|\begin{array}{c}
            P_{m+1},P_{m+2},P_{m+3},(\t_1^2-3\t_2)\eta, (\t_1\t_2-3\t_3)\eta, P_m\eta, \\
            (m+1)P_m-\mathop{\sum}\limits_{i=0}^m P_iP_{m-i},\mathop{\sum}\limits_{i=1}^{m-1} P_iP_{m-i},
                                \mathop{\sum}\limits_{i=2}^{m-2} P_iP_{m-i}
                             \end{array}\right\ran$$
where $|\t_i|=2i$ $(i=1,2,3)$, $|\eta|=4m-1$ and $P_k=P_k(\t_1,\t_2,\t_3)$.
\end{theorem}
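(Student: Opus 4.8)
The plan is to derive the presentation of $H^*(C(\mathbb{C}P^m,3))$ from that of $H^*(F(\mathbb{C}P^m,3))$ given in Theorem \ref{thm3} by taking $\mathcal{S}_3$-invariants, using the fact that over $\mathbb{Q}$ one has $H^*(C(\mathbb{C}P^m,3))\cong H^*(F(\mathbb{C}P^m,3))^{\mathcal{S}_3}$. First I would recall that $\mathcal{S}_3$ acts on the presentation of Theorem \ref{thm3} by permuting $\a_1,\a_2,\a_3$ and fixing $\eta$ up to sign (from the diagrams and the structure of the model, $\eta$ spans a copy of $V(2,1)$ or the sign rep paired with the permutation action; I would pin down the exact character by tracking the generator of $H^{4m-1}(\lambda)$ in Section \ref{section3}). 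Then $H^*(F(\mathbb{C}P^m,3))$ decomposes as a module over the invariant subalgebra $A=\mathbb{Q}[\a_1,\a_2,\a_3]^{\mathcal{S}_3}/(\text{invariant relations})$, and I would compute the invariants of the $\eta$-part separately.

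Next I would introduce $\t_1=\a_1+\a_2+\a_3$, $\t_2=\a_1^2+\a_2^2+\a_3^2$, $\t_3=\a_1^3+\a_2^3+\a_3^3$ as generators of the symmetric polynomials (Newton's identities guarantee these generate $\mathbb{Q}[\a_i]^{\mathcal{S}_3}$), so that a symmetric polynomial $X_1^k+X_2^k+X_3^k$ becomes $P_k(\t_1,\t_2,\t_3)$ by definition of the $P_k$. I would then translate each relation of Theorem \ref{thm3} into the $\t$-variables: the relation $\a_i^{m+1}=0$ for all $i$ is equivalent, in the presence of the $\mathcal{S}_3$-action, to the vanishing of the three power sums $\sum \a_i^{m+1}$, $\sum\a_i^{m+2}$, $\sum\a_i^{m+3}$ (one needs $m+1, m+2, m+3$ consecutive exponents to force each $\a_i^{m+1}=0$ via the Cayley–Hamilton/Newton mechanism over a commutative ring), giving $P_{m+1}=P_{m+2}=P_{m+3}=0$. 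The relations $r_m(\a_i,\a_j)=0$ become, after symmetrization, the three relations $(m+1)P_m-\sum_{i=0}^m P_iP_{m-i}$, $\sum_{i=1}^{m-1}P_iP_{m-i}$, $\sum_{i=2}^{m-2}P_iP_{m-i}$ — here I would use the elementary identity $r_m(x,y)=\sum_{i=0}^m x^iy^{m-i}$ and expand $\sum_{i\neq j} r_m(\a_i,\a_j)$, $\sum_{i} \a_i^k r_m(\a_i,\a_j)$-type symmetrizations in power sums; the three independent symmetrizations of the six relations $r_m(\a_i,\a_j)$ (indexed by the three natural symmetric combinations of weight-graded pieces) yield exactly these three. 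Finally $(\a_i-\a_j)\eta=0$ for all $i,j$ forces $\eta$ to be annihilated by everything in the augmentation ideal except symmetric functions, equivalently by $\t_1^2-3\t_2$ and $\t_1\t_2-3\t_3$ (the ideal generated by the $\a_i-\a_j$ inside $\mathbb{Q}[\a]$ is generated after symmetrization by these two quadratic/cubic discrimin\-ant-type elements), and $\a_1^m\eta=0$ symmetrizes to $P_m\eta=0$.

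The last step is to check that these relations are not merely necessary but suffice, i.e. that the quotient algebra $B$ defined by the stated presentation in the variables $\t_1,\t_2,\t_3,\eta$ is isomorphic to $H^*(F)^{\mathcal{S}_3}$ and not something larger. For this I would compute the Poincar\'{e} polynomial of $B$ directly from the presentation (the $\eta=0$ part is $\mathbb{Q}[\t_1,\t_2,\t_3]$ modulo the three $P$-relations, a complete intersection of the expected dimension since $P_{m+1},P_{m+2},P_{m+3}$ form a regular sequence by a degree/dimension count, tensored with the combinatorics of the three $r_m$-symmetrizations; the $\eta$-part is a cyclic module over this ring cut down by $\t_1^2-3\t_2,\t_1\t_2-3\t_3,P_m$) and match it against the $\mathcal{S}_3$-invariant part of $P_{F(\mathbb{C}P^m,3)}(t,s)$ from Theorem \ref{thm1}, extracted via the character computations of Section \ref{section3} / Proposition \ref{lem2.1}. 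The main obstacle I anticipate is this last bookkeeping: showing the stated relations generate \emph{all} symmetrization relations (no hidden relations are missing) and that no spurious elements survive — equivalently, proving the surjection $B\twoheadrightarrow H^*(F)^{\mathcal{S}_3}$ is injective by a dimension count in every bidegree. Getting the regular-sequence property for $P_{m+1},P_{m+2},P_{m+3}$ and correctly identifying the three symmetrizations of $\{r_m(\a_i,\a_j)\}$ as the displayed convolution sums $\sum P_iP_{m-i}$ over the ranges $[0,m],[1,m-1],[2,m-2]$ is the computational heart of the argument.
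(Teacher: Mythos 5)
Your route is genuinely different from the paper's. You propose to take $\mathcal{S}_3$-invariants of the finished presentation of $H^*(F(\mathbb{C}P^m,3))$ from Theorem \ref{thm3}, whereas the paper never leaves the Kri\v{z} model: it first presents the algebra of invariant cocycles $Z_*^*(3)$ (Lemma \ref{lem45}: generators $t_1,t_2,t_3,W$ and the six relations not involving quadratic expressions in the $P_k$), and then divides by the ideal of coboundaries, which it identifies explicitly by showing that $E_1^*(3)$ is generated as a module over the symmetric functions by the elements $\Gamma_k$ and $M(t_1,t_2,t_3)\Gamma_0$ (Lemma \ref{lem46}) and by computing $d(\Gamma_0),d(\Gamma_1),d(\Gamma_2)$ together with the recursion $d(\Gamma_k)=\sigma_1 d(\Gamma_{k-1})-\sigma_2 d(\Gamma_{k-2})+\sigma_3 d(\Gamma_{k-3})$. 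Your approach is conceptually cleaner (everything happens in cohomology, and exactness of $(-)^{\mathcal{S}_3}$ over $\mathbb{Q}$ gives $H^*(C)\cong H^*(F)^{\mathcal{S}_3}$ for free), and several of your translations are right: $\eta$ is honestly invariant (it spans the $V(3)$-component of $H^{4m-1}$ --- indeed the whole odd cohomology of $F(\mathbb{C}P^m,3)$ is invariant --- so no sign issue arises), the invariant part of the ideal $(\alpha_i-\alpha_j)$ is exactly $(\tau_1^2-3\tau_2,\tau_1\tau_2-3\tau_3)$, and $\sum_{i\neq j}r_m(\alpha_i,\alpha_j)=\sum_{i=0}^m P_iP_{m-i}-(m+1)P_m$.

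The place where your plan is not yet a proof is the one you flag yourself. For an $\mathcal{S}_3$-stable ideal $I$, the invariant ideal $I\cap\mathbb{Q}[\alpha]^{\mathcal{S}_3}$ is generated by the Reynolds symmetrizations $\mathrm{Rey}(f\cdot g)$ with $f$ a generator of $I$ and $g$ running over a basis of $\mathbb{Q}[\alpha_1,\alpha_2,\alpha_3]$ as a free rank-six module over the symmetric functions; so each orbit of relations $r_m(\alpha_i,\alpha_j)$ a priori contributes six families of invariant relations, not three, and one must prove that the $\mathbb{Q}[\tau_1,\tau_2,\tau_3]$-module they span is already generated by the three displayed convolution sums. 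This is exactly the content the paper extracts from Lemma \ref{lem46}: the coboundary ideal is the span of $M(t_1,t_2,t_3)d(\Gamma_0)$ and $d(\Gamma_1),d(\Gamma_2)$, everything else being recovered by the recursion (plus $\sigma_3P_m$, which already lies in $(P_{m+1},P_{m+2},P_{m+3})$). Your fallback of a Poincar\'e-polynomial comparison also needs care about direction: surjectivity of the map from the presented algebra $B$ onto $H^*(C)$ gives $\dim B^k\geq\dim H^k(C)$, so what must be supplied is the upper bound $\dim B^k\leq\dim H^k(C)$, i.e.\ that the displayed relations already cut the ring down far enough in every degree; since the quotient by all nine relations is no longer a complete intersection, this amounts to exhibiting enough independent elements of the relation ideal in each weight, which is the paper's Lemmas \ref{lem31} and \ref{lem46} in disguise. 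So the approach is viable, but the ``bookkeeping'' you defer is where essentially all of the paper's actual argument lives, and you would end up reproving its key lemmas in the invariant-theory language.
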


In the next Section we describe the ``stable cohomology" of the
configuration spaces: as the limit
$\lim_mF(\mathbb{C}P^m,n)=F(\mathbb{C}P^{\infty},n)$ has the
homotopy type of $(\mathbb{C}P^{\infty})^n$, see Theorem \ref{theorem5.1},
we obtain
\begin{theorem}\label{thm4}
The cohomology algebras of the ordered and unordered configuration
spaces of the infinite complex projective space are given by
$$ \begin{array}{lll}
   H^*(F(\mathbb{C}P^{\infty},n)) & \cong & \mathbb{Q}[x_1,x_2,\ldots,x_n],\\
   H^*(C(\mathbb{C}P^{\infty},n)) & \cong &
   \mathbb{Q}[\s_1,\s_2,\ldots,\s_n],
\end{array} $$
where $|x_i|=2$ and $|\s_i|=2i$.
\end{theorem}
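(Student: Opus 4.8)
The plan is to deduce both presentations from the homotopy equivalence $F(\mathbb{C}P^{\infty},n)\simeq(\mathbb{C}P^{\infty})^n$ of Theorem~\ref{theorem5.1}, and then to compute the cohomology of $(\mathbb{C}P^{\infty})^n$ by the K\"unneth formula and to pass to the unordered quotient by the invariant theory of $\mathcal{S}_n$. For the equivalence itself, write $\mathbb{C}P^{\infty}=\bigcup_m\mathbb{C}P^m$, so that $F(\mathbb{C}P^{\infty},n)=\bigcup_mF(\mathbb{C}P^m,n)$ and $(\mathbb{C}P^{\infty})^n=\bigcup_m(\mathbb{C}P^m)^n$ are increasing unions compatible with the open inclusions $\iota_m\colon F(\mathbb{C}P^m,n)\hookrightarrow(\mathbb{C}P^m)^n$. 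Since compact sets in a CW complex sit in a finite stage, every map of $S^k$, or of $S^k\times I$, into either union factors through a finite stage, so homotopy groups of the unions are the colimits over $m$ of the homotopy groups of the stages. The complement of $F(\mathbb{C}P^m,n)$ in $(\mathbb{C}P^m)^n$ is the fat diagonal $\bigcup_{i<j}\{x_i=x_j\}$, a finite union of copies of $(\mathbb{C}P^m)^{n-1}$ of real codimension $2m$, so a transversality argument shows $\iota_m$ is at least $(2m-2)$-connected; letting $m\to\infty$ the connectivity tends to infinity, whence $F(\mathbb{C}P^{\infty},n)\hookrightarrow(\mathbb{C}P^{\infty})^n$ is a weak homotopy equivalence --- already enough for the cohomological statements below, and refined to a genuine homotopy equivalence in Theorem~\ref{theorem5.1}.

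For the ordered configuration space, applying rational cohomology and the finite-type K\"unneth theorem gives
$$ H^*(F(\mathbb{C}P^{\infty},n))\ \cong\ H^*((\mathbb{C}P^{\infty})^n)\ \cong\ H^*(\mathbb{C}P^{\infty})^{\o n}\ \cong\ \mathbb{Q}[x_1]\o\cdots\o\mathbb{Q}[x_n]\ \cong\ \mathbb{Q}[x_1,\ldots,x_n], $$
with $|x_i|=2$, since $H^*(\mathbb{C}P^{\infty};\mathbb{Q})=\mathbb{Q}[x]$ with $|x|=2$. As a consistency check, by Theorem~\ref{thm1} one has (coefficientwise) $\lim_{m\to\infty}P_{F(\mathbb{C}P^m,3)}(t,s)=(1-t^2)^{-3}$, which is exactly the two variable Poincar\'e series of $\mathbb{Q}[x_1,x_2,x_3]$, concentrated in exterior degree $0$ --- reflecting the fact that the \Kz generators $G_{ij}$, of degree $2m-1$, escape to infinity.

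For the unordered configuration space, observe that the inclusion of the previous paragraph is $\mathcal{S}_n$-equivariant for the coordinate-permutation actions, so the induced isomorphism on cohomology carries the $\mathcal{S}_n$-action on $H^*(F(\mathbb{C}P^{\infty},n))$ to the action of $\mathcal{S}_n$ on $\mathbb{Q}[x_1,\ldots,x_n]$ permuting the $x_i$ (with no Koszul signs, the $x_i$ being of even degree). As recalled in the introduction, with $\mathbb{Q}$ coefficients $H^*(C(\mathbb{C}P^{\infty},n))$ is the $\mathcal{S}_n$-invariant part of $H^*(F(\mathbb{C}P^{\infty},n))$, hence $H^*(C(\mathbb{C}P^{\infty},n))\cong\mathbb{Q}[x_1,\ldots,x_n]^{\mathcal{S}_n}$. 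By the fundamental theorem of symmetric polynomials this invariant subring is the polynomial algebra $\mathbb{Q}[\s_1,\ldots,\s_n]$ on the elementary symmetric functions $\s_i=\s_i(x_1,\ldots,x_n)$, with $|\s_i|=2i$, which is the claimed presentation.

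The only real obstacle is Theorem~\ref{theorem5.1}: the connectivity estimate for $\iota_m$ and the bookkeeping for the passage to the infinite-dimensional colimit (for the full homotopy equivalence one also needs that $F(\mathbb{C}P^{\infty},n)$ has the homotopy type of a CW complex). Once that is in hand, the ordered case is K\"unneth and the unordered case is averaging/transfer together with the classical description of symmetric polynomials --- both routine.
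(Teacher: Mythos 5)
Your proposal is correct, and the deduction of the theorem itself (K\"unneth for the ordered case, then rational transfer plus the fundamental theorem of symmetric polynomials for the unordered case) is exactly what the paper does via Corollary \ref{cor5.2}. Where you genuinely diverge is in the key input, Theorem \ref{theorem5.1}: the paper constructs an explicit homotopy inverse $f\colon(\mathbb{C}P^{\infty})^n\to F(\mathbb{C}P^{\infty},n)$ by interleaving homogeneous coordinates so that the $n$ output points have disjoint supports, and then joins $\mathrm{id}$ to $\iota\circ f$ by a linear homotopy (checking continuity on finite skeleta); you instead observe that the fat diagonal in $(\mathbb{C}P^m)^n$ has real codimension $2m$, so $\iota_m$ is highly connected, and pass to the colimit over $m$ to conclude that $\iota$ is a weak equivalence. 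Your argument is softer and more general --- it applies verbatim to any increasing union of manifolds in which the discriminant locus has codimension tending to infinity, and it suffices for the cohomological statement --- but the paper's explicit map buys more: a genuine homotopy equivalence without appeal to CW models, an integral-coefficient statement (noted after the proof of Theorem \ref{thm4}), and, since the interleaved points $Q_0,\ldots,Q_{n-1}$ are automatically in general position, the identification $F_{nc}(\mathbb{C}P^{\infty},3)\simeq(\mathbb{C}P^{\infty})^3$ used in Section 6, which a pure connectivity count does not immediately give (the non-collinearity locus is only codimension $2(m-1)$ there, so one would have to redo the estimate). Both routes are sound; just make sure, in your colimit step, to justify that compact sets in $F(\mathbb{C}P^{\infty},n)$ lie in a finite stage $F(\mathbb{C}P^m,n)$, which follows from the corresponding fact for the CW complex $(\mathbb{C}P^{\infty})^n$ by intersecting with the open subset of distinct configurations, as you indicate.
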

\noindent and the corollary
\begin{cor}\label{cor2}
The inclusion $\mathbb{C}P^m\hookrightarrow\mathbb{C}P^{\infty}$ induces an isomorphism
$$ H^k(F(\mathbb{C}P^{\infty},n))\cong H^k(F(\mathbb{C}P^m,n)) $$
for $0\leq k\leq 2m-1$.
\end{cor}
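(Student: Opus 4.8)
The plan is to compare both cohomology rings with $H^*((\mathbb{C}P^m)^n)$ and to reduce the statement to a low-degree inspection of the Kri\v{z} model $E_*^*(\mathbb{C}P^m,n)$. By Theorem \ref{theorem5.1} the composite of the inclusion $F(\mathbb{C}P^m,n)\hookrightarrow F(\mathbb{C}P^\infty,n)$ with the homotopy equivalence $F(\mathbb{C}P^\infty,n)\simeq(\mathbb{C}P^\infty)^n$ is the standard inclusion $j\colon F(\mathbb{C}P^m,n)\hookrightarrow(\mathbb{C}P^m)^n\hookrightarrow(\mathbb{C}P^\infty)^n$; so, using Theorem \ref{thm4}, the map under consideration is $j^*\colon\mathbb{Q}[x_1,\dots,x_n]\to H^*(F(\mathbb{C}P^m,n))$. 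It factors through the quotient $\mathbb{Q}[x_1,\dots,x_n]\twoheadrightarrow\mathbb{Q}[x_1,\dots,x_n]/(x_1^{m+1},\dots,x_n^{m+1})=H^*((\mathbb{C}P^m)^n)$, which is an isomorphism in degrees $\le 2m+1$ since its kernel starts in degree $2m+2$; hence it suffices to show that the restriction $\iota^*\colon H^k((\mathbb{C}P^m)^n)\to H^k(F(\mathbb{C}P^m,n))$ induced by $F(\mathbb{C}P^m,n)\hookrightarrow(\mathbb{C}P^m)^n$ is an isomorphism for $0\le k\le 2m-1$.

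Here I would use that $H^*((\mathbb{C}P^m)^n)$ is exactly the exterior-degree-zero part $E_0^*(\mathbb{C}P^m,n)$ of the Kri\v{z} model, that it carries the zero differential, and that $\iota^*$ is induced by the inclusion of this sub-differential-algebra; since the exterior generators $G_{ij}$ have degree $2m-1$, every canonical basis element of positive exterior degree has total degree $\ge 2m-1$. Consequently, for $k\le 2m-2$ one has $E^k=E_0^k$ and $E^{k-1}=E_0^{k-1}$ with $d=0$ on both, so $H^k(F(\mathbb{C}P^m,n))=E_0^k=H^k((\mathbb{C}P^m)^n)$ and $\iota^*$ is an isomorphism there. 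In the single remaining degree $k=2m-1$, which is odd, $E_0^{2m-1}=0$ because $H^*(\mathbb{C}P^m)$ is concentrated in even degrees, $E_q^{2m-1}=0$ for $q\ge2$ because $q(2m-1)>2m-1$, and $E_1^{2m-1}=\bigoplus_{i<j}\mathbb{Q}\,G_{ij}$; thus $H^{2m-1}(F(\mathbb{C}P^m,n))$ equals the kernel of $d\colon E_1^{2m-1}\to E_0^{2m}$, $G_{ij}\mapsto p_{ij}^*(\Delta)=\sum_{a=0}^m x_i^a x_j^{m-a}$.

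The only genuine computation is that this last map is injective, i.e.\ that the classes $\{\sum_{a=0}^m x_i^a x_j^{m-a}\}_{i<j}$ are linearly independent in $\big(\mathbb{Q}[x_1,\dots,x_n]/(x_1^{m+1},\dots,x_n^{m+1})\big)^{2m}$, which is clear since the monomials $x_i^a x_j^{m-a}$ ($i<j$, $0\le a\le m$) are distinct elements of the standard monomial basis. Hence $H^{2m-1}(F(\mathbb{C}P^m,n))=0$; as $H^{2m-1}((\mathbb{C}P^m)^n)=0$ as well, $\iota^*$ is trivially an isomorphism in degree $2m-1$, and combining this with the lower degrees and with the factorization of $j^*$ through the quotient map gives the statement. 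I expect the main points to watch to be the identification of the inclusion-induced map with the sub-algebra inclusion of the Kri\v{z} model, and the fact that $\mathbb{C}P^\infty$ is not a projective variety, both of which are dealt with via Theorem \ref{theorem5.1}. As an alternative one can bypass the model entirely: $(\mathbb{C}P^m)^n\smallsetminus F(\mathbb{C}P^m,n)=\bigcup_{i<j}\{x_i=x_j\}$ has Borel--Moore dimension $2m(n-1)$, so Alexander--Lefschetz duality gives $H^k((\mathbb{C}P^m)^n,F(\mathbb{C}P^m,n))=0$ for $k<2m$, and the same linear-independence statement shows that the connecting map $H^{2m}((\mathbb{C}P^m)^n,F(\mathbb{C}P^m,n))\to H^{2m}((\mathbb{C}P^m)^n)$ is injective, which pins down the degree $2m-1$ case as well.
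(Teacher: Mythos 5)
Your proof is correct and takes essentially the same route as the paper: reduce via Theorem \ref{theorem5.1} to the restriction map $H^*((\mathbb{C}P^\infty)^n)\to H^*(F(\mathbb{C}P^m,n))$, then note that in total degree $\le 2m-1$ the Kri\v{z} model has no exterior generators below degree $2m-1$ and no coboundaries, so $H^k\cong E_0^k\cong H^k((\mathbb{C}P^m)^n)\cong H^k((\mathbb{C}P^\infty)^n)$; you are in fact more explicit than the paper about the boundary case $k=2m-1$, which the paper's proof leaves implicit. One small correction in that step: the monomials $x_i^ax_j^{m-a}$ are \emph{not} all distinct (the pure power $x_i^m$ occurs in $p_{ij}^*(\Delta)$ for every $j>i$ and in $p_{ki}^*(\Delta)$ for every $k<i$), but for $m\ge2$ (the paper's standing hypothesis) the mixed monomial $x_i^{m-1}x_j$ occurs in $p_{ij}^*(\Delta)$ alone, which does give the linear independence; note that for $m=1$ the independence, and with it the statement, genuinely fails for $n\ge4$ (the kernel of $d$ on $E_1^1$ is then nonzero, matching $H^1(F(\mathbb{C}P^1,4))\cong\mathbb{Q}^2$).
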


In Section 6 we compute the Betti numbers of the spaces of configurations of three collinear points
and configurations of three non-collinear points (see \cite{BP}
for the fundamental groups of these spaces):
$$ \begin{array}{lll}
     F_c(\mathbb{C}P^m,3) & = & \{(p,q,r)\in F(\mathbb{C}P^m,3)\mid p,q,r \mbox{ are
     collinear}\},\\
     F_{nc}(\mathbb{C}P^m,3) & = & F(\mathbb{C}P^m,3)\setminus
     F_c(\mathbb{C}P^m,3),\\
\end{array} $$
\begin{theorem}\label{thm5}
The Poincar\'{e} series of the space of configurations of three collinear points in $\mathbb{C}P^m$ is given by:
$$P_{F_c(\mathbb{C}P^m,3)}(t)=(1+t^{2m+1})C_{m-1}(t)=\frac{(1+t^{2m+1})(1-t^{2m})}{1-t^2},$$
in the finite case. If $m$ is infinite we have
$$P_{F_c(\mathbb{C}P^{\infty},3)}(t)=\frac{1}{1-t^2}.$$
\end{theorem}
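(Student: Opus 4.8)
The plan is to realize $F_c:=F_c(\mathbb{C}P^m,3)$ as a circle bundle over $B:=F(\mathbb{C}P^m,2)$ and to run the Gysin sequence. The only input needed is the cohomology of $B$: by \cite{So} (equivalently, by applying the \Kz model of Section~2 to $n=2$) one has $H^*(B;\mathbb{Q})\cong\mathbb{Q}[x_1,x_2]/(x_1^{m+1},r_m(x_1,x_2))$ with $|x_i|=2$, concentrated in even degrees, with Poincar\'e polynomial $C_m(t)C_{m-1}(t)$ (it is a complete intersection, since $x_2^{m+1}=x_1^{m+1}-(x_1-x_2)r_m(x_1,x_2)$). Forgetting the third point gives $\pi\colon F_c\to B$, $(p,q,r)\mapsto(p,q)$, whose fibre over $(p,q)$ is $\ell_{pq}\setminus\{p,q\}$. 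Writing $\hat p,\hat q\subset\mathbb{C}^{m+1}$ for the lines through $p\neq q$, we have $\ell_{pq}=\mathbb{P}(\hat p\oplus\hat q)$, and a point of $\ell_{pq}$ other than $p,q$ is, as a line in $\hat p\oplus\hat q$, exactly the graph of a nonzero homomorphism $\hat p\to\hat q$. Hence, with $L_i=p_i^{*}\mathcal{O}_{\mathbb{C}P^m}(-1)$ the pullbacks to $B$ of the tautological line bundle, $\pi$ identifies $F_c$ with the complement of the zero section in the total space of $L_1^{\vee}\otimes L_2$; checking that this identification is a genuine (smooth) bundle map is the one geometric step. Consequently $F_c$ is homotopy equivalent to the circle bundle of $L_1^{\vee}\otimes L_2$, with Euler class
$$e=c_1(L_1^{\vee}\otimes L_2)=-c_1(L_1)+c_1(L_2)=x_1-x_2\in H^2(B),$$
where $x_i=p_i^{*}(h)$ and $h$ is the hyperplane class (the sign of $e$ is immaterial below).

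Put $A=H^*(B;\mathbb{Q})$, so $P_A(t)=C_m(t)C_{m-1}(t)$. The Gysin sequence of the circle bundle,
$$\cdots\to A^{k-2}\xrightarrow{\ \cdot e\ }A^k\to H^k(F_c)\to A^{k-1}\xrightarrow{\ \cdot e\ }A^{k+1}\to\cdots,$$
splits over $\mathbb{Q}$, so $H^k(F_c)\cong (A/eA)^k\oplus\mathrm{Ann}_A(e)^{k-1}$, whence
$$P_{F_c}(t)=P_{A/eA}(t)+t\,P_{\mathrm{Ann}_A(e)}(t).$$
Both summands are elementary. Setting $x_1=x_2=x$ in the presentation of $A$ gives $A/eA\cong\mathbb{Q}[x]/(x^{m+1},r_m(x,x))=\mathbb{Q}[x]/(x^{m+1},(m+1)x^m)=\mathbb{Q}[x]/(x^m)$, so $P_{A/eA}(t)=C_{m-1}(t)$. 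For the annihilator, the exact sequence of graded vector spaces
$$0\to\mathrm{Ann}_A(e)\to A\xrightarrow{\ \cdot e\ }A\to A/eA\to 0,$$
in which $\cdot e$ raises degree by $2$, gives, on counting dimensions degree by degree,
$$t^{2}P_{\mathrm{Ann}_A(e)}(t)=(t^{2}-1)P_A(t)+P_{A/eA}(t)=(t^{2}-1)C_m(t)C_{m-1}(t)+C_{m-1}(t)=t^{2m+2}C_{m-1}(t),$$
since $(t^{2}-1)C_m(t)=t^{2m+2}-1$; hence $P_{\mathrm{Ann}_A(e)}(t)=t^{2m}C_{m-1}(t)$.

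Substituting these two values,
$$P_{F_c}(t)=C_{m-1}(t)+t\cdot t^{2m}C_{m-1}(t)=(1+t^{2m+1})C_{m-1}(t)=\frac{(1+t^{2m+1})(1-t^{2m})}{1-t^{2}},$$
which is the finite case. For $m=\infty$ the same $\pi$ realizes $F_c(\mathbb{C}P^{\infty},3)$ as the circle bundle over $F(\mathbb{C}P^{\infty},2)\simeq(\mathbb{C}P^{\infty})^2$ (Theorem~\ref{theorem5.1}) with Euler class $x_1-x_2$; as $x_1-x_2$ is a non-zero-divisor in $\mathbb{Q}[x_1,x_2]$, the annihilator vanishes and $H^*(F_c(\mathbb{C}P^{\infty},3))\cong\mathbb{Q}[x_1,x_2]/(x_1-x_2)\cong\mathbb{Q}[x]$ with $|x|=2$, whence the Poincar\'e series $1/(1-t^{2})$ — consistently the $m\to\infty$ limit of the finite formula, with the term $t^{2m+1}$ escaping to infinity, in the same stable range as Corollary~\ref{cor2}. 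Thus the only real obstacle is the geometric identification in the first paragraph (the line-bundle structure of $\pi$ and its Euler class); granting it, the computation is purely formal.
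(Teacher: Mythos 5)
Your argument is correct, and it replaces the paper's key step with a genuinely different one. The paper uses the same projection $\pi\colon F_c(\mathbb{C}P^m,3)\to F(\mathbb{C}P^m,2)$ with fibre $\overset{\circ\circ}{\mathbb{C}P}{}^1\simeq S^1$, but it does not identify the $d_2$ differential of the Serre spectral sequence directly: instead it invokes the second fibration $F(\mathbb{C}P^1,3)\hookrightarrow F_c\to Gr_1(\mathbb{C}P^m)$ to conclude $\b_1(F_c)=0$, hence $d_2(x)=\l a_1+\mu a_2$ with $(\l,\mu)\neq(0,0)$, and then runs a triangular-block rank argument to get injectivity of $\cdot e$ up to the middle and surjectivity beyond it. You instead pin down the fibration geometrically as the complement of the zero section of $\mathrm{Hom}(L_1,L_2)$, so the Euler class is exactly $x_1-x_2$, and the Gysin sequence reduces everything to the computation of $A/eA$ and $\mathrm{Ann}_A(e)$ in $A=\mathbb{Q}[x_1,x_2]/(x_1^{m+1},r_m(x_1,x_2))$; your complete-intersection bookkeeping and the identity $(t^2-1)C_m(t)=t^{2m+2}-1$ are all correct. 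What your route buys: no auxiliary fibration over the Grassmannian, an exact (not just nonzero) differential, a uniform treatment of the finite and infinite cases (where $e$ becomes a non-zero-divisor in $\mathbb{Q}[x_1,x_2]$, recovering Proposition \ref{prop6.3}), and an additive decomposition $H^*(F_c)\cong A/eA\oplus \mathrm{Ann}_A(e)[-1]$ that comes closer to the ring structure than the paper's Betti-number count. What it costs is the one geometric verification you flag — that $r\mapsto(\text{graph of }\hat p\to\hat q)$ is a bundle isomorphism over $B$ — which is an algebraic, hence smooth, fibrewise-linear identification and is standard; in the infinite-dimensional case it also spares you the paper's explicit check that the maps are Serre fibrations, since a line bundle minus its zero section is automatically one.
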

\begin{theorem}\label{thm6}
The Poincar\'{e} series of the ordered configuration space
of three non-collinear points in complex projective space is given as:
$$P_{F_{nc}(\mathbb{C}P^m,3)}(t)=C_{m-2}(t)\cdot C_{m-1}(t)\cdot C_m(t)=\frac{(1-t^{2m-2})(1-t^{2m})(1-t^{2m+2})}{(1-t^2)^3},$$
in the finite case. If $m$ is infinite we have
$$P_{F_{nc}(\mathbb{C}P^\infty,3)}(t)=\frac{1}{(1-t^2)^3}.$$
\end{theorem}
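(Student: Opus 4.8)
The plan is to compute the Poincar\'e series by running two Fadell--Neuwirth type fibrations and showing that both of their Serre spectral sequences collapse for parity reasons. First, forgetting the third point gives $\pi\colon F_{nc}(\mathbb{C}P^m,3)\to F(\mathbb{C}P^m,2)$, $(p,q,r)\mapsto(p,q)$; a triple $(p,q,r)$ is non-collinear precisely when $r$ misses the unique line $\overline{pq}$, so the fibre over $(p,q)$ is $\mathbb{C}P^m\setminus\overline{pq}\cong\mathbb{C}P^m\setminus\mathbb{C}P^1$, and $\pi$ is a locally trivial bundle because it is the pull-back of $\{(\ell,r)\in Gr_2(\mathbb{C}^{m+1})\times\mathbb{C}P^m\mid r\notin\ell\}\to Gr_2(\mathbb{C}^{m+1})$ along the regular map $(p,q)\mapsto\overline{pq}$. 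Second, I would use the honest Fadell--Neuwirth bundle $F(\mathbb{C}P^m,2)\to\mathbb{C}P^m$, $(p,q)\mapsto q$, with fibre $\mathbb{C}P^m\setminus\{\mathrm{pt}\}$.

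Next I would pin down the fibres up to homotopy. Splitting $\mathbb{C}^{m+1}=W\oplus W'$ with $\dim W=2$, the projection $[w+w']\mapsto[w']$ exhibits $\mathbb{C}P^m\setminus\mathbb{P}(W)$ as an affine $\mathbb{C}^2$-bundle over $\mathbb{P}(W')\cong\mathbb{C}P^{m-2}$, so $\mathbb{C}P^m\setminus\mathbb{C}P^1\simeq\mathbb{C}P^{m-2}$; likewise $\mathbb{C}P^m\setminus\{\mathrm{pt}\}\simeq\mathbb{C}P^{m-1}$. Thus both fibrations have base and fibre with rational cohomology concentrated in even degrees, and --- since $\mathbb{C}P^{m-1}$ is simply connected for $m\ge2$ --- the base $F(\mathbb{C}P^m,2)$ of $\pi$ is simply connected too, by the homotopy exact sequence of $F(\mathbb{C}P^m,2)\to\mathbb{C}P^m$.

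Now the key observation: in each of the two Serre spectral sequences the $E_2$ page is $H^*(\mathrm{base})\otimes H^*(\mathrm{fibre})$ (no local-coefficient problem, by simple connectivity of the base) and is non-zero only in even total degree, while every differential $d_r$ raises total degree by one; hence all differentials vanish and both spectral sequences collapse at $E_2$. Applying this to $F(\mathbb{C}P^m,2)\to\mathbb{C}P^m$ gives $P_{F(\mathbb{C}P^m,2)}(t)=C_m(t)C_{m-1}(t)$, and then applying it to $\pi$ gives
$$P_{F_{nc}(\mathbb{C}P^m,3)}(t)=P_{F(\mathbb{C}P^m,2)}(t)\cdot P_{\mathbb{C}P^{m-2}}(t)=C_{m-2}(t)\,C_{m-1}(t)\,C_m(t);$$
substituting $C_k(t)=(1-t^{2k+2})/(1-t^2)$ yields the closed form. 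For $m=\infty$ the same bundles have fibres $\mathbb{C}P^\infty\setminus\mathbb{C}P^1\simeq\mathbb{C}P^\infty$ and $\mathbb{C}P^\infty\setminus\{\mathrm{pt}\}\simeq\mathbb{C}P^\infty$, each with Poincar\'e series $(1-t^2)^{-1}$, so the product is $(1-t^2)^{-3}$ (equivalently, pass to the limit $m\to\infty$, or use $H^*(F(\mathbb{C}P^\infty,2))\cong\mathbb{Q}[x_1,x_2]$ from Theorem \ref{thm4}).

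The only step that needs real care is the first: checking that $\pi$ is genuinely a fibre bundle, i.e. that the family of lines $\overline{pq}$ is locally trivial over $F(\mathbb{C}P^m,2)$ and hence the fibre $\mathbb{C}P^m\setminus\overline{pq}$ can be trivialised locally; everything after that is the automatic parity collapse. This should be contrasted with the collinear case, where the analogous bundle $F_c(\mathbb{C}P^m,3)\to Gr_2(\mathbb{C}^{m+1})$ has fibre $F(\mathbb{C}P^1,3)\simeq PGL_2(\mathbb{C})$, whose cohomology has an odd class, so its spectral sequence genuinely has a non-trivial differential and Theorem \ref{thm5} requires more than a parity argument.
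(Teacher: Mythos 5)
Your proof is correct and follows essentially the same route as the paper: the fibration $\mathbb{C}P^m\setminus\mathbb{C}P^1\hookrightarrow F_{nc}(\mathbb{C}P^m,3)\to F(\mathbb{C}P^m,2)$ together with the even-degree collapse of its Serre spectral sequence. The only (cosmetic) differences are that you identify the fibre with $\mathbb{C}P^{m-2}$ via an affine bundle where the paper uses Poincar\'e--Lefschetz duality, and that for $m=\infty$ the paper instead observes directly that the retraction of Theorem \ref{theorem5.1} lands in non-collinear triples, giving $F_{nc}(\mathbb{C}P^{\infty},3)\simeq(\mathbb{C}P^{\infty})^3$.
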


\section{The $\mc{S}_3$-decomposition of $E_*^*(\mathbb{C}P^m,3)$}
\label{section2}

The equation $a+b+c=k$ ($a,b,c$ are non-negative integers) has
three types of solutions: the first type when $a,b,c$ are pairwise
distinct, the second type when the set $\{a,b,c\}$ has two
elements, and the third type when $a=b=c$ (if $3\mid k$). The symmetric group
$\mc{S}_3$ acts on the set of triplets $(a,b,c)$ and linearizing
this action we find the next $\mc{S}_3$-modules corresponding to
the three types:
$$\begin{array}{lll}
  \mathbb{Q}[\mc{S}_3]\lan a,b,c\ran & \cong & V(3)\oplus 2V(2,1)\oplus V(1,1,1), \\
  \mathbb{Q}[\mc{S}_3]\lan a,a,b\ran & \cong & V(3)\oplus V(2,1), \\
  \mathbb{Q}[\mc{S}_3]\lan a,a,a\ran & \cong & V(3).
\end{array}
$$
We denote by $P_3(k)$ the cardinality of partitions of $k$ into
three non-negative parts
$$ P_3(k)=\mathrm{card}\{(a,b,c)\mid  a\geq b\geq c\geq0, a+b+c=k\} $$
and also by $P_{3,\leq m}(k)$ the cardinality of bounded
partitions
$$P_{3,\leq m}(k)=\mathrm{card}\{(a,b,c)\mid m\geq a\geq b\geq c\geq 0, a+b+c=k\}.$$
See the Appendix for the computation of these numbers and some of
their elementary properties.

In the next statement we will use the notation $r_3(k)=1$ if $3|k$ and 0
otherwise; the component $E_q^k(\mathbb{C}P^m,3)$ is denoted shortly $E_q^k$. From
now on, $H^*$ will denote the cohomology algebra
$H^*(\mathbb{C}P^m;\mathbb{Q})\cong\mathbb{Q}[x]\diagup\lan x^{m+1}\ran$, where $|x|=2$; the
canonical basis is $\{ 1,x,\ldots,x^m\}$ and $\Delta=x^m\o 1+x^{m-1}\o x+\ldots +1\o x^m $.
\begin{prop}\label{lem2.1}
The non-zero bigraded components $E_q^k$ are decomposed into
irreducible $\mc{S}_3$-modules with multiplicities from the following table:

\begin{center}\begin{tabular}{|c|c|c|c|}
  \hline
                                       & $V(3)$ & $V(2,1)$ & $V(1,1,1)$ \\ \hline
  $\begin{array}{cl}
  E_2^{2(2m-1)+2k} \cong E_2^{2(3m-1)-2k} & (0\leq 2k\leq m) \\
  E_1^{(2m-1)+2k}  \cong E_1^{(6m-1)-2k}  & (0\leq k\leq m)  \\
  \quad\,\,\,\,\, E_0^{2k}         \cong E_0^{6m-2k}      & (0\leq 2k\leq3m)
  \end{array}$
& $\begin{array}{c}
      -       \\
      k+1     \\
      \mu_3(k)
\end{array}$
& $\begin{array}{c}
      1       \\
      k+1     \\
      \mu_{2,1}(k)
\end{array}$
& $\begin{array}{c}
      -       \\
      -       \\
      \mu_{1,1,1}(k)
\end{array}$\\  \hline
\end{tabular}
\end{center}
where
$$ \begin{array}{lll}
  \mu_3(k)       & = & P_{3,\leq m}(k), \\
  \mu_{2,1}(k)   & = & 2P_{3,\leq m}(k)-(\lfloor\frac{k}{2}\rfloor+1+r_3(k)-max\{0,\lceil\frac{k-m}{2}\rceil\}) \\
                 & = & P_{3,\leq m}(k)+P_{3,\leq m-2}(k-3)-r_3(k),   \\
  \mu_{1,1,1}(k) & = & P_{3,\leq m}(k)-(\lfloor\frac{k}{2}\rfloor+1-max\{0,\lceil\frac{k-m}{2}\rceil\})\\
                 & = & P_{3,\leq m-2}(k-3).
\end{array}
$$
\end{prop}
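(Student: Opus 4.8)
The plan is to compute the $\mathcal{S}_3$-character of each nonzero bigraded component $E_q^k$ directly from the canonical basis described in the introduction, and then match it against the three building-block modules $\mathbb{Q}[\mathcal{S}_3]\langle a,b,c\rangle$, $\mathbb{Q}[\mathcal{S}_3]\langle a,a,b\rangle$, $\mathbb{Q}[\mathcal{S}_3]\langle a,a,a\rangle$ computed just above. First I would treat the exterior degree $q=0$: here $E_0^k$ is just the degree-$k$ part of $H^*\otimes H^*\otimes H^*$, which has basis $x^{a}\otimes x^{b}\otimes x^{c}$ with $0\le a,b,c\le m$ and $a+b+c=k$, on which $\mathcal{S}_3$ permutes the three tensor factors. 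Sorting these monomials by the multiset $\{a,b,c\}$ partitions the basis into $\mathcal{S}_3$-orbits of the three types, so $E_0^k$ decomposes as $N_1\cdot(V(3)\oplus 2V(2,1)\oplus V(1,1,1))\oplus N_2\cdot(V(3)\oplus V(2,1))\oplus N_3\cdot V(3)$, where $N_1,N_2,N_3$ count bounded partitions of $k$ of the respective types. Writing $P_{3,\le m}(k)=N_1+N_2+N_3$ and identifying $N_2+2N_3 = \lfloor k/2\rfloor+1+r_3(k)-\max\{0,\lceil (k-m)/2\rceil\}$ (the number of bounded partitions with a repeated part, a short elementary count that I would relegate to the Appendix) gives exactly $\mu_3(k)=P_{3,\le m}(k)$ for $V(3)$, $\mu_{2,1}(k)=2N_1+N_2$, and $\mu_{1,1,1}(k)=N_1$ for the remaining multiplicities; the stated alternative formulas in terms of $P_{3,\le m-2}(k-3)$ then follow by the partition identities from the Appendix.

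Next I would handle $q=1$. A canonical basis element is $p_i^*(x^a)p_j^*(x^b)G_{12}$-type, i.e. $G_{i_1 j_1}$ with $i_1<j_1$ and scalars $x_1\otimes x_2\otimes x_3$ with $x_j=1$ for $j\in J_*$; using the relation $p_j^*(x)G_{ij}=p_i^*(x)G_{ij}$ one can normalize so the whole ``diagonal'' weight sits on the non-$G$ index. The upshot is that a basis of $E_1^{(2m-1)+2k}$ is indexed by an unordered pair $\{i,j\}\subset\{1,2,3\}$ together with a power $x^k$ on the third index (with $0\le k\le m$), giving $\mathcal{S}_3$ acting on $3$ copies of $H^*$, i.e. $E_1^*\cong H^*\otimes\mathbb{Q}[\mathcal{S}_3/\mathcal{S}_2]$ as graded $\mathcal{S}_3$-modules; since $\mathbb{Q}[\mathcal{S}_3/\mathcal{S}_2]\cong V(3)\oplus V(2,1)$, the degree-$((2m-1)+2k)$ part contributes multiplicity equal to $\dim H^{2k}=1$ copies in a single ``column,'' but summing over the admissible $k$ yields the cumulative multiplicities $k+1$ for both $V(3)$ and $V(2,1)$ and $0$ for $V(1,1,1)$, matching the table. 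The symmetry $E_1^{(2m-1)+2k}\cong E_1^{(6m-1)-2k}$ comes from Poincaré duality on $H^*$ (sending $x^k\mapsto x^{m-k}$), which is $\mathcal{S}_3$-equivariant.

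Finally, for $q=2$ the canonical basis consists of $G_{i_1 j_1}G_{i_2 j_2}$ with $j_1<j_2$; modulo the Arnold-type relation $G_{ik}G_{jk}=G_{ij}G_{jk}-G_{ij}G_{ik}$ one checks that on three points the span of all $q=2$ monomials is a single $\mathcal{S}_3$-orbit isomorphic (after tensoring with the one power of $x$ that can be carried) to $V(2,1)$ in each admissible degree, whence $E_2^*\cong H^{*}\otimes V(2,1)$ with the degree shift $2(2m-1)$, giving multiplicity $1$ for $V(2,1)$, $0$ for $V(3)$ and $V(1,1,1)$, and the duality $E_2^{2(2m-1)+2k}\cong E_2^{2(3m-1)-2k}$ again from Poincaré duality; here one uses that $G_{12}G_{23}$, $G_{13}G_{23}$, $G_{12}G_{13}$ satisfy one linear relation so the sign representation is killed and the alternating part of the standard permutation module on the three ``missing indices'' is what survives. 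The main obstacle is the $q=2$ computation: verifying that the exterior relations collapse the naive $\binom{3}{2}$-dimensional (times scalar) span precisely onto a copy of $V(2,1)$ — and checking that the permissible scalar powers of $x$ are correctly constrained by the relation $p_j^*(x)G_{ij}=p_i^*(x)G_{ij}$ — requires care, and the bookkeeping of the degree ranges $0\le 2k\le m$ versus $0\le k\le m$ in the three rows must be reconciled with the duality symmetry; everything else is a routine orbit count that I would push into the Appendix.
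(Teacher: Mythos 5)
Your $q=0$ and $q=2$ cases are essentially the paper's own argument (orbit count by multiset type for $E_0^{2k}$; for $E_2^*$ the two--dimensional quotient of the span of $G_{12}G_{13},G_{12}G_{23},G_{13}G_{23}$ by the Arnold relation, which kills the sign representation and leaves $V(2,1)$), and the resulting formulas for $\mu_3,\mu_{2,1},\mu_{1,1,1}$ are consistent. The genuine gap is in the $q=1$ case. The relation $p_j^*(x)G_{ij}=p_i^*(x)G_{ij}$ only identifies weights sitting on the two endpoints $i$ and $j$ of the edge; it cannot move weight onto the third index. So the claimed normalization ``the whole diagonal weight sits on the non-$G$ index'' is impossible: the correct normal form for a basis element of $E_1^{(2m-1)+2k}$ is an edge $\{i,j\}$ together with an ordered pair of weights $(x^{h'},x^{h''})$, one on the edge and one on the isolated vertex, with $h'+h''=k$. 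Hence $\dim E_1^{(2m-1)+2k}=3(k+1)$, not $3$, and $E_1^*$ is not $H^*\otimes\mathbb{Q}[\mathcal{S}_3/\mathcal{S}_2]$; rather, for each of the $k+1$ marks $(h',h'')$ the three marked graphs span a copy of $\mathbb{Q}[\mathcal{S}_3/\mathcal{S}_2]\cong V(3)\oplus V(2,1)$ (character $3,1,0$ on the identity, transpositions and $3$-cycles), which is exactly where the entry $k+1$ comes from.

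Your attempted repair --- ``summing over the admissible $k$ yields the cumulative multiplicities $k+1$'' --- does not work: the entry $k+1$ in the table is the multiplicity inside the \emph{single} bigraded component $E_1^{(2m-1)+2k}$, and components with different $k$ sit in different total degrees, so there is nothing to accumulate. As written, the $q=1$ row is not established; it is fixed by replacing the faulty normalization with the marked-graph description above. A minor further point: in the $q=0$ step the quantity $\lfloor k/2\rfloor+1-\max\{0,\lceil(k-m)/2\rceil\}$ counts $N_2+N_3$ (bounded partitions with a repeated part), so the identity you need is $N_2+2N_3=(N_2+N_3)+r_3(k)$; your arithmetic is consistent with this, but the parenthetical description of the left-hand side is off by $N_3$.
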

\begin{proof}
All the cohomology classes in $H^*$ are of even
degree, hence $E_q^k=0$ if $k+q\equiv1\pmod 2$. For large values
of $k$ we use the $\mc{S}_3$-equivariant Poincar\'{e} duality, see
\cite{AAB}.

0) The action of $\mc{S}_3$ on $E_0^{2k}$ is equivalent with the action on the set of non negative solutions
of the equation $a+b+c=k$, $a,b,c\leq m $ and the decompositions of the three types were given above. The
multiplicity $\mu_{1,1,1}(k)$ is given by the number of solutions of the equation
$$a+b+c=k,\,\,\,m\geq a>b>c\geq 0,$$
and this number equals the number $P_{3,\leq m}(k)$ minus the
number of solutions of the equation
$$2a+b=k,\,\,\,\,m\geq a,b\geq 0$$
and this gives the interval
$\max\{0,\lceil\frac{k-m}{2}\rceil\}\leq a\leq
\lfloor\frac{k}{2}\rfloor$, (here $k\leq \frac{3m}{2}$). The
second formula is a consequence of the bijection
$$ \begin{array}{ccc}
  \left\{(a,b,c)\Big|\begin{array}{l}  a+b+c=k,\\
                             m\geq a>b>c\geq 0
            \end{array}  \right\}    & \approx &
  \left\{(\a,\b,\c)\Big|\begin{array}{l} \a+\b+\c=k-3,\\
                               m-2\geq\a\geq\b\geq\c\geq 0
            \end{array}   \right\}     \\
  (a,b,c) & \leftrightarrow & (a-2,b-1,c).
\end{array} $$
The multiplicity $\mu_{2,1}(k)$ is given by the total number of
solutions of the first two types, $P_{3,\leq m}(k)-r_3(k)$, plus the
number of solutions of the first type, $\mu_{1,1,1}(k)$.

1) The elements of the canonical basis $E_1^{(2m-1)+2k}$ correspond to the marked graphs
$(\Gamma_{(2,1)},(x^{h'},x^{h''}))$ such that $h'+h''=k$, where
$0\leq k\leq m$ (see \cite{AAB} for a marked version of the
graphs introduced in \cite{LS}). The possible marks $(h',h'')$
are $(0,k),(1,k-1),\ldots,(k,0)$ and the corresponding marked
graphs are:

\begin{picture}(0,60)
\multiput(0,30)(30,0){3}{$\centerdot$}  \put(1,31){\line(1,0){30}}
\put(-2,22){$1$}       \put(28,22){$2$}          \put(58,22){$3$}
\put(15,35){$x^{h_1}$} \put(60,35){$x^{h_3}$}    \put(90,30){$,$}
\multiput(120,30)(30,0){3}{$\centerdot$}
\put(151,31){\line(1,0){30}}  \put(118,22){$1$}  \put(148,22){$2$}
\put(178,22){$3$}             \put(117,35){$x^{h_1}$}
\put(162,35){$x^{h_2}$}       \put(210,30){$,$}  \put(240,0){
\multiput(0,30)(30,0){3}{$\centerdot$} \put(1,31){\line(1,0){30}}
\put(-2,22){$1$} \put(28,22){$3$}      \put(58,22){$2$}
\put(15,35){$x^{h_1}$}                 \put(60,35){$x^{h_2}$} }
\end{picture}
The transposition $(12)$ preserves the first marked graph and the
3-cycles have no contribution to the trace, hence the character of
this representation gives the decomposition $V(3)\oplus V(2,1)$.

2) For $E_2^{2(2m-1)+2k}$ we have a unique type of associated marked graphs,
$(\Gamma_{(3)},x^{h_1})$, giving the irreducible module $V(2,1)$:

\begin{picture}(30,70)
\put(0,30){$(\Gamma_{(3)},x^{h_1}):$}
\multiput(90,30)(30,0){3}{$\centerdot$}
\multiput(91,31)(30,0){2}{\line(1,0){30}} \put(88,22){$1$}
\put(118,22){$2$} \put(148,22){$3$} \put(117,40){$x^{h_1}$}
\put(180,30){$,$} \put(120,0){
\multiput(90,30)(30,0){3}{$\centerdot$}
\multiput(91,31)(30,0){2}{\line(1,0){30}} \put(88,22){$2$}
\put(118,22){$1$} \put(148,22){$3$} \put(117,40){$x^{h_1}$} }
\end{picture}
\end{proof}

\section{Computing the differentials}
\label{section3}

For three points in $\mathbb{C}P^m$ we have the decomposition
$$ (E_*^*,d)=(E_*^*(3),d_3)\oplus (E_*^*(2,1),d_{2,1})\oplus(E_*^*(1,1,1),d_{1,1,1})$$
corresponding to the three partitions of 3. We analyze the
properties of the differential in each of these three components.
\begin{lem}\label{lem31}
For $0\leq k\leq m-1$ the differential $d_3$ is injective
$$ d:E_{1}^{(2m-1)+2k}(3)\rightarrowtail E_{0}^{2m+2k}(3). $$
\end{lem}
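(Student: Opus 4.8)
The plan is to work with the explicit canonical basis of the Kři\v{z} model in the partition-$(3)$ component and compute the differential directly. In exterior degree $1$, the component $E_1^{(2m-1)+2k}(3)$ is spanned by the symmetrizations of the marked graphs $(\Gamma_{(2,1)},(x^{h'},x^{h''}))$ with $h'+h''=k$ described in the proof of Proposition \ref{lem2.1}; by that proposition its dimension is $k+1$ for the $V(3)$-part, but since $V(3)$ does not occur there, only the trivial-isotypic multiplicity in the larger module $E_1^{(2m-1)+2k}$ matters, namely the single copy of the trivial representation $V(3)$ coming from symmetrizing $G_{12}\cdot(x^{h'}\otimes x^{h''})$-type classes. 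Concretely, a basis of $E_1^{(2m-1)+2k}(3)$ is obtained by fully symmetrizing a single exterior generator $G_{ij}$ multiplied by $x^{h'}$ and $x^{h''}$ on the two non-$j$ slots, for $0\le h'\le h''$, $h'+h''=k$; this yields $\lfloor k/2\rfloor+1$ symmetric classes, but after accounting for the relation $p_j^*(x)G_{ij}=p_i^*(x)G_{ij}$ the correct count is exactly $k+1$ minus the $V(2,1)$-reflections — in any case the dimension is read off from Proposition \ref{lem2.1}, so I will not recount it.

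The main computation: apply $d$ to such a symmetrized class. Since $d(G_{ij})=p_{ij}^*(\Delta)=\sum_{a+b=m}x^a\otimes x^b$ (in the two slots $i,j$), the image in $E_0^{2m+2k}(3)$ is a symmetric polynomial class of total degree $2m+2k$ in $H^*=\mathbb{Q}[x]/\langle x^{m+1}\rangle$ on three tensor factors. The target $E_0^{2m+2k}(3)$ is the trivial-isotypic part of $(H^*)^{\otimes 3}$ in degree $2m+2k$, i.e. spanned by the monomial symmetric functions $m_{(a,b,c)}$ with $m\ge a\ge b\ge c\ge 0$ and $a+b+c=m+k$, of which there are $P_{3,\le m}(m+k)=P_{3,\le m}(2m-k)$. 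For $k\le m-1$ one checks that $P_{3,\le m}(m+k)\ge k+1=\dim E_1^{(2m-1)+2k}(3)$, so injectivity is at least dimensionally possible; the substance is to show the images are linearly independent. I would set up the matrix of $d$ in the monomial bases: the class indexed by the mark $(h',h'')$ maps, up to symmetrization constants, to $\sum_{a+b=m}$ (symmetrization of $x^{a}\otimes x^{b}\otimes 1$ times $x^{h'}\otimes x^{h''}$-type terms distributed over the three slots) $=\sum_{a+b=m} m_{\mathrm{sort}(a+h'',\,b,\,h')}+\cdots$. The key structural observation is a triangularity: order the marks by $h'$ (the smaller exponent) increasingly; the term in $d$ of the $h'$-mark with the \emph{smallest} last coordinate equal to $h'$ does not appear in the image of any mark with strictly smaller $h'$, because those images only produce partitions whose minimal part is $\le h'-1$ or else of a controlled form. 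Making this precise — picking, for each $k$, a suitable monomial ``pivot'' $m_{(a,b,c)}$ attached to each mark and showing the resulting $(k+1)\times(k+1)$ submatrix is triangular with nonzero diagonal — is the crux.

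The hard part will be exactly this triangularity/pivot bookkeeping: the symmetrization collapses many terms of $p_{ij}^*(\Delta)$ together, and one must verify that the collisions never kill a pivot, which is where the hypothesis $k\le m-1$ (rather than $k\le m$) is used — when $k=m$ some extremal monomial hits the truncation $x^{m+1}=0$ and a pivot is lost, which is consistent with the differential failing to be injective in top exterior-$1$ degree. I would first handle small $k$ ($k=0,1$) by hand to locate the right pivots, then state the general pivot as $m_{(\,\cdot\,)}$ with the largest part as big as allowed, and finish by the triangular-matrix argument. An alternative, cleaner route that I would also consider: identify $E_*^*(3)$ with the diagonal/symmetric subcomplex and recognize this piece of $d$ as (a shift of) the De Concini–Procesi or Orlik–Solomon-type boundary map whose injectivity in this range is known; but the direct monomial computation above is self-contained and is what I would write up.
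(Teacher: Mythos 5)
Your overall strategy --- compute $d$ on an explicit basis of the trivial-isotypic part and exhibit a triangular structure with respect to a monomial order on $E_0^{2m+2k}(3)$ --- is exactly the strategy of the paper's proof, so the approach is sound in outline. But the proposal stops precisely at the step that constitutes the proof: you never name the pivots nor verify the triangularity, and you yourself flag this as ``the crux'' and ``the hard part.'' Moreover, the plan as sketched would hit a concrete obstruction. First, the basis is miscounted: $E_1^{(2m-1)+2k}(3)$ has dimension $k+1$, not $\lfloor k/2\rfloor+1$; the symmetrizations of canonical basis vectors are indexed by \emph{ordered} marks $(h',h'')$ with $h'+h''=k$, and the symmetrization attached to $(a,k-a)$ is a different vector from the one attached to $(k-a,a)$. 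Second, and more seriously, these two invariant classes have \emph{identical} projections onto the monomials carrying a factor $x^m$, so a single family of pivots of the form ``monomial symmetric function with largest part as big as allowed, one per unordered mark'' cannot separate them: any one-level triangularity argument indexed by marks collapses these pairs, and you would only prove injectivity on a subspace of roughly half the dimension.

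This is exactly why the paper's proof passes to the sum and difference $\sigma_a,\delta_a$ of the two symmetrizations, $\lceil k/2\rceil\le a\le k$, and uses a two-step filtration $\mathcal{D}_*\subset\mathcal{G}_*$ against a lexicographic filtration on the target: the $x^m\otimes\cdots$ terms cancel in $d(\delta_a)$, whose leading contribution then sits at the $x^{m-1}\otimes\cdots$ level, while $d(\sigma_a)$ has leading monomial $x^m\otimes x^a\otimes x^{k-a}$; the two families of pivots lie in different graded pieces and cannot interfere. Supplying this second family of pivots, and checking that the four surviving $x^{m-1}$-terms of $d(\delta_a)$ give independent classes as $a$ varies, is the missing content of your write-up. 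Your heuristic for where the hypothesis $k\le m-1$ enters is right in spirit --- at $k=m$ a pivot is indeed lost and the kernel becomes one-dimensional, cf.\ Lemma \ref{lem34} --- but that observation does not substitute for the verification.
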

\begin{proof}
We introduce filtrations on both $E_{1}^{(2m-1)+2k}(3)$ and $E_{0}^{2m+2k}(3)$. For a triple $\a=(a,b,c)$ with
$|\a|= a+b+c=m+k$ and $m\geq a\geq b\geq c\geq 0$ we denote by $v_{\a}$ the element of $E_0^{2m+2k}$
$$v_{\a}=\mathop {\sum}\limits_{\pi\in \S3} \pi(x^{a}\otimes x^{b}\otimes x^{c}).$$
It is obvious that $\{v_{\a}\mid |\a|=m+k\}$ is a basis of $E_0^{2m+2k}(3).$ Using the lexicographic order on
$\a$'s we define an increasing filtration on $E_0^{2m+2k}(3)$ by
$$\mc{T}_{\a}=\mathop{\bigoplus}\limits_{\b \leq \a}\mathbb{Q}\lan v_{\b}\ran.$$

We choose the basis of the module $E_{1}^{(2m-1)+2k}(3)$ given by:
$$ \begin{array}{ll}
  \sigma_{a}=& (x^{a}\otimes 1\otimes x^{k-a}+x^{k-a}\otimes 1\otimes x^a)G_{12}+ (x^{a}
              \otimes x^{k-a}\otimes 1+x^{k-a}\otimes x^a\otimes1)G_{13}  \\
             & +(x^{k-a}\otimes x^{a}\ot 1+x^{a}\otimes x^{k-a}\otimes 1)G_{23}\\
  \delta_{a}= &(x^{a}\otimes 1\otimes x^{k-a}-x^{k-a}\otimes 1\otimes x^a)G_{12}+ (x^{a}
               \otimes x^{k-a}\otimes 1-x^{k-a}\otimes x^a\otimes1)G_{13} \\
              & +(x^{k-a}\otimes x^{a}\ot 1-x^{a}\otimes x^{k-a}\otimes 1)G_{23},
\end{array}
$$
where $\lceil \frac{k}{2}\rceil\leq a \leq k.$

We define an increasing filtration on $E_{1}^{(2m-1)+2k}(3)$ in two steps:
$$\mathcal{D}_{a}=\mathop{\bigoplus}\limits_{b\leq a}\mathbb{Q}\lan\delta_{b}\ran \mbox{   and   }
\mathcal{G}_{a}=\mathcal{D}_k\bigoplus
(\mathop{\oplus}\limits_{b\leq a}\mathbb{Q}\lan\sigma_{b}\ran).$$
The leading monomial of $d(\delta_a)$ is given by the projection
of $d(\delta_a)$ on the monomials containing $x^m$ or $x^{m-1}$ on
the first position:
$$ \begin{array}{rcl}
  {\pr}_{(m,m-1)}(d(\delta_a)) & = & x^{m}\otimes x^{a}\otimes x^{k-a}+x^{m-1}\otimes x^{a+1}\otimes x^{k-a}- \\
              &   & -x^{m}\otimes x^{k-a}\otimes x^{a}- x^{m-1}\otimes x^{k-a+1}\otimes x^a+\\
              &   & +x^{m}\otimes x^{k-a}\otimes x^{a}+x^{m-1}\otimes x^{k-a}\otimes x^{a+1}- \\
              &   & -x^{m}\otimes x^{a}\otimes x^{k-a}-x^{m-1}\otimes x^{a}\otimes x^{k-a+1}=\\
              &  = & x^{m-1}\otimes x^{a+1}\otimes x^{k-a}+x^{m-1}\otimes x^{k-a}\otimes x^{a+1}-\\
              &   &  - x^{m-1}\otimes x^{k-a+1}\otimes x^a-x^{m-1}\otimes x^{a}\otimes x^{k-a+1}
\end{array}
$$
On the other hand,
$$ \begin{array}{rcl}
  {\pr}_{(m,m-1)}(d(\sigma_a)) & = & x^{m}\otimes x^{a}\otimes x^{k-a}+x^{m-1}\otimes x^{a+1}\otimes x^{k-a}+ \\
                             &   & +x^{m}\otimes x^{k-a}\otimes x^{a}+ x^{m-1}\otimes x^{k-a+1}\otimes x^a+\\
              &   & +x^{m}\otimes x^{k-a}\otimes x^{a}+x^{m-1}\otimes x^{k-a}\otimes x^{a+1}+ \\
              &   & +x^{m}\otimes x^{a}\otimes x^{k-a}+x^{m-1}\otimes x^{a}\otimes x^{k-a+1},\\
\end{array}
$$
hence $d(\sigma _a)$ has $x^m\otimes x^a\otimes x^{k-a}$ as the leading monomial. These computations
imply the injectivity of the map induced by the differential $d$ between the graded modules associated to
the filtrations $(\mc{D}_*,\mc{G}_*)$ and $\mc{T}_*$:
$$(\mathop{\oplus}\limits_{a=\lceil\frac{k}{2}\rceil}^{k}\mc{D}_{a}\diagup \mc{D}_{a-1})\bigoplus
(\mathop{\oplus}\limits_{a=\lceil\frac{k}{2}\rceil}^{k}\mc{G}_{a}\diagup \mc{G}_{a-1})\mathop{\rightarrowtail}
\limits^{d} \mathop{\bigoplus}\limits_{\a}\mc{T}_{\a}\diagup \mc{T}_{<\a}$$
(here $\mc{D}_{\lceil\frac{k}{2}\rceil-1}=0$ and $\mc{G}_{\lceil\frac{k}{2}\rceil-1}=\mc{D}_k$.)
Therefore the map $d:E_1^{(2m-1)+2k}(3)\ra E_0^{2m+2k}(3)$ is injective for $k\leq m-1.$
\end{proof}
\begin{lem}\label{lem32}
For $m\leq k\leq 2m-1$, the dimension of the space of cocycles in
$E_1^{(2m-1)+2k}(3)$ is at most one.
\end{lem}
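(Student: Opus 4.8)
I would continue exactly in the spirit of the proof of Lemma \ref{lem31}, but now in the regime $m\leq k\leq 2m-1$ where the exterior elements $\s_a,\d_a$ are constrained by the ceiling $x^{m+1}=0$. First I would record that, by the $\mc{S}_3$-duality used in Proposition \ref{lem2.1}, $E_1^{(2m-1)+2k}(3)$ for $m\leq k\leq 2m-1$ is isomorphic to $E_1^{(2m-1)+2(2m-1-k)}(3)$ with $0\leq 2m-1-k\leq m-1$, so one already knows its dimension from the table; the point of the lemma is the \emph{cocycle} count, i.e.\ $\dim\ker(d_3)$, not the dimension of the whole space. The natural strategy is: (i) exhibit a basis of $E_1^{(2m-1)+2k}(3)$ of the same $\s$/$\d$ type as in Lemma \ref{lem31}, but with the indices now running only over those $a$ for which $x^a\o x^{k-a}$ makes sense, i.e.\ $k-m\leq a\leq m$ together with $a\geq\lceil k/2\rceil$; (ii) compute the leading term of $d$ applied to each basis vector with respect to a suitable filtration on $E_0^{2m+2k}(3)$; (iii) show that at most one of these leading terms can vanish, forcing all but (at most) one basis vector to map injectively.

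**The key steps.**
Concretely, I would keep the filtration $\mc{T}_*$ on $E_0^{2m+2k}(3)$ defined lexicographically on triples $\a=(a,b,c)$ with $|\a|=m+k$ and $m\geq a\geq b\geq c\geq 0$, and again project onto the monomials with $x^m$ or $x^{m-1}$ in the first slot. The computation of $\pr_{(m,m-1)}(d\d_a)$ and $\pr_{(m,m-1)}(d\s_a)$ is formally the same as in Lemma \ref{lem31}, \emph{except} that now some of the tensor factors $x^{a+1}$, $x^{k-a+1}$, $x^{k-a}$ may exceed degree $m$ and hence vanish. The crucial observation is that for $\s_a$ the leading monomial is $x^m\o x^a\o x^{k-a}$, which survives precisely when $a\le m$ and $k-a\le m$, i.e.\ for all $a$ in the allowed range — so every $\s_a$ still has a nonzero, distinct leading term, and the $\s$-part of $d$ is injective on the associated graded. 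For the $\d_a$ the leading monomial (in the $x^{m-1}$-block, since the $x^m$-block cancels) is a combination of $x^{m-1}\o x^{a+1}\o x^{k-a}$ and its mates; this can degenerate only when the constraint $a+1\le m$ fails, i.e.\ at the single extreme value $a=m$ (equivalently, when the reflected index $2m-1-k$ hits its boundary). Thus at most one $\d_a$ — the boundary one — can fail to have a nonzero leading term, and a rank count then gives $\dim\ker(d_3|_{E_1^{(2m-1)+2k}(3)})\le 1$.

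**Where the difficulty lies.**
The genuine work is bookkeeping the boundary cases: deciding for exactly which $a$ the would-be leading monomial of $d\d_a$ collapses once one imposes $x^{m+1}=0$, and checking that the vectors whose leading monomials survive remain $\mathbb{Q}$-linearly independent in the associated graded (i.e.\ no two distinct $\s_a$, $\d_a$ share a leading triple, and none of the $\s$-leading triples coincides with a $\d$-leading triple). One must also be careful that reducing a basis vector modulo $x^{m+1}$ does not accidentally promote a lower-filtration monomial to the leading position — i.e.\ that $\pr_{(m,m-1)}$ still detects the top of the filtration. I expect this to be the main obstacle, but it is the same kind of explicit monomial chase already carried out in Lemma \ref{lem31}; once the one degenerate $\d_a$ is isolated, the rank-nullity conclusion $\dim(\text{cocycles})\le 1$ is immediate. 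As a consistency check one can compare with the dual isotypical dimensions in Proposition \ref{lem2.1}, which predict that the kernel is in fact one-dimensional for these $k$ (it is the part detected by cohomology in the range $4m-1\le \ast\le 6m-1$ on the $E_*^*(3)$ diagram); the lemma only needs the inequality, so I would stop at the upper bound.
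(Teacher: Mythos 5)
Your proposal follows essentially the same route as the paper's proof: the same $\sigma_a,\delta_a$ basis now indexed by $\lceil k/2\rceil\le a\le m$, the same lexicographic filtration with projection onto the $x^m$/$x^{m-1}$ block, and the identification of a single problematic basis vector at the boundary $a=m$, giving injectivity on a hyperplane and hence a space of cocycles of dimension at most one. The only cosmetic difference is that the paper keeps $\delta_m$ (computing that its actual leading term $x^{m}\otimes x^{m}\otimes x^{k-m}$ collides with that of $\sigma_m$) and removes $\sigma_m$ from the hyperplane, whereas you remove $\delta_m$; either choice yields the same bound.
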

\begin{proof}
We consider the basis of the module $E_{1}^{(2m-1)+2k}(3)$ given by the vectors $\sigma_a$ and $\delta_a$,
where $\lceil\frac{k}{2}\rceil\leq a\leq m$ and the same filtration as defined in the previous lemma. We prove
that the induced map on the hyperplane $\mc{G}_{m-1}$ is injective. Similar computations for the leading terms
in the differentials of the elements of the basis $\{\sigma_*,\delta_*\}$ give that the differential of
$\delta_a$, for $a\neq m$, has the highest term $x^{m-1}\otimes x^{a+1}\otimes x^{k-a}$ (for $\delta_m$ the highest
term is $x^{m}\otimes x^{m}\otimes x^{k-m}$). On the other hand, the leading monomial in the differential of
$\sigma_a$ is $x^{m}\otimes x^{a}\otimes x^{k-a}$. Thus the leading monomials do not interfere except the ones
corresponding to $\delta_m$ and $\sigma_m$ and the differential is injective on the hyperplane
$$\mathcal{G}_{m-1}=\mathcal{D}_m\bigoplus (\mathop{\oplus}\limits_{b\leq m-1}\mathbb{Q}\lan\sigma_{b}\ran).$$
\end{proof}
\begin{rem}
For $k=2m$, $d:E_1^{6m-1}(3)\ra E_0^{6m}(3)$ is injective since
both the spaces are one dimensional and the differential is not
zero. In this case the unique generator is
$\sigma_m=2(x^m\otimes1\otimes x^mG_{12}+x^m\otimes x^m\otimes 1
G_{13}+x^m\otimes x^m\otimes 1G_{23})\in E_1^{6m-1}(3)$. More
generally, the right side of the trapezoid $E_*^*$ is acyclic, see
\cite{AAB}.
\end{rem}
The coordinates of the vector $W$ in the next lemma correspond to the frame
$x^{m}\otimes 1\otimes 1 G_{12}+x^{m}\otimes 1\otimes 1 G_{13}+1\ot x^{m}\otimes 1 G_{23},x^{m-1}\otimes 1
\otimes x G_{12}+x^{m-1}\otimes x\otimes 1 G_{13}+x\ot x^{m-1}\otimes 1 G_{23},\ldots, 1\otimes 1
\otimes x^m G_{12}+1\otimes x^m\otimes 1  G_{13}+ x^{m}\ot1\otimes 1 G_{23}$ of $E_1^{4m-1}(3).$
\begin{lem}\label{lem34}
The kernel of $d:E_{1}^{4m-1}(3)\ra E_{0}^{4m}(3)$ is one dimensional and is generated by the vector
$W=(2m,2m-3,2m-6,\ldots,-m+3,-m)$.
\end{lem}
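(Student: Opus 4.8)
The plan is to compute the differential $d:E_1^{4m-1}(3)\ra E_0^{4m}(3)$ explicitly on the given frame and then solve a linear system. First I would record the frame vectors of $E_1^{4m-1}(3)$: for $0\leq j\leq m$ put
$$g_j = x^{m-j}\ot 1\ot x^j\,G_{12}+x^{m-j}\ot x^j\ot 1\,G_{13}+x^j\ot x^{m-j}\ot 1\,G_{23},$$
so a general cocycle candidate is $W=\sum_{j=0}^m c_j g_j$. The only ingredient needed is $d(G_{ij})=p_{ij}^*(\Delta)$ with $\Delta=\sum_{a+b=m}x^a\ot x^b$, together with the \Kz relations $p_j^*(x)G_{ij}=p_i^*(x)G_{ij}$ for $i<j$ (which let one move powers of $x$ across a generator) and the product rule in the exterior algebra. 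Concretely, $d(g_j)$ is an element of $E_0^{4m}(3)\cong (\text{degree-}2m\text{ part of }H^{*\ot 3})$, i.e. a combination of monomials $x^p\ot x^q\ot x^r$ with $p+q+r=2m$, $0\leq p,q,r\leq m$; I would expand each $d(g_j)$ in this monomial basis.

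The key computational step is that $d(g_j)$ turns out to be supported, after using Poincar\'e-duality symmetrization, on a small number of monomial types, and that the coefficient of a fixed monomial $v_\a$ in $d(W)=\sum_j c_j d(g_j)$ is an \emph{affine-linear} function of $j$ — this is exactly why an arithmetic-progression solution $c_j = 2m-3j$ appears. The cleanest route: pair $d(W)$ against the basis $\{v_\a\}$ of $E_0^{4m}(3)$ from Lemma \ref{lem31} (the $\S3$-orbit sums), observe that $E_0^{4m}(3)$ is the isotypic decomposition place where by Proposition \ref{lem2.1} the relevant multiplicities are small, so the condition $d(W)=0$ reduces to finitely many equations whose coefficients are linear in $j$; solving a $2\times 2$ (or $3\times 3$) subsystem forces $c_j$ to lie on a single line, and normalizing gives $(c_0,\ldots,c_m)=(2m,2m-3,\ldots,-m)$. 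Finally I would check $\dim\ker d\le 1$: this follows because $d$ restricted to the span of $g_0,\dots,g_{m-1}$ is injective (a leading-monomial / triangularity argument identical to the one in Lemma \ref{lem31} and Lemma \ref{lem32}, using the lexicographic filtration $\mc{T}_\a$ on $E_0^{4m}(3)$), so the kernel meets a hyperplane trivially and hence is at most one-dimensional; combined with the explicit nonzero $W$, the kernel is exactly $\mathbb{Q}\lan W\ran$.

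Alternatively, and perhaps more transparently, one can avoid re-deriving everything by invoking Proposition \ref{lem2.1}: at total degree contributing to the pair $(k,q)=(1,4m-1)$ one has $E_1^{4m-1}(3)$ of dimension $m+1$ sitting in the middle of the exterior-degree-one strip, and the earlier lemmas already show $d$ is injective on $E_1^{(2m-1)+2k}(3)$ for $k\le m-1$ and has at most one-dimensional kernel for $k\ge m$; the degree $4m-1$ is the value $k=m$, the boundary case, so Lemma \ref{lem32} already gives $\dim\ker\le 1$, and it remains only to exhibit one nonzero cocycle. For that exhibition I would simply verify $d(W)=0$ by direct substitution: compute $d(g_j)$ in the monomial basis, form $\sum_j (2m-3j)\,d(g_j)$, and check the coefficient of each monomial $x^p\ot x^q\ot x^r$ vanishes — each such coefficient is a short telescoping sum of terms $\pm(2m-3j)$ over the two or three values of $j$ that can produce that monomial from the three summands defining $g_j$, and the arithmetic-progression weights are rigged precisely so these cancel.

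I expect the main obstacle to be bookkeeping rather than conceptual: correctly accounting for the \Kz relation $p_j^*(x)G_{ij}=p_i^*(x)G_{ij}$ when differentiating (so that monomials get their $x$-powers redistributed onto the ``free'' slots), and making sure the $\S3$-symmetrization in the definition of $g_j$ and of $v_\a$ is handled consistently so that no monomial is double-counted. Once $d(g_j)$ is written out honestly, the cancellation forcing the coefficients $2m-3j$ is immediate; the only subtlety is checking the boundary summands $j=0$ and $j=m$, where the three terms of $g_j$ partially coincide, do not spoil the pattern — they do not, which is consistent with the stated endpoints $2m$ and $-m$ of the vector $W$.
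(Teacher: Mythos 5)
Your proposal is correct and essentially reproduces the paper's argument: the bound $\dim\ker d\leq 1$ is exactly Lemma \ref{lem32} at the boundary value $k=m$ (your preferred second route), and the cocycle condition $dW=0$ is checked just as you describe, since the monomial $x^a\ot x^b\ot x^c$ with $a+b+c=2m$ arises (each time with coefficient $+1$) only from the rows indexed by $a$, $b$ and $c$, so its coefficient in $dW$ is $(2m-3a)+(2m-3b)+(2m-3c)=6m-3(a+b+c)=0$. One caution: your first upper-bound variant (leading-monomial triangularity of $d$ on the raw frame $g_0,\dots,g_{m-1}$) does not work verbatim, because $d(g_j)$ and $d(g_{m-j})$ share the same leading part $x^m\ot x^{m-j}\ot x^j+x^m\ot x^j\ot x^{m-j}$ — this is precisely why Lemmas \ref{lem31}--\ref{lem32} pass to the symmetrized/antisymmetrized basis $\sigma_a,\delta_a$, so quoting Lemma \ref{lem32} directly is indeed the right move.
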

\begin{proof}
From the previous lemma the dimension of the kernel is $\leq1$,
and now we compute the differential of $W$:
$$ \begin{array}{rl}
  W = & 2m(x^{m}\otimes 1\otimes 1 G_{12}+x^{m}\otimes 1\otimes 1 G_{13}+1\ot x^{m}\otimes 1 G_{23})+ \\
     & +(2m-3)(x^{m-1}\otimes 1\otimes x G_{12}+x^{m-1}\otimes x\otimes 1 G_{13}+x\ot x^{m-1}\otimes 1 G_{23})+\ldots \\
     & +(2m-3a)(x^{m-a}\otimes 1\otimes x^a G_{12}+x^{m-a}\otimes x^a\otimes 1 G_{13}+x^a\ot x^{m-a}\otimes 1 G_{23}) \\
       &+\ldots +(2m-3m)(1\ot 1\ot x^{m} G_{12}+1\ot x^{m}\otimes 1 G_{13}+ x^{m}\otimes 1\ot 1 G_{23}).
\end{array}
$$
A monomial $x^a\ot x^b\ot x^c$, $m\geq a\geq b\geq c\geq 0$,
$a+b+c=2m$ appears at most once in the differential of each of the
rows of $W$, and the total sum of its coefficients equals
$$(2m-3a)+(2m-3b)+(2m-3c)=6m-3(a+b+c)=0.$$
Hence $dW=0$ and the kernel of $d:E_1^{4m-1}(3)\ra E_0^{4m}(3)$ is
one dimensional.
\end{proof}
\begin{lem}\label{lem35}
For $m\leq k\leq 2m-1$, the space of cocycles in $E_1^{(2m-1)+2k}(3)$ is one dimensional.
\end{lem}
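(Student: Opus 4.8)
The plan is to combine the two bounds we already have. From Lemma \ref{lem32}, for $m\leq k\leq 2m-1$ the space of cocycles in $E_1^{(2m-1)+2k}(3)$ has dimension at most one, so it suffices to exhibit a single nonzero cocycle in this range. I would do this by adapting the explicit vector $W$ of Lemma \ref{lem34}, which handles the endpoint case $k=2m$ after a shift of perspective: there the kernel of $d:E_1^{4m-1}(3)\to E_0^{4m}(3)$ is spanned by the ``balanced'' vector whose coefficients are an arithmetic progression of step $3$ summing to zero on each orbit. The key observation is that the argument computing $dW$ in Lemma \ref{lem34} is essentially formal: a monomial $x^a\o x^b\o x^c$ with $a+b+c$ equal to the appropriate total degree picks up, from the three rows of the candidate vector indexed by the exponents it meets, a coefficient of the form $(2m-3a)+(2m-3b)+(2m-3c)=6m-3(a+b+c)$, which vanishes precisely when $a+b+c=2m$.

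So, following that template, for each $k$ in the range $m\leq k\leq 2m-1$ I would write down the analogue of $W$ in $E_1^{(2m-1)+2k}(3)$ using the basis $\{\sigma_a,\delta_a\}$ of the previous two lemmas, with coefficients chosen so that the three coefficients attached to any monomial $x^a\o x^b\o x^c$ (with $m\geq a\geq b\geq c\geq 0$, $a+b+c=m+k$) appearing in the differential add up to zero. Concretely, I expect the cocycle to be $\sum_a c_a\sigma_a$ with $c_a$ a linear function of $a$ (a truncated arithmetic progression, since $a$ now ranges only over $\lceil k/2\rceil\leq a\leq m$ rather than all of $0,\dots,m$), and the summed-coefficient condition reduces, as in Lemma \ref{lem34}, to a single linear identity of the form ``sum over the three positions of $(\text{affine function})$ is constant $\cdot(a+b+c)+\text{const}=0$''. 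One must also check that the $\delta$-part can be taken to be zero: since the leading monomials of the $d(\delta_a)$ do not interfere with those of the $d(\sigma_a)$ (this is exactly what the filtration argument in Lemma \ref{lem32} established, except for the pair $\delta_m,\sigma_m$), no cocycle combination can have a nontrivial $\delta$-component, so the cocycle is forced to live in the span of the $\sigma_a$, and the one relation cutting it out there is the vanishing of $dW$.

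The main obstacle will be bookkeeping at the two ``edges'' of the range. When $k$ is close to $2m-1$ the index set $\lceil k/2\rceil\leq a\leq m$ is short, and one has to make sure the resulting linear system for the coefficients is genuinely nonzero (i.e. that the candidate cocycle is not identically $0$), rather than an empty or degenerate progression; the remark immediately preceding the lemma already does this for $k=2m$, where the single generator is $\sigma_m$, so the $k=2m-1$ and nearby cases should follow the same pattern. When $k$ is close to $m$, one has to confirm the cocycle thus produced is consistent with Lemma \ref{lem31} (which covers $0\leq k\leq m-1$ and gives injectivity, hence \emph{no} cocycles) — i.e. the transition at $k=m$ is where a one-dimensional kernel appears, matching the general acyclicity picture on the right edge of the trapezoid mentioned in the remarks. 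Once the explicit cocycle is in hand, combining it with the dimension bound of Lemma \ref{lem32} gives that the cocycle space is exactly one-dimensional, completing the proof.
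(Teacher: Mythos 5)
Your overall strategy --- the upper bound of Lemma \ref{lem32} plus one explicit nonzero cocycle --- is exactly the paper's, but the concrete construction of that cocycle contains a genuine error. The unique (up to scalar) cocycle in $E_1^{(2m-1)+2k}(3)$ is \emph{not} of the form $\sum_a c_a\sigma_a$: it necessarily has a nontrivial $\delta$-component. To see this, work in the orbit basis $B_{i,k-i}=x^i\o 1\o x^{k-i}G_{12}+x^i\o x^{k-i}\o 1\,G_{13}+x^{k-i}\o x^i\o 1\,G_{23}$ (the frame of Lemma \ref{lem34}); the condition that the three coefficients attached to each monomial of the differential sum to zero is solved, exactly as in Lemma \ref{lem34}, by the affine coefficients $\sum_i(2k-m-3i)B_{i,k-i}$, and by the dimension bound this spans the whole kernel. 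Since $\sigma_a=B_{a,k-a}+B_{k-a,a}$ and $\delta_a=B_{a,k-a}-B_{k-a,a}$, the $\delta_a$-coordinate of this generator is $\frac12\big((2k-m-3a)-(3a-k-m)\big)=\frac32(k-2a)$, which is nonzero for $a\neq k/2$ (in particular for $a=m$ when $k<2m$). Your justification for discarding the $\delta$'s is also logically invalid: the filtration argument of Lemma \ref{lem32} proves injectivity of $d$ on the hyperplane $\mc{D}_m\oplus\langle\sigma_b\rangle_{b\leq m-1}$, which \emph{contains} all the $\delta_a$; this forces the kernel line to have nonzero $\sigma_m$-coordinate but constrains its $\delta$-coordinates not at all, and non-interference of leading monomials says nothing about cancellations among the lower-order terms of the full differentials. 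If you insist on a pure-$\sigma$ ansatz, the resulting linear system has only the zero solution, so the proof as written produces no cocycle.

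The repair is either to run your ``sum over the three positions vanishes'' computation in the $B_{i,k-i}$ basis as above, or --- as the paper does --- to avoid any ansatz: since $d$ vanishes on scalars, $t_{k-m}W$ with $t_j=x^j\o1\o1+1\o x^j\o1+1\o1\o x^j$ is automatically a cocycle of the right bidegree, and one only needs to check a single coefficient (that of $x^m\o1\o x^{k-m}G_{12}$, equal to $6(2m-k)\neq 0$ for $k<2m$) to see it is nonzero.
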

\begin{proof}
Multiplying the cocycle $W\in E_1^{(2m-1)+2m}(3)$ by the symmetric
scalar $t_k=x^{k}\otimes 1\otimes1+1\otimes
x^{k}\otimes1+1\otimes1\otimes x^{k}$, $k=0,\ldots,m-1,$ we
obtain an $\mc{S}_3$ invariant cocycle which is not zero, since
the coefficient of $x^{m}\otimes1\otimes x^{k} G_{12}$ in $t_kW$
is $2m+2(2m-3k)=6m-6k$, which is non-zero for $k\neq m$.
\end{proof}
\begin{rem}\label{rem36}
The element $t_{m}W$ is also a cocycle, but $t_{m}W=0.$
\end{rem}
Now we can compute the Poincar\'{e} polynomial for the
unordered configuration space $C(\mathbb{C}P^m,3)$:
\begin{prop}\label{prop37}
The Poincar\'{e} polynomial of the unordered configuration space
$C(\mathbb{C}P^m,3)$ is given by:
$$ \begin{array}{rl}
  P_{C(\mathbb{C}P^m,3)}(t,s)=  &  \mathop{\sum}\limits_{k=0}^{m-1}(P_{3,\leq m}(k))t^{2k} +\mathop{\sum}
  \limits_{k=m}^{2m-1}(P_{3,\leq m}(k)+m-k-1)t^{2k}+  \\
                  & +\mathop{\sum}\limits_{k=2m}^{3m-1}(P_{3,\leq m}(k)-3m+k)t^{2k}+ \mathop{\sum}\limits_{k=2m}^{3m-1}st^{2k-1}= \\
    = &  1+t^2+2t^4+3t^6+4t^8+5t^{10}+7t^{12}+8t^{14}+10t^{16}+\\
     & +12t^{18}+\ldots +2t^{6m-16}+t^{6m-14}+t^{6m-12}+ \\
      & +s(t^{4m-1}+t^{4m+1}+\ldots t^{6m-5}+t^{6m-3}).
\end{array}
$$
where, in the second expansion, the coefficient of $t^{6m-12}$ is stable for $m\geq5$ (the coefficient of $t^{6m-14}$
is stable for $m\geq6$ and the first terms for $m\geq10$).
\end{prop}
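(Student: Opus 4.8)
The plan is to assemble the Poincaré polynomial of $C(\mathbb{C}P^m,3)$ from the cohomology of the invariant part $E_*^*(3)$, which by definition computes $H^*(C(\mathbb{C}P^m,3))$ after passing to cohomology. The starting point is Proposition~\ref{lem2.1}, which records the dimensions of the $V(3)$-isotypical components: $\dim E_0^{2k}(3)=P_{3,\leq m}(k)$ for $0\leq 2k\leq 3m$, $\dim E_1^{(2m-1)+2k}(3)=1$ for $0\leq k\leq m$ (one copy of $V(3)$ for each admissible marked graph $(\Gamma_{(2,1)},(x^{h'},x^{h''}))$), and $E_2^*(3)=0$. So the complex $E_*^*(3)$ lives in exterior degrees $0$ and $1$ only, and $H^*(C(\mathbb{C}P^m,3))$ is the direct sum of the kernels and cokernels of the single differential $d\colon E_1^{(2m-1)+2k}(3)\to E_0^{2m+2k}(3)$ as $k$ varies.

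First I would split the range of $k$ according to the lemmas just proved. For $0\leq k\leq m-1$, Lemma~\ref{lem31} gives that $d$ is injective on $E_1^{(2m-1)+2k}(3)$; here the exterior-degree-$1$ part contributes nothing to cohomology, while the cokernel in degree $0$ contributes in the lower range where $E_1$ is supported trivially. For $m\leq k\leq 2m-1$, Lemma~\ref{lem35} (together with Lemma~\ref{lem32} and Lemma~\ref{lem34} for the boundary case $k=m$) shows the space of cocycles in $E_1^{(2m-1)+2k}(3)$ is exactly one-dimensional, so $H_1^{(2m-1)+2k}(3)$ has dimension $1$ for each such $k$; dualizing, the rank of $d$ into $E_0^{2m+2k}(3)$ drops by $1$ from the generic value. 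For $k=2m$ the remark shows $d$ is again injective ($E_1^{6m-1}$ and $E_0^{6m}$ are both one-dimensional), so there is no further cocycle. Putting this together, I would compute $\dim H_0^{2k}(3)$ by subtracting the rank of the incoming differential from $\dim E_0^{2k}(3)=P_{3,\leq m}(k)$; the correction is $0$ for $k\leq m-1$, is $-(k-m+1)$, i.e. $+(m-k-1)$ relative to the naive count, across $m\leq k\leq 2m-1$, and then the Poincaré-duality symmetry $E_0^{2k}\cong E_0^{6m-2k}$ of Proposition~\ref{lem2.1} forces the pattern $P_{3,\leq m}(k)-3m+k$ for $2m\leq k\leq 3m-1$. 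The exterior-degree-$1$ cohomology contributes $s\sum_{k=m}^{2m-1}t^{(2m-1)+2k}=s(t^{4m-1}+t^{4m+1}+\cdots+t^{6m-3})$, which I would re-index as $s\sum_{k=2m}^{3m-1}t^{2k-1}$.

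The last step is to turn the resulting closed formula into the explicit small-degree expansion displayed in the statement, and to check the stabilization claims: one substitutes the known values of $P_{3,\leq m}(k)$ from the Appendix for small $k$ (where the bound $m$ is inactive once $m\geq 5$, so $P_{3,\leq m}(k)=P_3(k)$), reads off $1,1,2,3,4,5,7,8,10,12,\dots$ for the low-degree coefficients, uses the duality to get the top coefficients $\dots,2,1,1$ near degree $6m-12$, and verifies that the middle coefficient of $t^{6m-12}$ is already the stable value for $m\geq 5$ while $t^{6m-14}$ stabilizes at $m\geq 6$ and the genuinely low-degree terms at $m\geq 10$ (these thresholds are exactly where the two "ends" of the trapezoid $E_0^*$ stop overlapping).

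I expect the main obstacle to be bookkeeping rather than conceptual: keeping the rank count of $d$ consistent across the three $k$-ranges and making sure the Poincaré-duality reflection is applied to the correct half of the range, so that the single cocycle contributed in degrees $4m-1,\dots,6m-3$ is not double-counted against the drop in $\dim H_0$. A secondary subtlety is the boundary value $k=m$, where Lemma~\ref{lem34} identifies the one-dimensional kernel explicitly as $W$ and Remark~\ref{rem36} warns that the naive next cocycle $t_mW$ vanishes; I would treat $k=m$ as a separate case to avoid an off-by-one error in the coefficient $m-k-1$ (which is $-1$ at $k=m$, correctly reflecting that $\dim E_0^{4m}(3)$ is reduced by one more than $P_{3,\leq m}(m)$ would suggest). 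Once these ranges are reconciled, summing the three sums and the $s$-term gives exactly the stated $P_{C(\mathbb{C}P^m,3)}(t,s)$.
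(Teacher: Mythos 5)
Your proposal follows essentially the same route as the paper: read the $V(3)$-multiplicities off Proposition \ref{lem2.1}, use Lemmas \ref{lem31} and \ref{lem35} (plus the remark for $k=2m$ and the vanishing of $E_2^*(3)$) to determine the rank of $d_3$ in each range of $k$, and then check the stabilization of the coefficients via $P_{3,\leq m}(3m-l)=P_{3,\leq m}(l)$. The only slip is the assertion $\dim E_1^{(2m-1)+2k}(3)=1$ — that multiplicity is $k+1$, one copy of $V(3)$ per mark, as your own parenthetical indicates — but your subsequent rank bookkeeping (correction $-(k-m+1)$ in the middle range and $-(3m-k)$ in the top range) uses the correct value, so the computation goes through.
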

\begin{proof}
The proof follows from the computations of the multiplicity of the
trivial representation $V(3)$ in $E_1^{(2m-1)+2k}$ and $E_0^{2k}$
in Section 2 and the Lemmas \ref{lem31} and
\ref{lem35}. For $k=3m-l$, $0\leq l\leq6$, we have $P_{3,\leq m}(k)=P_{3,\leq m}(l)$ so
$$P_{3,\leq m}(k)-3m+k=P_{3,\leq m}(l)-l$$ which is zero for $0\leq l\leq5$; for $l=6$ and $m\geq6$
we have $P_{3,\leq m}(6)=7$, giving $\b_{6m-12}=1.$ For $m=5$ the monomial $t^{18}$
belongs to the second sum and its coefficient is 1.
The initial terms in the second expansion are stable for $m\geq10$: $P_{3,\leq m}(10)=P_3(10)$.
\end{proof}
\begin{rem}
For $m=2,3,4$ we have
\[\begin{array}{cl}
    P_{C(\mathbb{C}P^2,3)}(t,s)= &  1+t^2+t^4+s(t^7+t^9),\\
    P_{C(\mathbb{C}P^3,3)}(t,s)= &  1+t^2+2t^4+2t^6+t^8+s(t^{11}+t^{13}+t^{15}),\\
    P_{C(\mathbb{C}P^4,3)}(t,s)= &  1+t^2+2t^4+3t^6+3t^8+2t^{10}+2t^{12}+s(t^{15}+t^{17}+t^{19}+t^{21}).\\
  \end{array}
\]
\end{rem}
The differentials of the $(2,1)$-component have similar properties.
\begin{lem}\label{lem3.8}
For $0\leq k\leq m-1$, the differential $d_{2,1}$ is injective
$$d:E_{1}^{(2m-1)+2k}(2,1)\rightarrowtail E_{0}^{2m+2k}(2,1).$$
\end{lem}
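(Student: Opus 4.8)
The plan is to mimic the proof of Lemma \ref{lem31}, adapting the filtration argument from the trivial-isotypical component $E_*^*(3)$ to the standard component $E_*^*(2,1)$. First I would fix $k$ with $0\leq k\leq m-1$ and recall from Proposition \ref{lem2.1} that both $E_1^{(2m-1)+2k}(2,1)$ and $E_0^{2m+2k}(2,1)$ have multiplicity $k+1$ of $V(2,1)$, so it suffices to exhibit one copy of the standard representation inside each and check injectivity component-wise; equivalently, after choosing a suitable vector in each irreducible summand, I reduce to a linear-algebra statement about a $(k+1)\times(k+1)$ transition matrix. Concretely, I would write down an explicit basis of the $V(2,1)$-isotypical part of $E_1^{(2m-1)+2k}(2,1)$ in terms of the marked graphs $(\Gamma_{(2,1)},(x^{h'},x^{h''}))$ with $h'+h''=k$ listed in the proof of Proposition \ref{lem2.1}: for each $a$ with (say) $0\leq a\leq k$ one antisymmetrizes under the transposition fixing the relevant edge to extract the standard-type vectors, producing vectors $\tau_a$ analogous to the $\sigma_a,\delta_a$ of Lemma \ref{lem31}. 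On the target side I would use the $V(2,1)$-part of $E_0^{2m+2k}(2,1)=\mathbb{Q}[\mathcal{S}_3]$-span of the monomials $x^a\otimes x^b\otimes x^c$ with $a+b+c=m+k$, picking for each partition-type $(a,b,c)$ a standard-representation vector $w_{(a,b,c)}$ (an appropriate signed sum over $\mathcal{S}_3$).

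Next I would install the same kind of lexicographic/leading-monomial filtration as in Lemma \ref{lem31}: order the target vectors $w_\alpha$ by the lexicographic order on $\alpha=(a,b,c)$ and order the source vectors $\tau_a$ by $a$, then compute the leading term of $d(\tau_a)$ by projecting onto the monomials carrying $x^m$ or $x^{m-1}$ in the first slot, exactly as the computation of $\mathrm{pr}_{(m,m-1)}(d(\delta_a))$ and $\mathrm{pr}_{(m,m-1)}(d(\sigma_a))$ was carried out. Because $d(G_{ij})=p_{ij}^*(\Delta)$ and $\Delta=x^m\otimes 1+x^{m-1}\otimes x+\dots+1\otimes x^m$, the differential of each basis vector is a fixed signed sum of monomials $x^{a'}\otimes x^{b'}\otimes x^{c'}$ of total degree $m+k$ in each slot-triple, and the condition $k\leq m-1$ is exactly what guarantees that the index $x^{m}$ (or $x^{m-1}$) actually appears, so no collision/truncation occurs and the leading terms of distinct $\tau_a$ are distinct basis vectors $w_\alpha$. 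This shows the associated graded map is injective, hence $d_{2,1}$ is injective in this range.

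The main obstacle I anticipate is purely bookkeeping: choosing the signed $\mathcal{S}_3$-combinations that correctly isolate the $V(2,1)$-isotypical component in both source and target, and verifying that these choices are compatible with the filtration so that the leading-monomial argument goes through without the two families of leading terms interfering (the analogue of the sentence ``the leading monomials do not interfere'' in Lemma \ref{lem32}). Since $V(2,1)$ is two-dimensional, one must be a little careful that the filtration respects the $\mathcal{S}_3$-action, or else work with a single chosen vector per isotypical copy and argue equivariantly; I would handle this by using the $\mathcal{S}_3$-equivariance of $d$ (already noted in the introduction) to reduce to one vector per copy, as is implicitly done for $V(3)$. Once the combinatorics of the basis is set up correctly, the rest is the same triangularity computation as in Lemma \ref{lem31}, and the bound $k\leq m-1$ is used in precisely the same place.
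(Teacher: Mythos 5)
There is a genuine gap at the crux of the argument. You assert that, after projecting onto the monomials carrying $x^m$ (or $x^{m-1}$) in the first slot, the leading terms of distinct $\tau_a$ are distinct basis vectors $w_\alpha$, and conclude by triangularity. This fails: if $\mathbf{V}_a$ denotes the $a$-th copy of $V(2,1)$ inside $E_1^{(2m-1)+2k}(2,1)$ (spanned by $x^{k-a}\otimes 1\otimes x^a\, G_{12}-x^{k-a}\otimes x^a\otimes 1\, G_{13}$ and its companion), then the $x^m$-part of $d(\mathbf{V}_a)$ lies in the span of the $\mathcal{S}_3$-orbit of $x^m\otimes x^a\otimes x^{k-a}$ --- and so does the $x^m$-part of $d(\mathbf{V}_{k-a})$. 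The $(2,1)$-isotypic component of that single orbit is only $4$-dimensional, which is exactly $\dim(\mathbf{V}_a\oplus\mathbf{V}_{k-a})$, so the two copies cannot be separated by leading monomials: one is forced to analyze a full $4\times 4$ block, not a triangular array. The paper's proof is organized around precisely this point: it filters the source by the \emph{pairs} $\mathbf{V}_a+\mathbf{V}_{k-a}$ and the target by the $(2,1)$-parts of the orbits $\mathbb{Q}(\mathcal{S}_3\, x^m\otimes x^{k-i}\otimes x^i)$, and then proves injectivity of the graded map on each $4$-dimensional piece by exhibiting an explicit nonsingular $3\times 3$ block in the matrix of $dv_{1,i}, dv_{2,i}, dv_{1,k-i}$ and invoking Schur's lemma to promote rank $\geq 3$ to rank $4$ (with a separate, easier Schur argument for the middle piece when $k$ is even).

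You do flag ``interference'' as an anticipated obstacle, but the resolution you sketch --- one vector per isotypic copy plus equivariance, or a more careful choice of ordering --- does not address it, because the collision is between two \emph{distinct} copies of $V(2,1)$ landing in the same isotypic block of a single orbit; equivariance alone only tells you that the rank of the graded map is $0$, $2$ or $4$, and an actual coefficient computation is needed to exclude $2$. (Note also that the mechanism of Lemma \ref{lem31}, which you propose to imitate, resolves the analogous collision for the $(3)$-component by a two-level projection onto the $x^m$ and $x^{m-1}$ terms of $\sigma_a$ versus $\delta_a$; transplanting that verbatim to the $(2,1)$-component is not automatic and is not what the paper does.) So the missing ingredient is the paired filtration together with the explicit rank computation; without it the triangularity claim, as stated, is false.
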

\begin{proof}
The space $E_{1}^{(2m-1)+2k}(2,1)$ consists of copies of $V(2,1)$, $\mathbf{V}_a$ (where $a$ runs from 0 to $k$),
generated by vectors of the form:

$ \mathbf{V}_a:\left\{\begin{array}{l}
              v_{1,a}=x^{k-a}\ot 1\ot x^a G_{12}-x^{k-a}\ot x^a\ot 1 G_{13} \\
              v_{2,a}=x^{k-a}\ot 1\ot x^a G_{12}-x^{a}\ot x^{k-a}\ot 1 G_{23}.
            \end{array}
\right.$

We define increasing filtrations $\{\mc{V}_i\}$ and $\{\mc{M}_i\}$, $0\leq i\leq\lfloor\frac{k}{2}\rfloor$, on
$E_1^*(2,1)$ and $E_0^*(2,1)$ respectively by
\[\begin{array}{lcl}
    \mc{V}_i & = & \mathop{\bigoplus}\limits_{0\leq a\leq i}(\mathbf{V}_a+\mathbf{V}_{k-a}), \\
    \mc{M}_{-1} & = & (2,1)-\mbox{isotypic component of }\mathop{\mathop\bigoplus
    \limits_{m-1\geq a\geq b\geq c\geq0}}\limits_{a+b+c=m+k}\mathbb{Q}(\mc{S}_3 x^a\o x^b\o x^c), \\
    \mc{M}_i & = & \mc{M}_{i-1}\bigoplus(2,1)-\mbox{isotypic component of }\mathbb{Q}(\mc{S}_3 x^m\o x^{k-i}\o x^i)
  \end{array}
\]
These filtrations are compatible with the symmetric structure and $d(\mc{V}_i)<\mc{M}_i$. We show that
the differential induces an injective map between associated graded $\mc{S}_3$-modules: the matrix of the
induced differential from $\mc{V}_i\diagup\mc{V}_{i-1}$ to $\mc{M}_i\diagup\mc{M}_{i-1}$ contains
the following block given by the coordinates of $dv_{1,i},dv_{2,i},dv_{1,k-i}$ in the canonical basis
$(x^m\o x^{k-i}\o x^i,x^m\o x^{i}\o x^{k-i}, x^{k-i}\o x^m\o x^i,\ldots)$
$$ \left(   \begin{array}{rcr}
                      1 & 1 & 0 \\
                     -1 & 0 & 1 \\
                     1 & 1 & -1 \\
  \end{array}
   \right),
$$
hence the dimension of $d(\mc{V}_i\diagup\mc{V}_{i-1})$ is greater or equal to 3. By Schur lemma $\dim d(\mc{V}_i\diagup\mc{V}_{i-1})=4,$
hence the ``graded" differential is injective. In the very special case of an even $k$, $i=\frac k2$, the component
$\mc{V}_i\diagup\mc{V}_{i-1}\cong \mathbf{V}_i+\mathbf{V}_{k-i}$ has dimension 2, the differential is non-zero, hence injective.
\end{proof}
\begin{lem}\label{lem39}
For $m\leq k\leq 2m$, the dimension of the space of cocycles in the space $E_{1}^{(2m-1)+2k}(2,1)$ is at most two.
\end{lem}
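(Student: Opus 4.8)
The plan is to imitate the proof of Lemma \ref{lem32}, the one-dimensional module $V(3)$ there being replaced by the two-dimensional $V(2,1)$.

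First I would record the shape of the source. By the $\mc{S}_3$-equivariant Poincar\'{e} duality $E_1^{(2m-1)+2k}\cong E_1^{(6m-1)-2k}$ and Proposition \ref{lem2.1}, for $m\leq k\leq 2m$ the space $E_1^{(2m-1)+2k}(2,1)$ is a direct sum of $2m-k+1$ copies $\mathbf{V}_a\cong V(2,1)$, one for each admissible index $a$ with $k-m\leq a\leq m$, spanned by the vectors $v_{1,a},v_{2,a}$ of Lemma \ref{lem3.8}. The extreme case $k=2m$ is trivial, since then $E_0^{6m}(2,1)=0$ and the source is a single copy of $V(2,1)$; so I would henceforth assume $m\leq k\leq 2m-1$.

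Next I would re-use the two filtrations $\{\mc{V}_i\}$ and $\{\mc{M}_i\}$ from the proof of Lemma \ref{lem3.8}, with the index $i$ now running only from $k-m$ to $\lfloor\frac{k}{2}\rfloor$ (the copies $\mathbf{V}_a$ with $a<k-m$ are zero, since $x^{k-a}$ would have exponent exceeding $m$). For a band $k-m<i\leq\lfloor\frac{k}{2}\rfloor$ the three exponents $m,k-i,i$ are still pairwise distinct — one uses $i>k-m$ to get $k-i<m$ and $k<2m$ to get $i<m$ — so $\mc{M}_i/\mc{M}_{i-1}$ is again a copy of $2V(2,1)$, and the $3\times 3$-block computation of Lemma \ref{lem3.8} together with Schur's lemma shows the induced differential $\mc{V}_i/\mc{V}_{i-1}\to\mc{M}_i/\mc{M}_{i-1}$ is injective (the two-dimensional special case $k$ even, $i=\frac{k}{2}$ being treated exactly as there). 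The new phenomenon is the bottom band $i=k-m$: here the relevant monomial is $x^m\o x^m\o x^{k-m}$, whose $\mc{S}_3$-orbit spans only $V(3)\oplus V(2,1)$, so $\mc{M}_{k-m}/\mc{M}_{k-m-1}$ is a single copy of $V(2,1)$. I would then compute $d(v_{1,k-m})=x^m\o x^m\o x^{k-m}-x^m\o x^{k-m}\o x^m$ — both $d(x^m\o 1\o x^{k-m}G_{12})$ and $d(x^m\o x^{k-m}\o 1\,G_{13})$ collapse to a single term because the first tensor slot is already saturated — observe that this vector is non-zero and lies in the $(2,1)$-part, and conclude by Schur's lemma that the induced map $\mathbf{V}_{k-m}\to\mc{M}_{k-m}/\mc{M}_{k-m-1}$ is an isomorphism.

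Finally I would put $\mc{V}'=\bigoplus_{a\neq m}\mathbf{V}_a$, a submodule of codimension $2$ in $E_1^{(2m-1)+2k}(2,1)$. Restricting the filtration to $\mc{V}'$ replaces the bottom piece $\mc{V}_{k-m}/\mc{V}_{k-m-1}\cong\mathbf{V}_{k-m}\oplus\mathbf{V}_m$ by $\mathbf{V}_{k-m}$ alone and leaves the remaining pieces unchanged, so by the three cases above the associated graded of $d|_{\mc{V}'}$ is injective, hence so is $d|_{\mc{V}'}$; therefore $\ker d$ maps injectively into $E_1^{(2m-1)+2k}(2,1)/\mc{V}'\cong V(2,1)$ and the cocycle space has dimension at most $2$. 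The main obstacle is the bottom band: one has to see precisely how the saturation of an exponent at $m$ collapses the target from $2V(2,1)$ to $V(2,1)$ and then check that the surviving copy $\mathbf{V}_{k-m}$ still maps isomorphically onto it; the rest is a transcription of Lemma \ref{lem3.8}.
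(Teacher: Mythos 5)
Your proof is correct and is essentially the paper's own argument in different notation: your codimension-two submodule $\mc{V}'=\bigoplus_{a\neq m}\mathbf{V}_a$ is exactly the paper's $\mc{W}=\bigoplus_a\mathbf{W}_a$ (with $\mathbf{W}_a=\mathbf{V}_{k+a-m}$), and the filtration by pairs $\mathbf{V}_a+\mathbf{V}_{k-a}$ with the singleton bottom band $\mathbf{V}_{k-m}$, the $3\times3$ block from Lemma \ref{lem3.8}, and the Schur-lemma treatment of the degenerate bands all coincide with the paper's steps. The only difference is that you make explicit the computation $d(v_{1,k-m})=x^m\o x^m\o x^{k-m}-x^m\o x^{k-m}\o x^m$ where the paper simply asserts $d(\mc{W}_0)\neq0$.
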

\begin{proof}
In the proof we use the method of Lemmas \ref{lem32} and \ref{lem3.8}.
We will show that the differential is injective on a subspace of codimension two, $\mc{W}$, using an associated graded
morphism. In the case $k=2m$, $\mc{W}=0$ and the injectivity is obvious. Let us define the $V(2,1)$ submodules $\mathbf{W}_a$,
$0\leq a\leq 2m-1-k$, by
$$ \mathbf{W}_a:\left\{\begin{array}{lll}
               w_{1,a} & = & x^{m-a}\ot 1\ot x^{k+a-m} G_{12}-x^{m-a}\ot x^{k+a-m}\ot 1 G_{13} \\
               w_{2,a} & = & x^{m-a}\ot 1\ot x^{k+a-m} G_{12}-x^{k+a-m}\ot x^{m-a}\ot 1 G_{23},
             \end{array}
       \right.
$$
next the increasing filtrations $\mc{W}_i$ and $\mc{N}_i$ for $0\leq i\leq \lfloor\frac{2m-k}{2}\rfloor$ by
\[\begin{array}{lll}
    \mc{W}_0 & =& \mathbf{W}_0,\\
    \mc{W}_i & = & \mathbf{W}_0\oplus (\mathop{\bigoplus}\limits_{1\leq a\leq i}(\mathbf{W}_a+\mathbf{W}_{2m-k-a})), \\
    \mc{W} &  = & \mc{W}_{\lfloor\frac{2m-k}{2}\rfloor},\\
    \mc{N}_{-1} & = & (2,1)-\mbox{isotypic component of }\mathop{\mathop\bigoplus
    \limits_{m\geq a\geq b\geq c\geq0}}\limits_{a+b+c=m+k}\mathbb{Q}(\mc{S}_3 x^a\o x^b\o x^c), \\
    \mc{N}_i & = & \mc{N}_{i-1}\bigoplus(2,1)-\mbox{isotypic component of }\mathbb{Q}(\mc{S}_3 x^m\o x^{m-i}\o x^{k-m+i}).
  \end{array}
\]
The filtrations are compatible with the differential and the symmetric structure. The associated graded differential
$\mc{W}_i\diagup\mc{W}_{i-1}\ra\mc{N}_i\diagup\mc{N}_{i-1}$ is injective:

1) for $i=0$, $\mc{W}_0\diagup\mc{W}_{-1}=\mc{W}_0$ is a simple module and $d(\mc{W}_0)\neq0$;

2) for $i\geq1$, the coordinates of $d(w_{1,i}),d(w_{2,i}),d(w_{1,2m-k-i})$
in the basis $(x^m\o x^{m-i}\o x^{k-m+i},x^m\o x^{k-m+i}\o x^{m-i},x^{m-i}\o x^{m}\o x^{k-m+i},\ldots)$
is given by the matrix in the proof of Lemma \ref{lem3.8};

3) in the special case of an even $k$ and $2i=2m-k$, the component $\mc{W}_i\diagup\mc{W}_{i-1}$ has dimension 2 and the differential
$\mc{W}_i\diagup\mc{W}_{i-1}\ra\mc{N}_i\diagup\mc{N}_{i-1}$ is non-zero, hence injective by Schur lemma.
\end{proof}
\begin{lem}\label{lem310}
For $m\leq k\leq 2m$, the space of cocycles in
$E_1^{(2m-1)+2k}(2,1)$ is two dimensional and it coincides with
the space of coboundaries.
\end{lem}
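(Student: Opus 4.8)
The plan is to squeeze the cocycle space between the bound of Lemma \ref{lem39} and the space of coboundaries. Write $Z$ for the space of cocycles and $B$ for the space of coboundaries in $E_1^{(2m-1)+2k}(2,1)$; always $B\subseteq Z$, and by Lemma \ref{lem39} $\dim Z\le 2$. So it is enough to prove $\dim B=2$: then $2=\dim B\le\dim Z\le 2$ gives $\dim Z=2$ and $Z=B$.

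First I would locate the coboundaries. Since $d$ raises the total degree by one and lowers the exterior degree by one, the space $E_1^{(2m-1)+2k}(2,1)$ receives coboundaries only from $E_2$, so $B=d\big(E_2^{\,2m-2+2k}(2,1)\big)$. By Proposition \ref{lem2.1}, for $m\le k\le 2m$ the module $E_2^{\,2m-2+2k}(2,1)$ is a single copy of the irreducible $V(2,1)$: the degree $2m-2+2k$ equals $2(2m-1)+2(k-m)$ when $m\le k\le\frac{3m}{2}$ and $2(3m-1)-2(2m-k)$ when $\frac{3m}{2}\le k\le 2m$, so it always falls in the admissible range $0\le 2j\le m$ of that table, where the multiplicity of $V(2,1)$ is one (and those of $V(3)$ and $V(1,1,1)$ are zero).

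The key step is to check that $d$ does not vanish on this copy of $V(2,1)$. Its canonical generator is $g=(x^{k-m}\o 1\o 1)\,G_{12}G_{13}$ (up to the $\mc{S}_3$-action), and using $d|_{H^*}=0$, the Leibniz rule and $d(G_{ij})=p_{ij}^*(\d)$ with $\d=\sum_{i=0}^m x^i\o x^{m-i}$, one finds that $d(g)$ is a linear combination of the canonical basis elements $(x^{k-m+i}\o x^{m-i}\o 1)G_{13}$ and $(x^{k-m+i}\o 1\o x^{m-i})G_{12}$ with $0\le i\le 2m-k$. Since $k\le 2m$ the summand with $i=0$ occurs, so the monomial $(x^{k-m}\o 1\o x^m)G_{12}$ appears in $d(g)$ with coefficient $-1$ and is cancelled by no other term (every other summand carries a strictly smaller power of $x$ in its last slot, or involves $G_{13}$ rather than $G_{12}$). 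Hence $d(g)\ne 0$; as $V(2,1)$ is irreducible and $d$ is $\mc{S}_3$-equivariant, $d\colon E_2^{\,2m-2+2k}(2,1)\rightarrowtail E_1^{(2m-1)+2k}(2,1)$ is injective, so $B$ is a copy of $V(2,1)$ and $\dim B=2$. With Lemma \ref{lem39} this gives $\dim Z=2$ and $Z=B$.

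The only delicate point is the last non-vanishing check; everything else is formal, using the $\mc{S}_3$-structure of Section \ref{section2} and Schur's lemma. It is instructive to contrast this with the $(3)$-component treated in Lemma \ref{lem35}: there $E_2^*(3)=0$, so the cocycles of $E_1^*(3)$ are \emph{not} coboundaries and survive in $H_1^*(3)$, whereas here they are all killed, which is why $H_1^*(2,1)$ will turn out to vanish.
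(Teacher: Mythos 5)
Your proof is correct and follows essentially the same route as the paper: combine the upper bound $\dim Z\le 2$ from Lemma \ref{lem39} with the fact that $d$ is non-zero on $E_2^{2m-2+2k}(2,1)\cong V(2,1)$, hence injective by Schur's lemma, so the coboundaries already fill a two-dimensional space. The only difference is that you spell out the non-vanishing of $d$ on the generator $(x^{k-m}\o1\o1)G_{12}G_{13}$ explicitly, whereas the paper simply asserts it; your computation of the surviving monomial $(x^{k-m}\o1\o x^{m})G_{12}$ is accurate.
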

\begin{proof}
This is a consequence of the previous lemma and of the fact that
the differential $d:E_{2}^{2(2m-1)+2j}(2,1)\cong V(2,1)\ra
E_{1}^{(4m-1)+2j}(2,1)$ is non-zero for $0\leq j\leq m,$ hence the
space of coboundaries has dimension two, by Schur lemma.
\end{proof}
In the next two propositions, the term corresponding to $\lfloor\frac{3m}{2}\rfloor$ and
the term corresponding to $\lceil\frac{3m}{2}\rceil$ should be taken only once for $m$ even.
\begin{prop}\label{prop311}
The Poincar\'{e} polynomial of the $(2,1)$ component of the
cohomology $H^*(F(\mathbb{C}P^m,3))$ is given by:
$$ \begin{array}{lll}
    P_{(F(\mathbb{C}P^m,3))(2,1)}(t,s) & = & \mathop{\sum}\limits_{k=1}^{m-1}2\mu_{2,1}(k)t^{2k}+
    \mathop{\sum}\limits_{k=m}^{\lfloor\frac{3m}{2}\rfloor}2(\mu_{2,1}(k)-k+m-1)t^{2k}+ \\
             & & +\mathop{\sum}\limits_{k=\lceil\frac{3m}{2}\rceil}^{2m-1}2(\mu_{2,1}(3m-k)-k+m-1)t^{2k}+\\
             & & +\mathop{\sum}\limits_{k=2m}^{3m-1}2(\mu_{2,1}(3m-k)-3m+k)t^{2k}=\\
             & = & 2t^2+4t^4+6t^6+10t^8+\ldots +4t^{6m-10}+2t^{6m-8}.
   \end{array}
$$
where the coefficients in the second expansion become stable for $m\geq5.$
\end{prop}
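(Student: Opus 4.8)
The plan is to read off the $(2,1)$-isotypic part of the \Kz model from Proposition \ref{lem2.1} as the three-term complex $E_2^*(2,1)\ra E_1^*(2,1)\ra E_0^*(2,1)$ and compute its cohomology. Since $\dim_{\mathbb{Q}}V(2,1)=2$, the dimension of the $(2,1)$-isotypic part of a bigraded component is twice the multiplicity listed in Proposition \ref{lem2.1}, so $\dim E_2^{2(2m-1)+2j}(2,1)=2$ for $0\leq j\leq m$, $\dim E_1^{(2m-1)+2k}(2,1)=2(\min\{k,2m-k\}+1)$ for $0\leq k\leq 2m$, and $\dim E_0^{2k}(2,1)=2\mu_{2,1}(k)$ when $2k\leq 3m$, $=2\mu_{2,1}(3m-k)$ when $2k\geq 3m$. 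First I would show the cohomology is concentrated in exterior degree $q=0$; then the two-variable Poincar\'{e} polynomial has no $s$-terms and is assembled degree by degree from $\dim H_0^{2k}(2,1)=\dim E_0^{2k}(2,1)-\mathrm{rk}\big(d\colon E_1^{2k-1}(2,1)\ra E_0^{2k}(2,1)\big)$.

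For the concentration, I would invoke the lemmas already proved. Each summand $E_2^{2(2m-1)+2j}(2,1)$ is a single copy of $V(2,1)$ on which $d$ is non-zero (observed in the proof of Lemma \ref{lem310}), hence injective by Schur's lemma, so $d$ is injective on all of $E_2^*(2,1)$ and $H_2^*(2,1)=0$. For $H_1$: in the exterior degrees $(2m-1)+2k$ with $0\leq k\leq m-1$ the map $d\colon E_1\ra E_0$ is injective by Lemma \ref{lem3.8}, so there are no cocycles; and for $m\leq k\leq 2m$ the cocycles coincide with the coboundaries by Lemma \ref{lem310}. Since $E_1^{(2m-1)+2k}(2,1)\neq 0$ only for $0\leq k\leq 2m$, these two ranges exhaust everything and $H_1^*(2,1)=0$.

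It then remains to compute $\mathrm{rk}\big(d\colon E_1^{2k-1}(2,1)\ra E_0^{2k}(2,1)\big)$ in three regimes, using that $E_1^{2k-1}(2,1)=E_1^{(2m-1)+2\ell}(2,1)$ with $\ell=k-m$. When $1\leq k\leq m-1$ the source vanishes (the lowest total degree occurring in $E_1^*(2,1)$ is $2m-1$), so $\dim H_0^{2k}(2,1)=2\mu_{2,1}(k)$, giving the first sum. When $m\leq k\leq 2m-1$, so $\ell\in[0,m-1]$, Lemma \ref{lem3.8} makes $d$ injective on $E_1^{(2m-1)+2\ell}(2,1)$, whose image thus has dimension $2(\ell+1)=2(k-m+1)$; subtracting this from $\dim E_0^{2k}(2,1)$ — which is $2\mu_{2,1}(k)$ for $2k\leq 3m$ and $2\mu_{2,1}(3m-k)$ for $2k\geq 3m$ — produces the two middle sums (the value $k=\frac{3m}{2}$, occurring when $m$ is even, lies in both ranges, but the two formulas agree there, so it is counted once, as in the remark preceding the proposition). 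When $2m\leq k\leq 3m-1$, so $\ell\in[m,2m-1]$, Lemma \ref{lem310} gives that the cocycle space in $E_1^{(2m-1)+2\ell}(2,1)$ is $2$-dimensional, hence the image of $d$ has dimension $\dim E_1^{(2m-1)+2\ell}(2,1)-2=2(3m-k+1)-2=2(3m-k)$; since $2k\geq 4m>3m$ we have $\dim E_0^{2k}(2,1)=2\mu_{2,1}(3m-k)$, and the difference gives the last sum.

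Adding the four pieces yields the stated closed form, and the numerical expansion $2t^2+4t^4+6t^6+10t^8+\ldots+4t^{6m-10}+2t^{6m-8}$ follows by substituting the values of $\mu_{2,1}$ recorded in the Appendix (e.g. $\mu_{2,1}(1)=1,\mu_{2,1}(2)=2,\mu_{2,1}(3)=3,\mu_{2,1}(4)=5$ at the bottom, and $\mu_{2,1}(4)-4=1,\mu_{2,1}(5)-5=2$ at the top), where one also finds that the displayed low-degree coefficients have reached their limiting values once $m\geq 5$. The only genuinely non-trivial input — injectivity of $d$ below the middle of the trapezoid and two-dimensionality of the cocycle spaces above it — is supplied by Lemmas \ref{lem3.8}, \ref{lem39} and \ref{lem310}, already in hand; the main obstacle in the present proof is purely the bookkeeping across the three regimes, together with the parity point at $k=\frac{3m}{2}$ and the stabilization thresholds quoted from the Appendix.
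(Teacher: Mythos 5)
Your proposal is correct and follows the paper's own route: the paper's proof is exactly this dimension count, reading the $(2,1)$-multiplicities from Proposition \ref{lem2.1} and feeding in Lemmas \ref{lem3.8}, \ref{lem39} and \ref{lem310} to get injectivity of $d$ on $E_1(2,1)$ below the middle, the two-dimensional cocycle (equal to coboundary) spaces above it, and the vanishing of $H_1(2,1)$ and $H_2(2,1)$. Your bookkeeping in the four ranges, the treatment of $k=\tfrac{3m}{2}$ for even $m$, and the numerical values of $\mu_{2,1}$ all check out.
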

\begin{proof}
This is a direct consequence of dimensions counting for
$E_*^*(2,1)$ in Section 2 and the previous three Lemmas.
The coefficients in the second expansion are computed using Proposition \ref{lem2.1} and the Appendix.
\end{proof}
\begin{rem}
For $m=2,3,4$ we have:
\[\begin{array}{cl}
    P_{(F(\mathbb{C}P^2,3))(2,1)}(t,s)= & 2t^2+2t^4  \\
    P_{(F(\mathbb{C}P^3,3))(2,1)}(t,s)= & 2t^2+4t^4+4t^6+4t^8+2t^{10}\\
    P_{(F(\mathbb{C}P^4,3))(2,1)}(t,s)= & 2t^2+4t^4+6t^6+8t^8+8t^{10}+6t^{12}+4t^{14}+2t^{16}
  \end{array}
\]
\end{rem}
The contribution to the cohomology of the $(1,1,1)$ component is
obvious from the table in Lemma \ref{lem2.1}:
\begin{prop}\label{prop312}
The Poincar\'{e} polynomial for the $(1,1,1)$ component of the
cohomology $H^*(F(\mathbb{C}P^m,3))$ is given by:
$$\begin{array}{lll}
 P_{(F(\mathbb{C}P^m,3))(1,1,1)}(t,s) & = & \mathop{\sum}\limits_{k=3}^{\lfloor\frac{3m}{2}\rfloor}\mu_{1,1,1}(k)t^{2k}+
\mathop{\sum}\limits_{k=\lceil\frac{3m}{2}\rceil}^{3m-3}\mu_{1,1,1}(3m-k)t^{2k}=\\
                                      & = & t^6+2t^8+\ldots +2t^{6m-8}+t^{6m-6}.
\end{array}$$
\end{prop}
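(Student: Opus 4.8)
The plan is to observe that the $(1,1,1)$-isotypical part of the \Kz model is concentrated in exterior degree zero, so that its differential is forced to vanish and the cohomology equals the module itself; the Poincar\'{e} polynomial is then just the generating function of the multiplicities $\mu_{1,1,1}(k)$ listed in Proposition \ref{lem2.1}.

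First I would read off from the last column of the table in Proposition \ref{lem2.1} that the irreducible $V(1,1,1)$ appears in the bigraded pieces $E_q^k$ only for $q=0$ (the rows $E_2^*$ and $E_1^*$ contribute nothing); equivalently, in the third diagram of Section \ref{section1} every bullet and dot of $E_*^*(1,1,1)$ lies on the line $q=0$. Hence the summand $(E_*^*(1,1,1),d_{1,1,1})$ of the isotypical decomposition $(E_*^*,d)\cong\bigoplus_{\lambda\vdash 3}(E_*^*(\lambda),d_{\lambda})$ lives entirely in exterior degree $0$. Since $d$ lowers the exterior degree, $d\colon E_q^k\ra E_{q-1}^{k+1}$, the map $d_{1,1,1}$ sends $E_0^*(1,1,1)$ into $E_{-1}^*=0$, while nothing maps into $E_0^*(1,1,1)$ because $E_1^*(1,1,1)=0$; therefore $d_{1,1,1}\equiv 0$ and $H_*^*(F(\mathbb{C}P^m,3))(1,1,1)\cong E_*^*(1,1,1)$. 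In particular this part of the cohomology sits in even total degree and in exterior degree $0$ --- so the variable $s$ does not appear --- and $\dim H_0^{2k}(1,1,1)=\mu_{1,1,1}(k)$ for every $k$.

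It then remains to sum $\sum_k\mu_{1,1,1}(k)\,t^{2k}$. By the $\mc{S}_3$-equivariant Poincar\'{e} duality $E_0^{2k}\cong E_0^{6m-2k}$ recorded in Proposition \ref{lem2.1} it is enough to let $k$ run from $3$ to $\lfloor\frac{3m}{2}\rfloor$, with coefficient $\mu_{1,1,1}(k)=P_{3,\leq m-2}(k-3)$, and then to reflect, using coefficient $\mu_{1,1,1}(3m-k)$ for $\lceil\frac{3m}{2}\rceil\leq k\leq 3m-3$ and counting the central term once when $m$ is even (the convention fixed just before Proposition \ref{prop311}); the lower cut-off $k\geq3$ is forced because $P_{3,\leq m-2}(k-3)=0$ for $k<3$, and the upper one by duality. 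This reproduces the displayed formula, and the explicit expansion follows by substituting the small-argument values $P_{3,\leq m-2}(0)=1$, $P_{3,\leq m-2}(1)=1$, $P_{3,\leq m-2}(2)=2$, and so on (these coincide with $P_3(j)$ as long as $j<m-2$), which are tabulated in the Appendix. There is no substantial obstacle here; the only point needing a little care is the palindromic bookkeeping at the centre of the trapezoid for even $m$, which the ``counted once'' convention handles.
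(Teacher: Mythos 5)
Your argument is correct and is exactly the paper's: the paper's entire proof is the remark that the $(1,1,1)$ contribution is ``obvious from the table in Lemma \ref{lem2.1}'', i.e.\ $V(1,1,1)$ occurs only in exterior degree $0$, so $d_{1,1,1}=0$, the cohomology is $E_0^*(1,1,1)$ itself, and its dimensions are the multiplicities $\mu_{1,1,1}(k)$ folded by Poincar\'e duality with the central term counted once. One small remark: the values $P_{3,\leq m-2}(0)=P_{3,\leq m-2}(1)=1$ and $P_{3,\leq m-2}(2)=2$ that you correctly quote yield the expansion $t^6+t^8+2t^{10}+\ldots+t^{6m-8}+t^{6m-6}$, so the coefficient $2$ attached to $t^8$ and $t^{6m-8}$ in the paper's second displayed expansion is a typo in the statement rather than something your proof fails to reproduce.
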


\bigskip
\noindent {\em Proof of Theorem \ref{thm1}}. The Poincar\'{e}
polynomial of $E_q^*$ is
$$ P_{E_q^*}(t,s)=\a_q(1+t^2+\ldots+t^{2m})^{3-q}t^{q(2m-1)}s^q, $$
where $\mathop\sum\limits_{i\geq0}\a_it^i=(1+t)(1+2t)$ is the
Poincar\'e polynomial of the Arnold algebra $\mc{A}(3)\cong
H^*(F(\mathbb{C},3))$, see \cite{A}. From the previous Lemmas the Poincar\'e
polynomials of the spaces of cocycles in $E_1^*(3)$ and
$E_1^*(2,1)$ are given by:
$$ \begin{array}{lll}
     P_{Z_1^*(3)}(t,s)   & = & s(t^{4m-1}+t^{4m-3}+\ldots+t^{6m-3}), \\
     P_{Z_1^*(2,1)}(t,s) & = & 2s(t^{4m-1}+t^{4m-3}+\ldots+t^{6m-1}).
   \end{array}
$$
Thus we have
$$ \begin{array}{lll}
    P_{F(\mathbb{C}P^m,3)}(t,s)& = & (1+t^2+t^4+\ldots+ t^{2m})^3-[3(1+t^2+t^4+\ldots+ t^{2m})^2t^{2m}- \\
                               &   & -(t^{4m}+t^{4m+2}+\ldots+ t^{6m-2})-2(t^{4m}+t^{4m+2}+\ldots+ t^{6m})]\\
                               &   & +s(t^{4m-1}+t^{4m+1}+\ldots+ t^{6m-3}),
\end{array}
$$
and a straightforward simplification gives the result.
\hfill$\Box$

In Section \ref{section1} the $(k,q)$ supports of $E_*^*$ and $H_*^*$ are drawn
for each of the three types of $\S3$-modules and for the stable cases: $m\geq5$.

\section{Algebra structure of the cohomology}\label{section4}
The configuration space $F(\mathbb{C}P^m,3)$ is the total space in two natural fibrations:
$$\begin{array}{rllll}
\overset{\circ\circ}{\mathbb{C}P}\,^m  &  \hookrightarrow & F(\mathbb{C}P^m,3) & \ra & F(\mathbb{C}P^m,2),\\
 F(\overset\circ{\mathbb{C}P}\,^m,2) &  \hookrightarrow & F(\mathbb{C}P^m,3) & \ra & \mathbb{C}P^m,
\end{array} $$
see \cite{FN}.
If $X$ is a connected manifold, $\overset\circ{X}$ and $\overset{\circ\circ}{X}$ denote the
spaces $X\setminus\{\mbox{one point}\}$ and $X\setminus\{\mbox{two points}\}$ respectively.
For $m=3$ and $m=4$ the spectral sequences associated to these fibrations are used in
\cite{So} to reduce long computations with the \Kz model. Using the Poincar\'{e} polynomial of
$F(\mathbb{C}P^m,3)$, Theorem \ref{thm1}, we can describe the structure of the Serre spectral sequences
corresponding to these fibrations, proving some conjectures from \cite{So}. The terms of spectral
sequences will be written in bold characters $\mathbf{E}^{*,*}_* $. We will use the notation
$r_m(x,y)=x^m+x^{m-1}y+\ldots+y^m$ and $C_m(t)=r_m(1,t^2)=1+t^2+\ldots+t^{2m}.$
The cohomology of the fibers are given in the next lemma:
\begin{lem}

a) The cohomology algebra of $\overset{\circ\circ}{\mathbb{C}P}\,^m $ has the presentation:
$$ H^*(\overset{\circ\circ}{\mathbb{C}P}\,^m)\cong \mathbb{Q}\lan y,z\mid y^m,yz\ran  $$
with $|y|=2,|z|=2m-1$. Its Poincar\'{e} polynomial is given by
$$ P_{\overset{\circ\circ}{\mathbb{C}P}\,^m }(t)=C_{m-1}(t)+t^{2m-1}.$$
b) The cohomology algebra of $F(\overset\circ{\mathbb{C}P}\,^m,2) $ has the presentation:
$$ H^*(F(\overset\circ{\mathbb{C}P}\,^m,2))\cong \mathbb{C}\lan y,z,u\mid y^m,z^m,
r_m(y,z),yu,zu\ran  $$
with $|y|=|z|=2,|u|=4m-3$. Its Poincar\'{e} polynomial is given by
$$ \begin{array}{lll}
P_{F(\overset\circ{\mathbb{C}P}\,^m,2)}(t) & = & 1+2t^2+3t^4+\ldots +mt^{2m-2}+(m-2)t^{2m}+\\
                                           &   & +(m-3)t^{2m+2}+\ldots +2t^{4m-8}+t^{4m-6}+t^{4m-3}.
\end{array} $$
\end{lem}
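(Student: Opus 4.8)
The plan is to compute the cohomology of the two fibers directly, since both are complements of finite point sets in manifolds that are themselves understood. For part (a), the space $\overset{\circ\circ}{\mathbb{C}P}\,^m = \mathbb{C}P^m\setminus\{P,Q\}$. I would use the Kri\v{z}-type model, or more elementarily the Mayer--Vietoris/Gysin approach: removing a point from $\mathbb{C}P^m$ kills the top class $x^m$ and creates a class in degree $2m-1$ (the link of the point is $S^{2m-1}$); removing a second point does not change this, since the second point lies in the affine chart $\mathbb{C}^m\simeq\mathbb{C}P^m\setminus\{P\}$ and $\mathbb{C}^m\setminus\{Q\}\simeq S^{2m-1}$. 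So $H^*(\overset{\circ\circ}{\mathbb{C}P}\,^m)$ has basis $1,y,\ldots,y^{m-1}$ in even degrees together with one class $z$ in degree $2m-1$; the relations $y^m=0$ and $yz=0$ follow because $yz$ would sit in degree $2m+1$, which is above the cohomological dimension of a $(2m-1)$-dimensional complex (a punctured affine space deformation retracts onto $S^{2m-1}$). The Poincar\'e polynomial $C_{m-1}(t)+t^{2m-1}$ is then immediate.

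\textbf{For part (b)}, I would realize $F(\overset\circ{\mathbb{C}P}\,^m,2)$ as the total space of a fibration with base $\overset\circ{\mathbb{C}P}\,^m\simeq \mathbb{C}^m$ (contractible) and fiber $\overset{\circ\circ}{\mathbb{C}P}\,^m$, so that $F(\overset\circ{\mathbb{C}P}\,^m,2)\simeq\overset{\circ\circ}{\mathbb{C}P}\,^m$ up to homotopy --- but that collapses too much information and loses the ring structure the statement wants, so instead I would use the \Kz model $E_*^*(\mathbb{C}P^m,2)$ localized appropriately, or better, exploit the (known) presentation $H^*(F(\mathbb{C}P^m,2))\cong\langle y,z\mid y^{m+1},z^{m+1},r_m(y,z)\rangle\otimes\Lambda(G)/({\rm relations})$ and restrict to the open subspace where the first point is fixed. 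Concretely $F(\overset\circ{\mathbb{C}P}\,^m,2)=\{(p,q)\mid p\neq q,\ q\neq Q_0\}$ for a fixed basepoint $Q_0$; projecting to the first coordinate gives a fibration over $\mathbb{C}P^m$ with fiber $\overset{\circ\circ}{\mathbb{C}P}\,^m$, and the Serre spectral sequence has $\mathbf{E}_2^{*,*}=H^*(\mathbb{C}P^m)\otimes H^*(\overset{\circ\circ}{\mathbb{C}P}\,^m)$, i.e. generated by $y$ (pulled back from the base, degree $2$), and $z'$ (the fiber class, degree $2m-1$), modulo $y^{m+1}$ on the base side; the class $z$ in the fiber must transgress or survive --- by dimension and the known Betti numbers of $F(\mathbb{C}P^m,2)$ one sees the differential $d_{2m}$ sends the fiber generator to $r_m(y,z)$, producing the stated relation, and $u$ of degree $4m-3$ is the surviving product-type class with $yu=zu=0$ forced by degree reasons (both land above the top dimension $4m-2$ of this $(2m-1)+(2m-1)$-complex... more precisely above $4m-3$ except for one slot, which a separate check rules out).

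\textbf{Then} the Poincar\'e polynomial in (b) is obtained by multiplying out the presentation: $H^*$ has a "polynomial-truncation" part spanned by monomials $y^iz^j$ subject to $y^m=z^m=0$ and $r_m(y,z)=0$, whose Hilbert series is $\dfrac{(1-t^{2m})^2}{(1-t^2)^2}\cdot\dfrac{1}{\text{(correction for }r_m)}$ --- concretely it is $1+2t^2+3t^4+\cdots+mt^{2m-2}+(m-2)t^{2m}+(m-3)t^{2m+2}+\cdots+t^{4m-6}$, the standard computation for the complement of the diagonal in $(\mathbb{C}P^m)^2$ with one point removed --- plus a single class $u$ in degree $4m-3$ contributing $t^{4m-3}$. \textbf{The main obstacle} is pinning down the multiplicative relations $yu=zu=0$ rigorously rather than just the additive structure: one must verify that the would-be products $yu,zu$ in degrees $4m-1,4m-1$ actually vanish, which I would do either by a direct \Kz-model computation (expressing $u$ as an explicit cocycle $G$-monomial times a scalar and multiplying) or by observing that $F(\overset\circ{\mathbb{C}P}\,^m,2)$ has the homotopy type of a CW-complex of dimension $\le 4m-3$ with no cells in the relevant bidegree, forcing the product to be zero. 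The degree and rank bookkeeping for the even part is routine and I would not carry it out in full.
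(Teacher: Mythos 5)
Part (a) of your proposal is essentially sound and runs parallel to the paper's argument (the paper uses $\overset\circ{\mathbb{C}P}\,^m\simeq\mathbb{C}P^{m-1}$ plus Mayer--Vietoris for the decomposition $\overset\circ{\mathbb{C}P}\,^m=\overset{\circ\circ}{\mathbb{C}P}\,^m\cup D^{2m}$); note only that it is the \emph{second} puncture, not the first, that creates the class in degree $2m-1$, since $\mathbb{C}P^m\setminus\{P\}\simeq\mathbb{C}P^{m-1}$ has no odd cohomology.

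Part (b), however, contains genuine errors. First, $\overset\circ{\mathbb{C}P}\,^m$ is not contractible: it deformation retracts onto the hyperplane at infinity, so it is $\simeq\mathbb{C}P^{m-1}$, not $\mathbb{C}^m$. Second, $F(\overset\circ{\mathbb{C}P}\,^m,2)$ is $\{(p,q)\mid p\neq q,\ p\neq Q_0,\ q\neq Q_0\}$; by dropping the condition $p\neq Q_0$ you set up a Fadell--Neuwirth fibration over the wrong base ($\mathbb{C}P^m$ instead of $\overset\circ{\mathbb{C}P}\,^m\simeq\mathbb{C}P^{m-1}$), i.e.\ you compute the cohomology of a different space (for $m=1$ your space is contractible while $F(\mathbb{C},2)\simeq S^1$). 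Third, with the correct fibration the base has cohomological dimension $2m-2$, so $\mathbf{E}^{2m,0}=0$ and $d_{2m}$ vanishes identically; the differential that actually kills the surplus classes is $d_2$, carrying the degree-$(2m-1)$ fiber class to (a multiple of) $y\otimes y_{\mathrm{fib}}^{m-1}$, exactly as in Proposition \ref{prop4.3} of the paper. Even then, you would need an independent reason why this $d_2$ is nonzero: your appeal to ``the known Betti numbers of $F(\mathbb{C}P^m,2)$'' concerns a different space and makes the argument circular for the space whose Betti numbers you are trying to compute. Finally, a spectral sequence over $\mathbb{C}P^{m-1}$ does not by itself yield the stated ring presentation (the relation $r_m(y,z)=0$, or the vanishing $yu=zu=0$). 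The paper sidesteps all of this by using the punctured \Kz model of \cite{BMP} (with complex coefficients): there $x^m=0$, so $H^*(\overset\circ{\mathbb{C}P}\,^m)\cong\mathbb{C}[x]/\lan x^m\ran$, the class $u$ is represented by the cocycle $x^{m-1}\otimes 1\, G_{12}$, the relations $y^m=z^m=0$ and $yu=zu=0$ are immediate from $x^m=0$, and $r_m(y,z)=dG_{12}$ is a coboundary; the Poincar\'e polynomial then falls out of the presentation. If you want to keep a fibration argument you must fix the base, identify $d_2$ as the operative differential, and supply an independent computation (e.g.\ from the model) that it is nonzero.
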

\begin{proof}
a) Obviously $\overset\circ{\mathbb{C}P}\,^m\simeq \mathbb{C}P^{m-1} $ and the Mayer-Vietoris sequence for
the decomposition $\overset\circ{\mathbb{C}P}\,^m=\overset{\circ\circ}{\mathbb{C}P}\,^m\cup D^{2m}$ gives
the Betti numbers and the algebra morphism
$H^*(\overset\circ{\mathbb{C}P}\,^m)\to H^*(\overset{\circ\circ}{\mathbb{C}P}\,^m)$, injective for
$*\leq 2m-2$.

b) For these computation we use the version of the \Kz model for a punctured complex projective manifold
$\overset\circ{X}$ introduced in \cite{BMP} ($X$ is simply connected and the coefficients should be complex):
the fundamental class $\omega$ becomes 0.
Denote by $y,z$ and $u$ the cohomology classes of $x\o 1,1\o x$ and $x^{m-1}\o1 G_{12}$ respectively. The equality
$x^m=0$ in the model implies $y^m=z^m=0$ and also $yu=zu=0$; the relation $r_m(y,z)=0$ is a
consequence of $dG_{12}=y^{m-1}z+\ldots+yz^{m-1}$ and of the relations $y^m=z^m=0$. A basis for the cohomology is
$$ \{ 1;y,z;y^2,yz,z^2;...;y^{m-1},y^{m-2}z,...,z^{m-1};y^{m-1}z,...,y^2z^{m-2};...;y^{m-1}z^{m-2};u\}. $$
\end{proof}
We show that the two spectral sequences have a nontrivial differential at a unique place: for the first
fibration, the ``first'' differential, $d_2$, and for the second one, the ``last'' differential,
$d_{2m}$, are non zero.
\begin{prop}\label{prop4.3}
The Serre spectral sequence of the fibration
$$ \overset{\circ\circ}{\mathbb{C}P}\,^m  \hookrightarrow F(\mathbb{C}P^m,3) \ra F(\mathbb{C}P^m,2)$$
collapses at $\mathbf{E}^{*,*}_3$.
\end{prop}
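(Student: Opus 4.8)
The strategy is a dimension count: the total dimension of $H^*(F(\mathbb{C}P^m,3))$ is known from Theorem \ref{thm1}, and we compare it with the dimension of $\mathbf{E}_2^{*,*}$ and of $\mathbf{E}_3^{*,*}$ for this fibration. First I would write down $\mathbf{E}_2^{*,*}=H^*(F(\mathbb{C}P^m,2))\otimes H^*(\overset{\circ\circ}{\mathbb{C}P}\,^m)$; the base cohomology $H^*(F(\mathbb{C}P^m,2))$ has Poincar\'e polynomial obtainable either from the literature (\cite{So}) or by specializing the techniques of the previous sections, and the fiber Poincar\'e polynomial is $C_{m-1}(t)+t^{2m-1}$ by part a) of the Lemma. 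Multiplying these gives $P_{\mathbf{E}_2}(t)$, and comparing with $P_{F(\mathbb{C}P^m,3)}(t)=C_{m-1}(t)^3+t^{4m-1}C_{m-1}(t)$ (set $s=1$ in Theorem \ref{thm1}) shows a strict discrepancy, so \emph{some} differential must be nonzero.

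\textbf{Locating the differential.} The key structural observation is that the fiber class $z\in H^{2m-1}(\overset{\circ\circ}{\mathbb{C}P}\,^m)$ is the only odd-degree generator, so the only differentials that can act nontrivially originate on $z$ (and its products with base classes). Since $z$ sits in fiber degree $2m-1$, the only possibly nonzero differential on it is $d_{2m}$ — \emph{except} that $z$ is transgressive only if the lower differentials vanish on it, and here the point is different: $z$ is \emph{not} transgressive because the fiber subalgebra $H^*(\overset{\circ\circ}{\mathbb{C}P}\,^m)=\mathbb{Q}\langle y,z\mid y^m,yz\rangle$ is not free. I would instead argue directly: $z$ has degree $2m-1$, and its image under $d_2$ lands in $H^2(F(\mathbb{C}P^m,2))\otimes H^{2m-2}(\text{fiber})$; I claim $d_2(z)\neq 0$, forced by the dimension count, because if $d_2(z)=0$ then $z$ survives to $\mathbf{E}_3$ and, by the multiplicative structure ($y\cdot z=0$ in the fiber kills most potential later differentials), the spectral sequence would have to collapse at $\mathbf{E}_2$, contradicting $P_{\mathbf{E}_2}(t)\neq P_{F(\mathbb{C}P^m,3)}(t)$.

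\textbf{Showing $\mathbf{E}_3=\mathbf{E}_\infty$.} Once $d_2(z)\neq 0$ is established, I would show $d_2$ is the \emph{only} nonzero differential. The differential $d_2$ is determined by $d_2(z)$ and the Leibniz rule, since $y$ and the base classes are permanent cocycles for degree reasons ($H^*(F(\mathbb{C}P^m,2))$ is generated in degree $2$, and $d_2$ on a degree-$2$ fiber or base class would land in negative fiber degree or in $H^3$ of the base, which vanishes). So $\mathbf{E}_3^{*,*}=H^*(\mathbf{E}_2^{*,*},d_2)$, and I would verify by a final Poincar\'e-polynomial comparison that $\dim \mathbf{E}_3^{*,*}=\dim H^*(F(\mathbb{C}P^m,3))$; since $\mathbf{E}_3$ always surjects onto (is a subquotient containing) $\mathbf{E}_\infty=\mathrm{gr}\,H^*(F(\mathbb{C}P^m,3))$, equality of total dimensions forces $\mathbf{E}_3=\mathbf{E}_\infty$, i.e.\ collapse at $\mathbf{E}_3$.

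\textbf{Main obstacle.} The delicate point is the explicit identification of $d_2(z)$: one must check it is exactly the element that makes the cohomology of $(\mathbf{E}_2,d_2)$ have the right Poincar\'e polynomial — in effect $d_2(z)$ should be (a nonzero multiple of) the class $r_m(y_1,y_2)/(y_1-y_2)$-type expression in the base, reflecting the relation $r_m(\a_i,\a_j)$ from Theorem \ref{thm3}. Pinning this down requires either a direct computation in the \Kz model (tracking where $x^{m-1}\o 1\,G_{12}$-type generators go) or an indirect argument that the dimension count leaves no other possibility; I would lean on the latter, invoking Theorem \ref{thm1} to make the bookkeeping rigorous.
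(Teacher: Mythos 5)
Your overall strategy --- a global dimension count to detect a nonzero differential, locating it on the fiber class $z$, and a final count to force $\mathbf{E}_3=\mathbf{E}_\infty$ --- is the same as the paper's, and your argument that $d_2(1\otimes z)\neq 0$ (if it vanished, Leibniz together with $yz=0$ and the parity of the lower rectangle would force total collapse at $\mathbf{E}_2$, contradicting $P_{\mathbf{E}_2}\neq P_{F(\mathbb{C}P^m,3)}$) is a workable variant of the paper's, which instead reads off $\beta_{6m-5}=1$ against the two-dimensional group $\mathbf{E}_2^{4m-4,2m-1}$. (Your first instinct, that only $d_{2m}$ could act on $z$, is wrong --- $d_r(1\otimes z)$ can a priori be nonzero for every even $r$ with $2\le r\le 2m$ --- but you correctly abandon it.)

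The gap is in the last step. Knowing only that $d_2(1\otimes z)=(\lambda\, x\otimes 1+\mu\, 1\otimes x)\otimes y^{m-1}$ with $(\lambda,\mu)\neq(0,0)$ does not determine $\dim\mathbf{E}_3$, and the ``indirect argument that the dimension count leaves no other possibility'' cannot close this: the inequality $\dim\mathbf{E}_3\geq\dim\mathbf{E}_\infty=\dim H^*(F(\mathbb{C}P^m,3))$ only bounds $\mathrm{rank}\, d_2$ from \emph{above}. It is consistent with everything you have established that $d_2$ has less than the required total rank $m^2$ and that the deficit is made up by a later differential --- say $d_4$, whose Leibniz obstruction $(1\otimes y)\cdot d_4(1\otimes z)=0$ is no longer effective once $\mathrm{im}\, d_2$ has been divided out of the $y^{m-1}$-row --- in which case the sequence would collapse at $\mathbf{E}_5$, not $\mathbf{E}_3$. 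What is needed, and what the paper supplies, is a \emph{lower} bound on $\mathrm{rank}\, d_2$: multiplication by an arbitrary nonzero linear form $\lambda\, x\otimes 1+\mu\, 1\otimes x$ on $H^*(F(\mathbb{C}P^m,2))$ is injective on $H^i$ for $i\le 2m-2$ and surjective onto $H^{i+2}$ for $i\ge 2m-2$, proved by exhibiting in the monomial bases a bidiagonal matrix containing a triangular block with $\lambda$ (or $\mu$) on the diagonal. This works uniformly in $(\lambda,\mu)$, so --- contrary to your last paragraph --- one never has to identify $d_2(z)$ exactly (note also that $d_2(z)$ lies in $H^2(\mathrm{base})\otimes H^{2m-2}(\mathrm{fiber})$, not in $H^{2m}$ of the base, so the ``$r_m/(\alpha_i-\alpha_j)$'' guess is off by degree); but the rank computation itself cannot be skipped. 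With it, $P_{\mathbf{E}_3}=C_{m-1}^3+t^{4m-1}C_{m-1}=P_{F(\mathbb{C}P^m,3)}$ and the proof closes.
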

\begin{proof}
The cohomology algebra of the base is a simple consequence of the \Kz model (one can see \cite{So}, Theorem 1,2): its
Poincar\'{e} polynomial is:
$$ P_{F(\mathbb{C}P^m,2)}(t)=C_{m-1}(t)C_m(t).$$
All the differentials in the lower rectangle $(4m-2)\times(2m-2)$ are zero for degree reason. From
\ref{thm1} the $(6m-5)^{th}$ Betti number of $F(\mathbb{C}P^m,3)$ equals 1, therefore
$\mathbf{E}^{4m-4,2m-1}_2$ should contain a cocycle, and this implies that
$$ d_2(1\o z)=(\lambda x\o 1+\mu 1\o x)\o y^{m-1}, $$
where at least one of the complex numbers $\lambda,\mu$ is non zero. As a consequence, the differential
$$ d_2:\mathbf{E}^{i,2m-1}_2\to \mathbf{E}^{i+2,2m-2}_2 $$
\noindent is injective for $i=0,\ldots,2m-2$ and surjective for $i=2m-2,\ldots,4m-2$ because the matrix of
$d_2$ with respect to the bases $\{ x^j\o x^{i-j}\o z\}$ and  $\{ x^j\o x^{i-j}\o y^{m-1}\}$ contains two
triangular blocks, at least one of maximal rank:
$$ \begin{array}{ccc} \left( \begin{array}{lll} \lambda &       & 0 \\
                                 & \dots &   \\
                               * &       & \lambda
\end{array} \right) &  \mbox{ and } &
\left( \begin{array}{lll}  \mu &       & * \\
                             & \dots &   \\
                           0 &       & \mu
   \end{array} \right).
\end{array} $$

\begin{center}
\begin{picture}(100,120)
\put(-70,80){$\mathbf{E} _2^{*,*}:$}
\put(0,10){ \put(0,0){\line(1,0){130}} \put(0,0){\line(0,1){100}}
 \multiput(-2,-3)(20,0){3}{$\bullet$}  \put(60,-5){$\ldots$}
\multiput(80,-3)(20,0){2}{$\bullet$} \multiput(0,0)(20,0){2}{\multiput(78,60)(0,10){2}{$\bullet$}}
 \multiput(-2,-3)(0,20){3}{$\bullet$} \put(55,65){$\ldots$}
\multiput(-5,43)(0,5){3}{$\cdot$}
\multiput(-2,60)(0,10){2}{$\bullet$} \multiput(-2,60)(20,0){3}{$\bullet$}  \multiput(-2,70)(20,0){3}{$\bullet$}
\begin{scriptsize}\tiny \put(-10,17){$y$} \put(-10,37){$y^2$}  \put(-20,59){$y^{m-1}$} \put(-10,71){$z$}
 \put(7,-10){\tiny$[x\o1]$} \put(7,-20){$[1\o x]$}  \put(37,-10){$[x^2\o1]$}  \put(37,-20){$[x\o x]$}
 \put(37,-30){$[1\o x^2]$} \put(90,-10){$[x^m\o x^{m-1}]$} \put(8,70){$d_2$}
\end{scriptsize}
\multiput(0,73)(20,0){2}{\vector(2,-1){17}}}  \put(79,83){\vector(2,-1){17}}
\end{picture}
\end{center}
\bigskip

\bigskip

Computing the Poincar\'{e} polynomial of the term $\mathbf{E}^{*,*}_3$ we find
$$ \begin{array}{lll}
P_{\mathbf{E}^{*,*}_3}(t) & = & C_{m-2}(t)C_{m-1}(t)C_m(t)+t^{2m-2}C_{m-1}(t)+t^{4m-1}C_{m-1}(t)=\\
                         & = & C_{m-1}^3+t^{4m-1}C_{m-1}(t),
   \end{array}
$$
\noindent the same with $P_{F(\mathbb{C}P^m,3)}(t)$.
\end{proof}
\begin{prop}
The Serre spectral sequence of the fibration
$$ F(\overset\circ{\mathbb{C}P}\,^m,2)\hookrightarrow F(\mathbb{C}P^m,3)\ra \mathbb{C}P^m $$
has a unique non-zero differential $d_{2m}:\mathbf{E}^{0,4m-3}_{2m}\to\mathbf{E}^{2m,2m-2}_{2m} $.
\end{prop}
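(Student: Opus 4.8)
The plan is to compare, degree by degree, the total dimension recorded on $\mathbf{E}^{*,*}_2$ of this fibration with the Betti numbers of $F(\mathbb{C}P^m,3)$ furnished by Theorem \ref{thm1}, and to show that the deficit forces exactly one non-zero differential, which by bidegree reasons must be $d_{2m}\colon\mathbf{E}^{0,4m-3}_{2m}\to\mathbf{E}^{2m,2m-2}_{2m}$. Since the base $\mathbb{C}P^m$ has cohomology concentrated in even degrees $0,2,\ldots,2m$, the $\mathbf{E}_2$-page is
$$\mathbf{E}^{p,q}_2\cong H^p(\mathbb{C}P^m)\otimes H^q(F(\overset\circ{\mathbb{C}P}\,^m,2)),$$
with $H^*(F(\overset\circ{\mathbb{C}P}\,^m,2))$ given by the preceding lemma. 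In particular the columns are $p=0,2,\ldots,2m$ and the rows are supported in $q\le 4m-3$, with $q=4m-3$ a single one-dimensional row (generated by the class $u$). The Poincar\'e polynomial of $\mathbf{E}_2^{*,*}$ is therefore $C_m(t)\cdot P_{F(\overset\circ{\mathbb{C}P}\,^m,2)}(t)$, whereas Theorem \ref{thm1} gives $P_{F(\mathbb{C}P^m,3)}(t)=C_{m-1}^3(t)+t^{4m-1}C_{m-1}(t)$; the difference of these two polynomials will have non-negative coefficients (a necessary condition for a spectral sequence) and must be accounted for by differentials.

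Next I would localize where cancellation can occur. For bidegree reasons every $d_r$ has bidegree $(r,1-r)$ with $r$ even (odd-column entries vanish), so the only differentials that can be non-zero leave a class of odd total degree; the only odd total degrees present on $\mathbf{E}_2$ come from the top row $q=4m-3$, i.e. from the classes $x^j\otimes u$ of total degree $2j+4m-3$. A class $x^j\otimes u$ can only map, via $d_r$, into the even row $q=4m-3-r+1=4m-2-r$; for the target to be non-zero we need $4m-2-r\le 4m-3$ (automatic) and, crucially, the target row must actually be occupied, which by the lemma's basis forces $4m-2-r\in\{0,2,\ldots,2m\}$, i.e.\ $r$ even and $2m-2\le 4m-2-r$, hence $r\le 2m$. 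Conversely the bottom row $q=0$ is $H^*(\mathbb{C}P^m)$ and classes landing there must come from row $r-1$, again only the $u$-row is relevant, giving $r-1\le 4m-3$; combined with $4m-2-r\ge 0$ this pins down $r\le 2m$. Comparing leading degrees of the two Poincar\'e polynomials shows the top class $x^m\otimes u$ of degree $6m-3$ must survive (it contributes the term $t^{6m-3}$ in $t^{4m-1}C_{m-1}(t)$), so the only surviving part of the $u$-row is exactly $t^{4m-1}C_{m-1}(t)=t^{4m-1}(1+t^2+\cdots+t^{2m-2})$, i.e.\ $u,x u,\ldots,x^{m-1}u$ survive and $x^m u$ dies. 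Since $|x^m u|=|u|+2m$, the class $x^m u$ — which lives in $\mathbf{E}^{2m,4m-3}_2$, but is the image of nothing — cannot itself support the relevant differential; rather it is $u=1\otimes u\in\mathbf{E}^{0,4m-3}$ whose image under $d_{2m}$ lands in $\mathbf{E}^{2m,2m-2}$ that is forced to be non-zero, and the multiplicative structure then propagates this to kill $x^m u$ while leaving $x^ju$ ($j<m$) alive, because those have no room to be hit.

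The final step is to verify that \emph{only} $d_{2m}$ is non-zero, i.e.\ that no $d_r$ with $r<2m$ (necessarily $r$ even) acts. Here I would argue by dimension count: if some earlier $d_r$ were non-zero, it would already reduce the total dimension in degrees $\le 6m-4$, and then the remaining room on the $u$-row would be too small to leave $C_{m-1}(t)t^{4m-1}$ worth of classes alive while matching $P_{F(\mathbb{C}P^m,3)}$ in the top degrees; more precisely, the surviving piece of the $u$-row must have Poincar\'e polynomial at least $t^{4m-1}C_{m-1}(t)$ (to produce the $s t^{4m-1}C_{m-1}(t)$ summand of Theorem \ref{thm1}, which lies in odd degrees and can only come from there), and since that row has total dimension $C_m(t)$ worth of classes (coefficients $1$ in degrees $4m-3,4m-1,\ldots,6m-3$), exactly one of them may die, and a single differential $d_r$ from a one-dimensional source can kill at most one — so there is a unique non-zero differential off that row, and by the bidegree analysis above it is $d_{2m}$. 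Symmetrically, on the bottom row $H^*(\mathbb{C}P^m)$, exactly one class ($x^m$ in degree $2m$, paired with $1\otimes u$ raised appropriately) is hit, confirming $d_{2m}(1\otimes u)=\lambda\, x^m\otimes 1\otimes(\text{degree }2m-2\text{ class})$ with $\lambda\neq0$. The main obstacle I anticipate is the bookkeeping needed to show the deficit polynomial $C_m(t)P_{F(\overset\circ{\mathbb{C}P}\,^m,2)}(t)-C_{m-1}^3(t)-t^{4m-1}C_{m-1}(t)$ is supported precisely in the bidegrees $\{(0,4m-3)\}$ and $\{(2m,2m-2)\}$ after one cancellation, i.e.\ equals $t^{4m-3}+t^{6m-2}$ — a routine but slightly delicate polynomial identity that I would reduce, using $C_k(t)=(1-t^{2k+2})/(1-t^2)$, to an identity among geometric series.
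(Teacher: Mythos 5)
Your overall strategy --- compare the Poincar\'e polynomial of $\mathbf{E}_2^{*,*}$ with that of $F(\mathbb{C}P^m,3)$ from Theorem \ref{thm1}, observe that the only odd-degree classes sit on the top row $q=4m-3$, and conclude that exactly one differential off that row is non-zero --- is the same as the paper's. But the execution has the surviving classes backwards. The top row contributes $t^{4m-3}C_m(t)=t^{4m-3}+t^{4m-1}+\dots+t^{6m-3}$, while the odd cohomology of $F(\mathbb{C}P^m,3)$ is $t^{4m-1}C_{m-1}(t)=t^{4m-1}+\dots+t^{6m-3}$; so the class that must die is $1\otimes u$ in degree $4m-3$, and $x\otimes u,\dots,x^m\otimes u$ all survive --- not, as you write, ``$u,xu,\dots,x^{m-1}u$ survive and $x^mu$ dies.'' Your subsequent attempt to explain how $x^m\otimes u$ dies is incoherent precisely because, as you note, it can neither support a differential (its column is $2m$) nor be hit (it lies on the top row); and in fact $d_{2m}(x^m\otimes u)=x^m\cdot d_{2m}(1\otimes u)$ lands in column $4m$ and vanishes, so it survives. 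Relatedly, the deficit polynomial is $t^{4m-3}+t^{4m-2}$ (the target $\mathbf{E}^{2m,2m-2}$ has total degree $4m-2$), not $t^{4m-3}+t^{6m-2}$.

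More importantly, your ``bidegree analysis'' does not actually pin down $r=2m$: occupancy of source and target columns/rows only gives $r$ even with $4\le r\le 2m$ (note also that the occupied even rows are $q\in\{0,2,\dots,4m-6\}$, not $\{0,2,\dots,2m\}$ as you state), so every even $r$ up to $2m$ remains a priori possible. The missing ingredient is the $H^*(\mathbb{C}P^m)$-module structure of the spectral sequence: if the first non-zero differential were $d_r(1\otimes u)=x^{r/2}\otimes v\neq0$ with $r<2m$, then $d_r(x\otimes u)=x^{r/2+1}\otimes v\neq0$, since multiplication by $x$ is injective on $\mathbf{E}_2^{p,*}$ for $p<2m$; this would kill $x\otimes u$, contradicting the survival count above. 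Hence $r=2m$, and $d_{2m}(1\otimes u)$ is a non-zero element of $\mathbf{E}^{2m,2m-2}_{2m}=x^m\otimes H^{2m-2}(F(\overset\circ{\mathbb{C}P}\,^m,2))$, which is the paper's conclusion. Without this multiplicativity step (or some substitute), the uniqueness of the non-zero differential at $r=2m$ is not established.
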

\begin{proof}
In the proof we will use complex coefficients (necessary for the given presentation of
$H^*(F(\overset\circ{\mathbb{C}P}\,^m,2);\mathbb{C})$); the structure of the differentials in the
spectral sequence with complex coefficients gives the same structure for the rational spectral sequence.

Like in the previous proof, non zero differentials could appear only on the top horizontal line $\mathbf{E}^{*,4m-3}_*$, the
remaining classes having even degrees. Its first Poincar\'{e} polynomial is
$$ P_{\mathbf{E}^{*,4m-3}_2}(t)=t^{4m-3}+t^{4m-1}(1+t^2+t^4+\ldots +t^{2m-2}), $$
whose second part equals the Poincar\'{e} polynomial of the odd cohomology of the space $F(\mathbb{C}P^m,3)$. Therefore
$1\o u\in\mathbf{E}^{0,4m-3}_*$ has a non zero differential and $x^k\o u$ must be a cocycle for $k=1,2,\ldots,m$.

\begin{center}
\begin{picture}(100,120)
\put(0,10){
\put(-80,80){$\mathbf{E}_{2m}^{*,*}:$} \put(0,0){\line(1,0){100}} \put(0,0){\line(0,1){100}}
\multiput(-2,-3)(20,0){3}{$\bullet$}  \put(60,-5){$\ldots$}
\put(80,-3){$\bullet$} \multiput(-2,-3)(0,20){3}{$\bullet$}
\multiput(-5,43)(0,5){3}{$\cdot$}   
\multiput(-2,60)(0,30){2}{$\bullet$} \multiput(-2,90)(20,0){3}{$\bullet$} \put(58,91){$\ldots$}
\put(80,90){$\bullet$}
\begin{scriptsize} \put(-10,24){$y$} \put(-40,64){\tiny$y^{m-1}z^{m-2}$}
\put(-10,17){$z$} \put(-10,90){$u$} \put(20,-10){$x$} \put(40,-10){$x^2$} \put(80,-10){$x^m$}
\put(50,70){$d_{2m}$}
\end{scriptsize}
\put(80,48){$\bullet$}  \put(0,92){\vector(2,-1){80}}
}
\end{picture}
\end{center}

All these imply that
$$ d_2=\ldots=d_{2m-1}=0, d_{2m}(1\o u)=x^m\o (c_0y^{m-1}+c_1y^{m-2}z+\ldots+c_{m-1}z^{m-1})\neq 0,$$
and also that $d_{2m+1}=d_{2m+2}=\ldots =0$.
\end{proof}
In the second part of this section we give a presentation of the cohomology algebras of the
ordered and unordered configuration spaces.
\begin{lem}\label{lem44}
The subalgebra of cocycles $Z_*^*(\mathbb{C}P^m,3)$ in $E_*^*(\mathbb{C}P^m,3)$ has the presentation
(as a graded commutative algebra):
$$\left\lan\begin{array}{l}
             a_1,a_2,a_3,  \\
                            \\
               w,v_1,v_2
                    \end{array}    \right.
\left| \begin{array}{l}
                a_i^{m+1}, s_mw,  (a_i-a_k)v_j,v_1v_2, v_jw, \\
                                                             \\
                                (s_1^2-3s_2)w,(s_1s_2-3s_3)w,
                 \end{array}       \right.
        \left.   \begin{array}{c}      w\left(\begin{array}{c}
                                                            a_1 \\
                                                            a_2 \\
                                                            a_3
                                                          \end{array}\right)
                                                          =A\left(\begin{array}{c}
                                                            w \\
                                                            v_1\\
                                                            v_2
                                                          \end{array}\right)\\
\end{array}\right\ran
$$
where $|a_i|=2$, $|w|=|v_j|=4m-1$, $s_k=a_1^k+a_2^k+a_3^k$ and $A=\left(
                                                                    \begin{array}{ccc}
                                                                      s_1/3 & -a_1 & 2a_1 \\
                                                                      s_1/3 & 2a_1 & -a_1 \\
                                                                      s_1/3 & -a_1 & -a_1 \\
                                                                    \end{array}
                                                                  \right).
$
\end{lem}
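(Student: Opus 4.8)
The plan is to identify the cocycle subalgebra $Z_*^*(\mathbb{C}P^m,3)$ inside the \Kz model by first exhibiting an explicit set of multiplicative generators, then showing they satisfy the listed relations, and finally a dimension count to see that no further relations are needed. The natural generators are the three classes $a_i=p_i^*(x)$ in $E_0^2$ (these are automatically cocycles), together with the cocycles living in exterior degree $1$ that were produced in Section \ref{section3}. From Lemma \ref{lem34} the one-dimensional kernel in $E_1^{4m-1}(3)$ is spanned by the symmetric cocycle $W$; I would rename it $w$. In the $(2,1)$-component, Lemma \ref{lem310} gives a two-dimensional space of cocycles in $E_1^{4m-1}(2,1)$, which I would span by two classes $v_1,v_2$ chosen compatibly with the $\mc{S}_3$-action (e.g. so that the transposition $(12)$ and the $3$-cycle act by the standard representation matrices); concretely $v_1,v_2$ can be written as $\mc{S}_3$-translates of a single element of the form $(x^{m}\o 1\o 1)G_{12}-(\text{permuted terms})$. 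Higher exterior degree contributes no new cocycles in low enough degree: the cocycles in $E_1^{(2m-1)+2k}$ for $k>m$ are, by Lemmas \ref{lem35} and \ref{lem310}, just scalar multiples of $t_{k-m}w$ and of $t_{k-m}v_j$, so they lie in the subalgebra generated by $a_1,a_2,a_3,w,v_1,v_2$ already (the symmetric power sums $s_k=a_1^k+a_2^k+a_3^k$ realize the scalars $t_k$), and $E_2^*$ contributes nothing to $Z$ in the relevant range since $d$ restricted there is injective onto coboundaries.

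Next I would verify each relation. The relations $a_i^{m+1}=0$ are inherited from $H^*(\mathbb{C}P^m)^{\o 3}$. The relation $s_m w=0$ is exactly Remark \ref{rem36} ($t_m W=0$), rewritten with $s_m=t_m$. The relations $(a_i-a_k)v_j=0$ come from the defining relation $p_j^*(x)G_{ij}=p_i^*(x)G_{ij}$ of the \Kz model applied to the monomials making up $v_j$; similarly $(s_1^2-3s_2)w$ and $(s_1 s_2-3s_3)w$ are the images under symmetrization of $(a_i-a_k)w$-type relations, i.e. $(a_i-a_k)w=0$ in $Z$, and $s_1^2-3s_2=\sum_{i<k}(a_i-a_k)^2$ up to normalization while $s_1s_2-3s_3$ is the analogous degree-three symmetric combination; so both annihilate $w$. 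The products $v_1v_2$, $v_jw$ and $v_1^2,v_2^2$ vanish because they sit in $E_2^{2(4m-1)}=0$ for degree reasons (exterior degree $\geq 2$ forces total degree at least $2(2m-1)$, but two factors each of degree $4m-1$ would need the scalar part to supply negative cohomological degree). The matrix relation $w\cdot a_i = A\cdot(w,v_1,v_2)^{\mathrm t}$ is the key computational identity: multiplying the symmetric cocycle $w$ by $a_i=p_i^*(x)$ produces an element of $E_1^{4m+1}$ whose cocycle part must, by the dimension counts of Section \ref{section3} (cocycles in $E_1^{4m+1}$ of each isotype: one copy of $V(3)$-part $s_1 w/3$ and the $V(2,1)$-part spanned by $v_1,v_2$), be a specific linear combination; I would pin down the coefficients by evaluating both sides on a single convenient monomial, e.g. the coefficient of $x^m\o 1\o x G_{12}$, which forces the entries $s_1/3, -a_1, 2a_1$ etc. This last step is where I expect the main work: one has to be careful that $a_i w$ is indeed a cocycle (it is, since $d$ is a derivation and $dw=0$, $da_i=0$) but that it is \emph{not} in general a scalar multiple of $w$ — it picks up a $(2,1)$-component, and getting the precise matrix $A$ right, including checking $\mc{S}_3$-equivariance of the whole relation, is the delicate point.

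Finally I would close the argument with a Hilbert-series comparison. The algebra $R$ presented by the generators and relations above can be analyzed as a module over $\mathbb{Q}[a_1,a_2,a_3]/(a_1^{m+1},a_2^{m+1},a_3^{m+1})$: its ``even part'' (the subalgebra generated by $a_1,a_2,a_3$) has Poincar\'e polynomial $C_m(t)^3$ minus the correction coming from the extra relations implied among the $a_i$ — but in fact no such extra relations are imposed, so the even part is all of $H^*((\mathbb{C}P^m)^3)$ with polynomial $C_m(t)^3$, which one then restricts using the structure; more efficiently, the ``odd part'' is the $R_{\mathrm{ev}}$-submodule generated by $w,v_1,v_2$, and the relations $(a_i-a_k)v_j=0$ force $v_j$ to be annihilated by the augmentation ideal of the ``difference'' subalgebra, while $s_m w=0$ and $(s_1^2-3s_2)w=(s_1s_2-3s_3)w=0$ cut the $w$-module down to the right size. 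A direct count shows the resulting Poincar\'e polynomial is the one whose symmetrization-theoretic ingredients match $P_{Z_1^*(3)}(t,s)+P_{Z_1^*(2,1)}(t,s)$ in exterior degree $1$ and $C_m(t)^3$ in exterior degree $0$, i.e. exactly the Poincar\'e series of $Z_*^*(\mathbb{C}P^m,3)$ as read off from Section \ref{section3}. Since the evident surjection $R \twoheadrightarrow Z_*^*(\mathbb{C}P^m,3)$ is then an isomorphism in every bidegree by equality of dimensions, the presentation is complete. The main obstacle, as noted, is establishing the matrix relation with $A$ and confirming the dimension bookkeeping is tight; the rest is bookkeeping with the \Kz relations and degree reasons.
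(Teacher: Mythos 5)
Your overall strategy is the same as the paper's: take $a_i=p_i^*(x)$, $w=W$ from Lemma \ref{lem34}, and $v_1,v_2$ spanning the two--dimensional cocycle space of Lemma \ref{lem310}; verify the listed relations; then show the surjection from the presented algebra is an isomorphism by comparing dimensions (the even part is $C_m(t)^3$, the odd part is spanned in each degree by $s_1^qw,a_1^qv_1,a_1^qv_2$, matching $\dim Z_1^*=3$, with the single exception in degree $6m-1$ where both sides are $2$ because $s_1^mw=0$). Most of your relation checks are fine, and your degree argument for $v_1v_2=v_jw=0$ (the products land in $E_2^{8m-2}$, which vanishes since $E_2^k=0$ for $k>6m-2$) is a legitimate alternative to the paper's appeal to the injectivity of $d$ on $E_2^*$.

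There is, however, one genuine error: you justify $(s_1^2-3s_2)w=0$ and $(s_1s_2-3s_3)w=0$ by asserting that ``$(a_i-a_k)w=0$ in $Z$.'' This is false, and it contradicts the matrix relation that you yourself single out as the key identity of the lemma: from $w\,a_i=A(w,v_1,v_2)^{\mathrm t}$ one reads off $(a_1-a_2)w=-3a_1v_1+3a_1v_2$, which is a \emph{nonzero} coboundary in $Z_*^*$ (it only dies in cohomology, cf.\ the computation $(x\o1\o1-1\o x\o1)W=d(3(1\o x\o1)G_{13}G_{23})$ in the proof of Theorem \ref{thm3}). So the verification of these two relations, as you have written it, fails. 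The relations are nonetheless true: the paper checks them by direct computation, comparing the $G_{12}$-coefficients of $t_1^2W$ with $3t_2W$ and of $t_1t_2W$ with $3t_3W$ (symmetry reduces the check to that single coefficient). Alternatively, once the matrix relation is established one sees that each $(a_i-a_k)w$ lies in the span of $a_1v_1,a_1v_2$ (the $w$-column of $A$ is the constant $s_1/3$), so a second difference kills it via $(a_i-a_k)v_j=0$; since $s_1^2-3s_2=-\sum_{i<k}(a_i-a_k)^2$ this recovers the quadratic relation, and a similar (but not automatic) manipulation is needed for the cubic one. Either way you must replace the false premise by an actual computation; as it stands, that step of your argument is internally inconsistent with the statement you are proving.
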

\begin{proof}
From Lemmas \ref{lem34}, \ref{lem35} and \ref{lem310} the algebra of cocycles has three generators $x\o1\o1,1\o x\o1,1\o1\o x$
in degree two and three other generators in higher degrees, $W\in E_1^{4m-1}(3)$ and $V_1,V_2\in E_1^{4m-1}(2,1)$,
where
\[\begin{array}{cl}
    V_1= & (x^m\o 1\o 1+x^{m-1}\o x\o1+\ldots+1\o x^m\o 1)G_{13}- \\
         & -(x^m\o1\o1+x^{m-1}\o1\o x+\ldots+1\o1\o x^m)G_{12}, \\
    V_2= & (x^m\o 1\o 1+x^{m-1}\o x\o1+\ldots+1\o x^m\o 1)G_{23}- \\
         & -(x^m\o1\o1+x^{m-1}\o1\o x+\ldots+1\o1\o x^m)G_{12}.
  \end{array}
\]

From the algebra $\mc{B}$ with the above presentation, we define a map $\theta: \mc{B}\ra Z_*^*$ by
$$a_i\mapsto p_i^*(x), w\mapsto W, v_j\mapsto V_j,$$
and this is well defined because:

1) $x^{m+1}=0$ implies $\theta(a_i^{m+1})=0$ and, from Remark \ref{rem36}, $\theta(s_mw)=t_mW=0$;

2)  for every $i=1,2,3,$ the product $p_i^*(x) V_1$ equals
$$(x^m\o x\o1+\ldots+x\o x^{m}\o1)G_{13}-(x^m\o 1\o x+\ldots+x\o1\o x^{m})G_{12};$$

3) the cocycles $WV_i$ and $V_1V_2$ must be zero due to injectivity of $d:E_2^*\ra E_1^*$;

4) the polynomials $t_1^2W$ and $3t_2W$ are symmetric, so it is enough to compute the coefficients of $G_{12}$:
 we find the same value
 $$(18m-36)x^m\o1\o x^2+ (18m-63)x^{m-1}\o1\o x^3+\ldots-(9m-18)x^2\o1\o x^m,$$
 and in a similar way, the coefficients of $G_{12}$ in $t_1t_2W$ and $3t_3W$ coincide:
 $$(18m-54)x^m\o1\o x^3+ (18m-81)x^{m-1}\o1\o x^4+\ldots-(9m-27)x^3\o1\o x^m;$$


5) a basis of the three dimensional space $Z_1^{4m+1}$ is given by the cocycles $t_1W$, $p_1^*(x)V_j$ $(j=1,2)$.
The formulae
\[\begin{array}{cl}
    p_1^*(x)W= & 1/3t_1W-p_1^*(x)V_1+2p_1^*(x)V_2, \\
    p_2^*(x)W= & 1/3t_1W+2p_1^*(x)V_1-p_1^*(x)V_2, \\
    p_3^*(x)W= & 1/3t_1W-p_1^*(x)V_1-p_1^*(x)V_2
  \end{array}
\]
are obtained by direct computations.

The map $\theta$ is surjective because the generators $\{p_i^*(x)\}_{i=1,2,3}$, $\{V_j\}_{j=1,2}$
and $W$ belong to the image. The restriction of $\theta$ to $\mc{B}_0$, the subalgebra generated by
$\{a_i\}_{i=1,2,3}$, is injective. The defining relations of $\mc{B}$ show that each homogenous
component of $\mc{B}_1$, the ideal generated by the elements $w,\{v_j\}_{j=1,2}$ is the span of the elements
$s_1^qw, a_1^qv_1,a_1^qv_2$; therefore $3\geq \dim\mc{B}_1^*\geq\dim Z_1^*=3$
(with one exception: $2\geq \dim\mc{B}_1^{6m-1}\geq\dim Z_1^{6m-1}=2$) and $\theta$ is an isomorphism.
\end{proof}
\bigskip

\noindent {\em Proof of Theorem \ref{thm3}}.
Let us denote by $\mc{A}$ the graded commutative algebra
$$\lan\a_1,\a_2,\a_3,\eta\mid \a_1^{m+1},r_m(\a_i,\a_j), (\a_i-\a_j)\eta,\a_1^m\eta\ran$$
where $|\a_i|=2$ and $|\eta|=4m-1$. We define the algebra morphisms $\varphi, \psi_0$ and $\psi$
\begin{center}
\begin{picture}(50,70)
\put(-30,10){
\put(-40,0){$\mc{A}$} \put(5,0){$H^*(F(\mathbb{C}P^m,3))$}  \put(-30,3){\vector(1,0){30}} \put(-21,6){$\varphi$}
\put(20,40){$Z_*^*(\mathbb{C}P^m,3)$}  \put(120,40){$\mc{B}$}  \put(112,43){\vector(-1,0){36}} \put(90,45){$\theta$} \put(89,35){$\cong$}
\multiput(80,3)(8,0){4}{\line(1,0){4}}  \put(112,3){\vector(1,0){4}}  \put(90,6){$\psi$}
\put(120,0){$\mc{A}$}  \put(128,20){$\psi_0$}
\multiput(55,35)(70,0){2}{\vector(0,-1){25}}  \put(55,19){\vector(0,-1){5}}
}
\end{picture}
\end{center}
and we show that $\varphi$ is surjective and $\psi\varphi=id_{\mc{A}}$.

The morphism $\varphi$ given by
$\a_i\mapsto[p_i^*(x)],\eta\mapsto [W]$ is well defined:

1) $x^{m+1}=0$ implies $\varphi(\a_i^{m+1})=0$;

2) we have $\varphi(r_m(\a_i,\a_j))=[dG_{ij}]$;

3) $\varphi((\a_1-\a_2)\eta)=0$ is a consequence of the following computation
$$\begin{array}{rl}
        (x\o1\o1-1\o x\o1)W= & 3(x^m\o x\o 1+\ldots +x\o x^m\o 1)(G_{23}-G_{13})\\
        = & d(3(1\o x\o1)G_{13}G_{23})
      \end{array}
$$
and similarly for the relation $\varphi((\a_1-\a_3)\eta)=0$;

4) $\varphi(\a_1^m\eta)=[x^m\o1\o1][W]$ and this product is represented by the coboundary
$$\begin{array}{cl}
         (x^m\o1\o1)W & =x^m\o x^m\o1(2m G_{23}-mG_{13})-m(x^m\o1\o x^m) G_{12}= \\
              & =d(m(x^m\o 1\o1)(2G_{12}G_{23}-G_{12}G_{13})).
       \end{array}
$$
The even part of the cohomology is generated by $[p_i^*(x)]_{i=1,2,3}$ and its odd part by $[t_kW]$,
see Lemmas \ref{lem35} and \ref{lem310}, therefore $\varphi$ is surjective.

The algebra morphism $\psi_0:\mc{B}\ra\mc{A}$ given by $a_i\mapsto \a_i, w\mapsto\eta, v_j\mapsto0$ is well defined:

1) $\psi_0(a_1^{m+1})=\a_{1}^{m+1}=0$ and the relations $\psi_0(a_i^{m+1})=0$, $i=2,3$ are consequences of $\a_1^{m+1}=0$
and $(\a_1-\a_i)r_m(\a_1,\a_i)=0;$

2) $\psi_0(s_mw)=0$ is a consequence of $\a_i^m\eta=0$;

3) the relations containing only monomials in $v_1,v_2$ are obviously sent to zero;

4) the relation $(s_1^2-3s_2)w$ is sent to $(\t_1^2-3\t_2)\eta=((3\a_1)^2-9\a_1^2)\eta=0$
(where $\tau_k=\a_1^k+\a_2^k+\a_3^k$), and similarly for $(s_1s_2-3s_3)w$;

5) the last three relations in $\mc{B}$ are sent to zero because $v_j\mapsto 0$ and $\a_1\eta=\frac{1}{3}\t_1\eta.$
\\The morphism $\psi_0$ annihilates the inverse image of the ideal of coboundaries in $Z_*^*$, hence it induces the algebra morphism
$\psi:H^*\ra\mc{A}$:

1) $\psi_0(\theta^{-1}(dG_{ij}))=r_m(\a_i,\a_j)=0;$

2) from Lemma \ref{lem310} we have $\psi_0(\theta^{-1} (dE_2^*))=\psi_0(\theta^{-1}(E_0^*(V_1,V_2)))=0.$
\\Finally, the composition $\psi\varphi$ leaves fixed the generators of $\mc{A}$, hence $\psi\varphi=id.$
\hfill{$\Box$}

In the next lemmas and their proofs the rational polynomials $P_k$ are defined in Section \ref{section1}:
if $T_k=X_1^k+X_2^k+X_3^k$, then $T_k=P_k(T_1,T_2,T_3)$, and $\s_1,\s_2,\s_3$ are the Vi\`{e}te polynomials
expressed in terms of Newton polynomials:
$$\s_1=T_1,\,\,\,\s_2=\frac{1}{2}(T_1^2-T_2),\,\,\,\s_3=\frac{1}{6}T_1^3-\frac{1}{2}T_1T_2+\frac{1}{3}T_3.$$
\begin{lem}\label{lem45}
The algebra of $\mc{S}_3$-invariant cocycles has the presentation:
$$\lan\t_1,\t_2,\t_3,\eta\mid P_{m+1},P_{m+2},P_{m+3},(\t_1^2-3\t_2)\eta, (\t_1\t_2-3\t_3)\eta,P_m\eta\ran,$$
where $|\t_i|=2i$ $(i=1,2,3)$, $|\eta|=4m-1$ and $P_k=P_k(\t_1,\t_2,\t_3)$.
\end{lem}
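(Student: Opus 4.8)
The plan is to obtain the $\mc{S}_3$-invariant cocycles as the invariant subalgebra of $Z_*^*(\mathbb{C}P^m,3)$, whose presentation is given in Lemma \ref{lem44}, and to translate that presentation into Newton-polynomial coordinates. First I would recall that the $\mc{S}_3$-action on $Z_*^*$ was described in Section \ref{section2}: on the even part $\mc{B}_0 = \mathbb{Q}[a_1,a_2,a_3]/\lan a_i^{m+1}, r_m(a_i,a_j)\ran$ the symmetric group permutes the $a_i$, so by the fundamental theorem of symmetric functions the invariants are the subalgebra generated by $\t_1=a_1+a_2+a_3$, $\t_2 = a_1^2+a_2^2+a_3^2$, $\t_3 = a_1^3+a_2^3+a_3^3$ (equivalently $s_1,s_2,s_3$ in the notation of Lemma \ref{lem44}). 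On the odd part, Lemmas \ref{lem35} and \ref{lem310} together with Lemma \ref{lem44} show $Z_1^{4m-1+2k} = \mathbb{Q}\lan s_1^k w, a_1^k v_1, a_1^k v_2\ran$ (with $V_1,V_2$ spanning the $V(2,1)$-isotypic part and $W$ the $V(3)$-part); the $\mc{S}_3$-invariant part of each such component is exactly the line spanned by the symmetric element $t_k W = s_1^k w$, since $v_1,v_2$ carry no trivial summand. Hence the invariant cocycle algebra is generated, as an algebra, by $\t_1,\t_2,\t_3$ and $\eta := w$.

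Next I would read off the relations. The relations $a_i^{m+1}=0$ and $r_m(a_i,a_j)=0$ define the ideal of the even part; intersecting with the symmetric subalgebra, the ideal of relations is generated by the elementary symmetric functions of degree $m+1,m+2,m+3$ in the $a_i$, which, when rewritten in terms of power sums $\t_1,\t_2,\t_3$, are (up to nonzero scalars and the Newton identities) exactly $P_{m+1},P_{m+2},P_{m+3}$ — here one uses that $P_k(\t_1,\t_2,\t_3)$ represents $a_1^k+a_2^k+a_3^k$, and the vanishing of $a_1^{m+1},a_2^{m+1},a_3^{m+1}$ together with the relations $(a_1-a_i)r_m(a_1,a_i)=0$ forces the three power sums $\sum a_i^{m+1}, \sum a_i^{m+2}, \sum a_i^{m+3}$ to vanish, and these generate the intersection ideal. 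For the mixed relations: in Lemma \ref{lem44} we have $(s_1^2-3s_2)w=0$, $(s_1s_2-3s_3)w=0$, and $s_m w=0$; under the identification $s_i \leftrightarrow \t_i$ and $w\leftrightarrow\eta$ these become $(\t_1^2-3\t_2)\eta=0$, $(\t_1\t_2-3\t_3)\eta=0$, and $P_m\eta=0$ (since $s_m = \sum a_i^m$ is represented by $P_m$). The relations involving $v_1,v_2$ and the matrix relation $w\cdot(a_i) = A\cdot(w,v_1,v_2)^{\mathsf{T}}$ restrict trivially, because on invariants $v_1,v_2$ are absent and $a_1\eta = \frac13\t_1\eta$ — consistent with the first row of $A$ — so they impose nothing new beyond what $\t_2,\t_3$ being symmetric functions of $a_i$ already encodes.

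To make this rigorous I would argue as in the proof of Lemma \ref{lem44}: build a candidate algebra $\mc{C}$ with the claimed presentation $\lan\t_1,\t_2,\t_3,\eta\mid P_{m+1},P_{m+2},P_{m+3},(\t_1^2-3\t_2)\eta,(\t_1\t_2-3\t_3)\eta,P_m\eta\ran$, define a morphism $\mc{C}\to Z_*^*(\mathbb{C}P^m,3)^{\mc{S}_3}$ by $\t_i\mapsto s_i$, $\eta\mapsto W$, check well-definedness by verifying each relation maps to $0$ (each is a relation already present in $\mc{B}$ via $\theta$, so this is immediate from Lemma \ref{lem44}), check surjectivity from the generation statement above, and then verify injectivity by a dimension count: the even part of $\mc{C}$ is $\mathbb{Q}[\t_1,\t_2,\t_3]/\lan P_{m+1},P_{m+2},P_{m+3}\ran$, which is the symmetric part of $\mc{B}_0$ and has the right Hilbert series ($P_{3,\le m}(k)$ in degree $2k$), while the ideal generated by $\eta$ is spanned by $\t_1^k\eta$ (using the mixed relations to reduce any monomial $\t_1^a\t_2^b\t_3^c\eta$ to a multiple of $\t_1^{a+2b+3c}\eta$, and $P_m\eta=0$ to truncate), matching $\dim Z_1^{4m-1+2k}{}^{\mc{S}_3}=1$ for the relevant range.

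The main obstacle I anticipate is the bookkeeping in the injectivity step, specifically confirming that the intersection of the ideal $\lan a_i^{m+1}, r_m(a_i,a_j)\ran$ with $\mathbb{Q}[\t_1,\t_2,\t_3]$ is generated precisely by $P_{m+1},P_{m+2},P_{m+3}$ and not something smaller or requiring extra generators — equivalently, that the even-degree Hilbert series of the presented algebra $\mc{C}$ agrees term-by-term with $\sum_k P_{3,\le m}(k)t^{2k}$, which follows from Proposition \ref{prop37} but needs the power-sum change of coordinates to be handled carefully since the $P_k$ are the Newton polynomials and not the elementary symmetric functions. A clean way around this is to observe that $\mc{C}$'s even part and the symmetric subalgebra $\mc{B}_0^{\mc{S}_3}$ are both quotients of $\mathbb{Q}[\t_1,\t_2,\t_3]$, the surjection $\mc{C}\to\mc{B}_0^{\mc{S}_3}$ is degree-preserving, and the two have the same (finite) total dimension $=\sum_k P_{3,\le m}(k)$ by Proposition \ref{prop37}, forcing it to be an isomorphism; the odd part is then pinned down by the single-dimensionality of each $Z_1^{4m-1+2k}{}^{\mc{S}_3}$ together with $\dim\mc{C}_1^{6m-1}=1$.
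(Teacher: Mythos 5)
Your overall architecture is the same as the paper's: a surjection $\mc{C}\to Z_*^*(3)$ from the presented algebra, the reduction $Q(\t_1,\t_2,\t_3)\eta=Q(\t_1,\tfrac13\t_1^2,\tfrac19\t_1^3)\eta$ forcing each odd component of the $\eta$-ideal to be at most one-dimensional (with $P_m\eta=0$ killing degree $6m-1$), and a separate injectivity argument for the even part. The odd part of your argument is fine and matches Lemma \ref{lem45}'s proof.

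The gap is in the even part, and it is exactly the point you flag as ``the main obstacle'': your proposed workaround is circular. You conclude that the surjection $\mc{C}_0\to\mc{B}_0^{\mc{S}_3}\cong Z_0^*(3)$ is an isomorphism because ``the two have the same total dimension $\sum_k P_{3,\le m}(k)$ by Proposition \ref{prop37}.'' But Proposition \ref{prop37} computes the dimension of the \emph{target} $Z_0^*(3)$ only; the dimension of the abstractly presented quotient $\mc{C}_0=\mathbb{Q}[\t_1,\t_2,\t_3]/\lan P_{m+1},P_{m+2},P_{m+3}\ran$ is precisely what is in question, and a priori you only know $\dim\mc{C}_0\ge\dim Z_0^*(3)$ from surjectivity. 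To get the reverse inequality you must show that the ideal $\lan P_{m+1},P_{m+2},P_{m+3}\ran$ contains the whole kernel of $\mathbb{Q}[T_1,T_2,T_3]\to Z_0^*(3)$, which is spanned by the orbit sums $Q_{a,b,c}=\sum_{\pi\in\mc{S}_3}\pi(X_1^aX_2^bX_3^c)$ with $a\ge m+1$. This is the content of the two identities the paper uses and your proposal omits: first, the Newton recursion $P_k=\s_1P_{k-1}-\s_2P_{k-2}+\s_3P_{k-3}$ shows $P_k=0$ in $\mc{C}$ for every $k\ge m+1$ (not just the three listed degrees); second, the expressions $Q_{a,b,0}=P_aP_b-P_{a+b}$ and $Q_{a,b,c}=P_aP_bP_c-(P_{a+b}P_c+P_{a+c}P_b+P_aP_{b+c}+P_{a+b+c})$ then show every such orbit sum vanishes in $\mc{C}$. (Equivalently, these identities furnish the inverse $Z_0^*(3)\to\mc{C}_0$.) Without this step your claim that ``$\sum a_i^{m+1},\sum a_i^{m+2},\sum a_i^{m+3}$ generate the intersection ideal'' remains an unproved assertion, and the presentation could in principle require extra relations.
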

\begin{proof}
As in the proof of Lemma \ref{lem44} we define a surjective algebra morphism $\theta$ between the algebra $\mc{C}$
given by the above presentation and $Z_*^*(3)$:
$$\theta:\mc{C}\longrightarrow Z_*^*(3),\,\,\,\t_i\mapsto t_i,\, i=1,2,3,\,\,\,\eta\mapsto W.$$
The element $\theta(P_k(\t_1,\t_2,\t_3))=t_k$ is zero in $Z_*^*(3)$ for $k\geq m+1$ and the images of the last three relations
are zero from the steps 1 and 4 in the proof of Lemma \ref{lem44}. The Newton symmetric polynomials $t_1,t_2,t_3$ generate
$Z_0^*(3)$ and, from Lemma \ref{lem35}, $Z_1^*(3)$ is generated by $t_1,t_2,t_3$ and $W$, hence $\theta$ is surjective.
We denote by $\mc{C}_0$ the subalgebra generated by $\t_1,\t_2,\t_3$ and by $\mc{C}_1$ the ideal generated by $\eta$. We have
$$1\geq \dim \mc{C}_1^k\geq\dim Z_1^k(3)=1\,\,\, \mbox{for } k=4m-1,4m+1,\ldots,6m-3$$
due to $Q(\t_1,\t_2,\t_3)\eta=Q(\t_1,\frac{1}{3}\t_1^2,\frac19\t_1^3)\eta;$ this relation and $P_m\eta=0$ imply
$$0=\dim\mc{C}_1^{6m-1}=\dim Z_1^{6m-1}.$$

Now we construct an inverse of the restriction map $\theta|_{\mc{C}_0}$: the algebra map
$$\ve:\mathrm{Symm}[X_1,X_2,X_3]=\mathbb{Q}[T_1,T_2,T_3]\ra \mc{C}_0 \mbox{  given by  } T_i\mapsto\t_i$$
induces a map $Z_0^*(3)\ra\mc{C}_0$ because $\ve$ is zero on the sums
$Q_{a,b,c}=\mathop{\sum}\limits_{\pi\in\mc{S}_3}\pi(X_1^aX_2^bX_3^c)$ $(a\geq b\geq c\geq0)$ if $a\geq m+1:$

1) first $P_{m+1}=P_{m+2}=P_{m+3}=0$ in $\mc{C}$ implies that $P_k=\s_1P_{k-1}-\s_2P_{k-2}+\s_3P_{k-3}$ is also zero
in $\mc{C}$ for any $k\geq m+4;$

2) secondly, $Q_{a,b,0}= P_aP_b-P_{a+b}$ and $$Q_{a,b,c}= P_aP_bP_c-(P_{a+b}P_c+P_{a+c}P_b+P_aP_{b+c}+P_{a+b+c}).$$
\end{proof}
\begin{lem}\label{lem46}
 The vector space $E_{1}^{2k+2m-1}(3)$ is generated by
 $$\G_k=x^k\o1\o1G_{12}+x^k\o1\o1G_{13}+1\o x^k\o1G_{23} \mbox{ and  }M(t_1,t_2,t_3)\G_0,$$
 where $M$ is an arbitrary monomial of degree $2k$.
\end{lem}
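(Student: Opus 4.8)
The plan is to pull the statement back, via restriction of the \Kz model to the diagonal $\Delta_{12}=\{x_1=x_2\}$, to an elementary assertion about symmetric polynomials in three variables. First I would check membership: $\G_k$ and every $M(t_1,t_2,t_3)\G_0$ has exterior degree $1$ and total degree $2k+2m-1$, and each is $\S3$-invariant — the product $M(t_1,t_2,t_3)\G_0$ because both factors are (recall $\G_0=G_{12}+G_{13}+G_{23}$), and $\G_k$ by a direct check, using $p_j^*(x)G_{ij}=p_i^*(x)G_{ij}$, that it is fixed by the transposition $(12)$ and by the $3$-cycle $(123)$.

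For the reduction step I would use that in exterior degree $1$ the only relations are $p_j^*(x)G_{ij}=p_i^*(x)G_{ij}$, so that $E_1^{*}=\bigoplus_{i<j}\bigl(H^*(\mathbb{C}P^m)\o H^*(\mathbb{C}P^m)\bigr)G_{ij}$, the first tensor factor recording the merged pair $\{i,j\}$ and the second the remaining point. Write $y$ and $z$ for these two classes, so that $x_1^p x_2^q x_3^r G_{12}\leftrightarrow y^{p+q}z^r$ and the $G_{12}$-summand in cohomological degree $2k+2m-1$ is $A_k=\langle y^iz^{k-i}\mid 0\le i\le k\rangle$ inside $A=\mathbb{Q}[y,z]/(y^{m+1},z^{m+1})$. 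Let $\pi_{12}\colon E_1^*\to A$ be the projection onto the $G_{12}$-summand. If $v\in E_1^*(3)$ with $\pi_{12}(v)=0$, then applying $(23)$ and $(13)$, which carry the $G_{12}$-summand isomorphically to the $G_{13}$- and $G_{23}$-summands, forces $\pi_{13}(v)=\pi_{23}(v)=0$, hence $v=0$; thus $\pi_{12}$ is injective on $E_1^*(3)$, and a subspace of $E_1^{2k+2m-1}(3)$ whose $\pi_{12}$-image is all of $A_k$ must be all of $E_1^{2k+2m-1}(3)$. So it suffices to prove that $\pi_{12}(\G_k)$ together with the $\pi_{12}\bigl(M(t_1,t_2,t_3)\G_0\bigr)$ span $A_k$.

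Now $\pi_{12}(\G_k)=x^k\o1\o1\,G_{12}\leftrightarrow y^{k}$, while $\pi_{12}\bigl(M(t_1,t_2,t_3)\G_0\bigr)=M(t_1,t_2,t_3)G_{12}\leftrightarrow M(2y+z,\,2y^2+z^2,\,2y^3+z^3)$, because $t_i=x_1^i+x_2^i+x_3^i$ restricts to $2y^i+z^i$ on $\Delta_{12}$. Hence the problem reduces to the purely algebraic identity
$$ R_k+\mathbb{Q}\,y^{k}=\mathbb{Q}[y,z]_k\qquad(k\ge1), $$
where $R=\mathbb{Q}\bigl[\,2y+z,\ 2y^2+z^2,\ 2y^3+z^3\,\bigr]$ is the image of the ring of symmetric polynomials in $x_1,x_2,x_3$ under $x_1=x_2=y$, $x_3=z$; reducing modulo $(y^{m+1},z^{m+1})$ then gives the lemma (for $k>m$ one has $\overline{y^{k}}=0=\G_k$, but the reduction of $R_k$ alone already spans $A_k$).

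To prove the identity I would present $\mathbb{Q}[y,z]$ as the free module $S\oplus Sw$ over the polynomial ring $S=\mathbb{Q}[\bar e_1,\bar e_2]$, where $\bar e_1=2y+z$, $\bar e_2=y^2+2yz$, $w=\tfrac13(y-z)$ and $w^2=\tfrac19(\bar e_1^{2}-3\bar e_2)\in S$. A short computation gives $\bar e_3=y^2z=g_0+g_1w$ with $g_0,g_1\in S$ and $g_1$ a nonzero multiple of $\bar e_1^{2}-3\bar e_2$; since $(g_1w)^2=g_1^{2}w^{2}\in S$, it follows that $R=S[\bar e_3]=S\oplus g_1Sw$, so $\mathbb{Q}[y,z]/R\cong S/g_1S\cong\mathbb{Q}[\bar e_1]$ (up to a degree shift by $1$), which is one-dimensional in each nonnegative degree. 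Thus $\mathbb{Q}[y,z]_k/R_k$ is one-dimensional for $k\ge1$, and it remains only to see that the image of $y^{k}$ there is nonzero: the $w$-component of $y^{k}=\bigl(w+\tfrac13\bar e_1\bigr)^{k}$, taken modulo $w^{2}$ (equivalently modulo $g_1$), is $\tfrac{k}{3^{k-1}}\bar e_1^{\,k-1}\neq0$. This gives $R_k\oplus\mathbb{Q}y^k=\mathbb{Q}[y,z]_k$. The delicate points are the bookkeeping of the reduction step — well-definedness and injectivity of $\pi_{12}$ on the invariants, and the precise form of $\pi_{12}(\G_k)$, $\pi_{12}(\G_0)$ and the restriction of $t_i$ to $\Delta_{12}$ — together with the module identity $R=S\oplus g_1Sw$; the rest is formal.
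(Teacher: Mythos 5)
Your proof is correct, but it takes a genuinely different route from the paper. The paper argues by a double induction (on $k$, then on $j$) directly in the canonical basis $B_{i,j}=x^i\otimes1\otimes x^jG_{12}+x^i\otimes x^j\otimes1 G_{13}+x^j\otimes x^i\otimes1 G_{23}$, using the single recursion $B_{i,j}=t_1B_{i,j-1}-2B_{i+1,j-1}$. You instead project onto the $G_{12}$-summand — correctly observing that this projection is injective on the $V(3)$-isotypic part and turns the statement into the algebra identity $R_k+\mathbb{Q}\,y^k=\mathbb{Q}[y,z]_k$ for the subring $R=\mathbb{Q}[2y+z,2y^2+z^2,2y^3+z^3]$ — and then settle that identity by computing $\mathbb{Q}[y,z]/R$ as the cyclic module $S/g_1S\cong\mathbb{Q}[\bar e_1]$ (your computations $w^2=\tfrac19(\bar e_1^2-3\bar e_2)$, $\bar e_3=g_0+g_1w$ with $g_1=-\tfrac29(\bar e_1^2-3\bar e_2)$, and the nonvanishing of the $w$-component of $(w+\tfrac13\bar e_1)^k$ modulo $w^2$ all check out). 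Your route is longer but buys more: it gives an exact dimension count ($\dim R_k=k$, codimension one, complemented by $y^k$), explains structurally why exactly one generator beyond the symmetric multiples of $\Gamma_0$ is needed, and handles the truncation by $(y^{m+1},z^{m+1})$ and the degenerate range $k>m$ cleanly. It is also more robust than the paper's induction as displayed: in the inductive step the term $t_1B_{i,j-1}$ produces, via the outer hypothesis, a contribution $t_1\Gamma_{k-1}$ which is not itself manifestly in the span of $\Gamma_k$ and the $M(t_1,t_2,t_3)\Gamma_0$, so the paper's one-line recursion needs supplementation (e.g.\ by the relation $\Gamma_k=\sigma_1\Gamma_{k-1}-\sigma_2\Gamma_{k-2}+\sigma_3\Gamma_{k-3}$ invoked later in Section 4); your argument avoids this issue entirely.
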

\begin{proof}
By induction on $k$: for $k=0$ we have $E_1^{2m-1}(3)=\mathbb{Q}\lan\G_0\ran$. The canonical basis in $E_{1}^{2k+2m-1}(3)$
is given by
$$\{B_{i,j}=x^i\o1\o x^jG_{12}+x^i\o x^j\o1G_{13}+x^j\o x^i\o1G_{23}\}_{i+j=k}.$$
Now we start an induction on $j$: if $(i,j)=(k,0)$ we have $\G_k$; if $j\geq 1$ we find that
$$B_{i,j}=t_1B_{i,j-1}-2B_{i+1,j-1}$$
is a linear combination of $\G_k$ and $M(t_1,t_2,t_3)\G_0$.
\end{proof}
\noindent {\em Proof of Theorem \ref{thm2}}.
This is similar to the proof of Theorem \ref{thm3}; in the next diagram $\mc{D}$
 is the algebra presented in the statement of Theorem \ref{thm2}
and $\mc{C}$ and $\theta$ are from Lemma \ref{lem45}:
\begin{center}
\begin{picture}(50,65)
\put(-70,10){
\put(65,0){$\mc{D}$}
\put(65,40){$\mc{C}$}  \put(120,40){$Z_*^*(3)$}  \put(78,43){\vector(1,0){38}} \put(94,45){$\theta$} \put(92,35){$\cong$}
\multiput(80,3)(8,0){4}{\line(1,0){4}}  \put(112,3){\vector(1,0){4}}  \put(93,6){$\varphi$}
\put(120,0){$H_*^*(3)$}
\multiput(67,35)(70,0){2}{\vector(0,-1){25}}  \multiput(67,19)(70,0){2}{\vector(0,-1){5}}
}
\end{picture}
\end{center}
The induced algebra morphism $\varphi:\t_i\mapsto [t_i],\,i=1,2,3,\,\,\eta\mapsto[W]$ is an isomorphism
because the ideal generated by $\eta$ is isomorphic to $Z_1^*(3)\cong H_1^*(3)$ (see Lemma \ref{lem45}),
and the subalgebra generated by $\t_1,\t_2,\t_3$ is isomorphic to $H_0^*(3)\cong Z_0^*(3)\diagup B_0^*(3)$:
the preimage of coboudaries in $Z_*^*(3)$ is generated by the three relations containing quadratic terms in $P_*$,
as a consequence of Lemma \ref{lem46}
and of the following relations:
\[\begin{array}{rcl}
  d(\G_0) &= & \theta(P_0P_m+P_1P_{m-1}+\ldots+P_mP_0-(m+1)P_m), \\
  d(\G_1) &= & \theta(P_1P_m+P_2P_{m-1}+\ldots+P_mP_1), \\
  d(\G_2) &= & \theta(P_2P_m+P_3P_{m-1}+\ldots+P_mP_2),\\
  d(M(t_1,t_2,t_3)\G_0) & = & M(t_1,t_2,t_3)d(\G_0),\\
    d(\G_k)&= &  d(\s_1\G_{k-1}-\s_2\G_{k-2}+\s_3\G_{k-3})= \\
           &=& \s_1d(\G_{k-1})-\s_2d(\G_{k-2})+\s_3d(\G_{k-3})).
\end{array}\]
For $k=4$, in the last relations we have to use also the relation $\s_3P_m=0$,
and this is a consequence of $P_k=0$ for $k=m+1,m+2,m+3$.
\hfill{$\Box$}

\section{The stable cohomology classes}
For the infinite dimensional complex projective space we will consider configurations with an arbitrary number of points.
\begin{theorem}\label{theorem5.1}
The inclusion map
$$ \iota:F(\mathbb{C}P^{\infty},n)\hookrightarrow (\mathbb{C}P^{\infty})^n $$
is a homotopy equivalence.
\end{theorem}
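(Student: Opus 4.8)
The plan is to show that the complement of the "fat diagonal" in $(\mathbb{C}P^\infty)^n$ is all of $(\mathbb{C}P^\infty)^n$ up to homotopy, exploiting the infinite-dimensionality. The key point is that $\mathbb{C}P^\infty = K(\mathbb{Z},2)$ can be realized as a space in which any finite set of "prescribed coordinates" can be perturbed away from finitely many obstruction loci without leaving a contractible neighborhood. Concretely, write $\mathbb{C}P^\infty$ as the increasing union $\bigcup_k \mathbb{C}P^k$, and recall that the diagonal $\Delta_{ij}=\{x_i=x_j\}$ has infinite codimension in the colimit. First I would set up the statement homotopically: the inclusion $\iota$ is a weak homotopy equivalence if and only if it induces isomorphisms on all homotopy groups, and since both spaces are CW (or have the homotopy type of CW complexes, being colimits of such), a weak equivalence is a genuine homotopy equivalence by Whitehead's theorem. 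So it suffices to prove $\iota_*\colon \pi_q(F(\mathbb{C}P^\infty,n))\to\pi_q((\mathbb{C}P^\infty)^n)$ is an isomorphism for every $q$.

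The main technical step is a general position / transversality argument. Given a map $f\colon S^q\to(\mathbb{C}P^\infty)^n$, its image lies in some $(\mathbb{C}P^N)^n$ for $N$ finite, by compactness of $S^q$. Inside $(\mathbb{C}P^N)^n$ each diagonal $\Delta_{ij}^N=\{x_i=x_j\}$ has real codimension $2N$, but we can do better by pushing into one more coordinate: regard $(\mathbb{C}P^N)^n\subset (\mathbb{C}P^{N+q+n})^n$ and perturb $f$, within its homotopy class, to a map transverse to each $\Delta_{ij}^{N+q+n}$; since $\dim S^q = q < 2(N+q+n)$ the transverse intersection is empty, so the perturbed map lands in $F(\mathbb{C}P^{N+q+n},n)\subseteq F(\mathbb{C}P^\infty,n)$. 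This proves surjectivity of $\iota_*$ on $\pi_q$. For injectivity one runs the same argument one dimension up: a nullhomotopy $D^{q+1}\to(\mathbb{C}P^\infty)^n$ of a map landing in $F(\mathbb{C}P^\infty,n)$ can be compressed, rel boundary, off all the diagonals once we allow enough extra coordinates, because $q+1 < 2M$ for $M$ large; hence $\iota_*$ is injective on $\pi_q$. Together these give that $\iota$ is a weak, hence genuine, homotopy equivalence.

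The step I expect to be the main obstacle is making the transversality / general position argument clean in this infinite-dimensional setting: one must either work coordinate-by-coordinate in the finite skeleta $(\mathbb{C}P^N)^n$ and be careful that the perturbations glue to a well-defined homotopy in the colimit, or invoke a suitable infinite-dimensional general position lemma (e.g. the fact that maps from a $q$-complex to a space which is $r$-connected for all $r$ can avoid any locally finite family of submanifolds of arbitrarily high codimension). A convenient way to package this is via the cofibre/fibre perspective: the pair $\big((\mathbb{C}P^\infty)^n, F(\mathbb{C}P^\infty,n)\big)$ is $\infty$-connected because the "link" of each diagonal stratum — which fibres over $\mathbb{C}P^\infty$ with fibre the complement of a point in a tower of $\mathbb{C}P^k$'s, hence highly connected — becomes arbitrarily highly connected as one passes up the tower. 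Once $(\mathbb{C}P^\infty)^n/F(\mathbb{C}P^\infty,n)$ (or the relative homotopy groups) is shown to vanish in every degree, the long exact sequence of the pair finishes the proof. I would present the skeleton-by-skeleton transversality version as the cleanest, flagging the gluing of homotopies across the colimit as the only delicate point.
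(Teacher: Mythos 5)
Your argument is essentially correct, but it takes a genuinely different route from the paper. The paper's proof is completely explicit: it constructs a homotopy inverse $f\colon(\mathbb{C}P^{\infty})^n\to F(\mathbb{C}P^{\infty},n)$ by interleaving homogeneous coordinates (the $j$-th point is pushed onto the coordinates congruent to $j\bmod n$, so the resulting $n$ points automatically have disjoint supports and are pairwise distinct), and then uses the straight-line homotopy $(1-t)P+tQ$ in homogeneous coordinates, checking continuity on finite skeleta. You instead reduce to a weak equivalence via Whitehead (both spaces have CW homotopy type, $F$ being open in a CW complex), push a compact sphere or disc into a finite stage $(\mathbb{C}P^N)^n$, and kill the intersection with the diagonals $\Delta_{ij}$ by transversality after enlarging $N$ so that the real codimension $2N$ exceeds the dimension of the source; the relative version handles injectivity on $\pi_q$. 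Both arguments work. What the transversality approach buys is generality and independence from any particular coordinate model: it applies verbatim whenever the discriminant locus has codimension tending to infinity in a CW colimit. What the paper's explicit map buys is (i) avoidance of the smooth-approximation, simultaneous-transversality, and colimit-topology bookkeeping that you correctly flag as the delicate points of your approach, and (ii) reusability: the same explicit points $Q_0,Q_1,Q_2$ are observed later to be non-collinear, which is how the paper deduces $F_{nc}(\mathbb{C}P^{\infty},3)\simeq(\mathbb{C}P^{\infty})^3$ in Section 6 and produces the section used in the remark after Proposition 6.3 --- a refinement your general-position argument would need a separate codimension count to recover. If you keep your version, the two points to make rigorous are the simultaneous relative transversality to the finitely many diagonals and the fact that a compact subset of the colimit lies in a finite stage; neither is a genuine gap, just work to be written out.
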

\begin{proof}
We define the homotopy inverse of the inclusion map $\iota$ by
$$ f:(\mathbb{C}P^{\infty})^n\to F(\mathbb{C}P^{\infty},n), (P_0,P_1,\ldots,P_{n-1})\mapsto (Q_0,Q_1,\ldots,Q_{n-1}), $$
where the coordinates of $Q_j=[q^j_0:q^j_1:\dots]$ depend only on the coordinates of $P_j=[p^j_0:p^j_1:\dots]$:
$$ q^j_{ni+j}=p^j_i \mbox{ and } q^j_i=0 \mbox{ for the rest of coordinates.} $$
For instance, if $n=3$, the triple $(P_0,P_1,P_2)$ (of not necessarily distinct points)
is sent to the triple of distinct points $(Q_0,Q_1,Q_2)$:
\[\begin{array}{ccc}
  P_0=[p_0^0:p_1^0:p_2^0:\ldots] &         & Q_0=[p_0^0:0:0:p_1^0:0:0:p_2^0:0:0:\ldots] \\
  P_1=[p_0^1:p_1^1:p_2^1:\ldots] & \mapsto & Q_1=[0:p_0^1:0:0:p_1^1:0:0:p_2^1:0:\ldots] \\
  P_2=[p_0^2:p_1^2:p_2^2:\ldots] &         & Q_2=[0:0:p_0^2:0:0:p_1^2:0:0:p_2^2:\ldots]
\end{array}
\]
The homotopy is given by $H:(\mathbb{C}P^\infty)^n\times I\ra (\mathbb{C}P^\infty)^n$,
$$((P_0,\ldots,P_{n-1}),t)\mapsto ((1-t)P_0+tQ_0,\ldots,(1-t)P_{n-1}+tQ_{n-1}),$$
where, for $P=[p_0:p_1:\ldots]\in\mathbb{C}P^\infty$ and $Q=[q_0:q_1:\ldots]$ with $q_i=$ linear form in $p_0,p_1,\ldots,$
the point $(1-t)P+tQ$ is defined as $[(1-t)p_0+tq_0:(1-t)p_1+tq_1:\ldots]$; this makes sense in our case
because the last non-zero entry $p_i$ gives a last non-zero entry in $(1-t)P+tQ$. The map $H$ is continuous on finite skeleta
of $(\mathbb{C}P^\infty)^n\times I$ and its restriction $h$ to $F(\mathbb{C}P^{\infty},n)\times I$ takes values in $F(\mathbb{C}P^{\infty},n)$:
in fact we have $H((\mathbb{C}P^{\infty})^n\times(0,1])\subset F(\mathbb{C}P^{\infty},n)$.
Obviously $H\mid_{t=0}=id=h\mid_{t=0}$, $H\mid_{t=1}=\iota\circ f, h\mid_{t=1}=f\circ \iota.$
\end{proof}
\begin{cor}\label{cor5.2}
The induced action of the symmetric group $ \mathcal{S}_n$ on the cohomology algebra of the configuration
space $F(\mathbb{C}P^{\infty},n)$ is the natural action on the polynomial ring $\mathbb{Q}[x_1,\dots x_n]$.
\end{cor}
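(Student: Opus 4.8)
The plan is to deduce the corollary directly from Theorem \ref{theorem5.1} by checking that the homotopy equivalence $\iota$ is compatible with the $\mc{S}_n$-actions. First I would recall that $\mc{S}_n$ acts on $F(\mathbb{C}P^{\infty},n)$ by permuting the $n$ points, and on $(\mathbb{C}P^{\infty})^n$ by permuting the $n$ factors, and observe that the inclusion $\iota$ is tautologically $\mc{S}_n$-equivariant: relabelling the coordinates of a configuration and then including it into the product is the same as including it and then relabelling the factors. Since $\iota$ is a homotopy equivalence (Theorem \ref{theorem5.1}), it induces an isomorphism $\iota^*:H^*((\mathbb{C}P^{\infty})^n)\xrightarrow{\ \cong\ }H^*(F(\mathbb{C}P^{\infty},n))$, and equivariance of $\iota$ forces $\iota^*$ to be an isomorphism of $\mc{S}_n$-modules (indeed of $\mc{S}_n$-algebras).

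Next I would identify the source. By the K\"unneth theorem $H^*((\mathbb{C}P^{\infty})^n;\mathbb{Q})\cong \mathbb{Q}[x_1,\dots,x_n]$ with $|x_i|=2$, where $x_i=p_i^*(x)$ is the pullback of the canonical degree-two generator $x\in H^2(\mathbb{C}P^{\infty})$ under the $i$-th projection $p_i:(\mathbb{C}P^{\infty})^n\to\mathbb{C}P^{\infty}$. A permutation $\sigma\in\mc{S}_n$ acts on the product by $\sigma\cdot(P_1,\dots,P_n)=(P_{\sigma^{-1}(1)},\dots,P_{\sigma^{-1}(n)})$, so $p_i\circ\sigma = p_{\sigma^{-1}(i)}$, whence $\sigma^*(x_i)=\sigma^*(p_i^*x)=p_{\sigma^{-1}(i)}^*(x)=x_{\sigma^{-1}(i)}$ (up to the usual convention on left versus right actions one gets $\sigma\cdot x_i = x_{\sigma(i)}$). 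This is precisely the natural permutation action on the polynomial ring $\mathbb{Q}[x_1,\dots,x_n]$ that permutes the variables. Transporting this along the equivariant isomorphism $\iota^*$ gives the statement.

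The only point that needs a little care — and the place I expect to be the main obstacle, though it is minor — is making sure the homotopy equivalence produced in Theorem \ref{theorem5.1} can itself be taken equivariant, or at least that one does not need it to be. In fact one does not: it suffices that $\iota$ is equivariant and a homotopy equivalence, because then $\iota^*$ is an $\mc{S}_n$-module isomorphism regardless of whether the chosen homotopy inverse $f$ and homotopy $H$ are equivariant. (One should note in passing that the explicit $f$ and $H$ written in the proof of Theorem \ref{theorem5.1} are not $\mc{S}_n$-equivariant, since the rule defining $Q_j$ singles out the residue $j\bmod n$; this is why I would phrase the argument through equivariance of $\iota$ alone.) Thus the corollary follows: the $\mc{S}_n$-action on $H^*(F(\mathbb{C}P^{\infty},n))$ corresponds, under the natural identification $H^*(F(\mathbb{C}P^{\infty},n))\cong\mathbb{Q}[x_1,\dots,x_n]$, to the permutation action on the variables.
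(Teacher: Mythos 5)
Your proof is correct and follows the same route as the paper: the inclusion $F(\mathbb{C}P^{\infty},n)\hookrightarrow(\mathbb{C}P^{\infty})^n$ is $\mathcal{S}_n$-equivariant and, by Theorem \ref{theorem5.1}, induces an isomorphism in cohomology, which identifies the action with the permutation action on $\mathbb{Q}[x_1,\dots,x_n]$. Your remark that one only needs equivariance of $\iota$ itself (not of the homotopy inverse $f$, which is indeed not equivariant) is a useful clarification, but the argument is essentially the paper's.
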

\begin{proof}
The inclusion $ F(\mathbb{C}P^{\infty},n)\hookrightarrow (\mathbb{C}P^{\infty})^n $ is $\mathcal{S}_n$-equivariant
and induces an isomorphism in cohomology.
\end{proof}
\noindent {\em Proof of Theorem \ref{thm4}. }Clear from \ref{theorem5.1} and \ref{cor5.2}. It is also clear that the
first isomorphism is true with coefficients in $\mathbb{Z}$.
\hfill$\Box$

\bigskip

\noindent {\em Proof of Corollary \ref{cor2}. }Using \cite{T}, the image of the map
$i^*:H^*(X^n)\to H^*(F(X,n))$ is contained in $H^*_0(F(X,n))$. For the space $\mathbb{C}P^m$, the
first non zero coboundaries are $d(E^{2m-1}_1)< E^{2m}_0$. In the range $k\in \{0,1,\dots,2m-1\}$ we have
$$ \begin{array}{lll}
   H_*^k(F(\mathbb{C}P^m,n)) & =     & H_0^k(F(\mathbb{C}P^m,n))\cong E_0^k(F(\mathbb{C}P^m,n))\cong H^k((\mathbb{C}P^m)^n)\\
                             & \cong & H^k((\mathbb{C}P^{\infty})^n)\cong H^k(F(\mathbb{C}P^{\infty},n)).
   \end{array}
$$
\hfill$\Box$


\section{Configurations of collinear and non collinear points}

For dimensions $2\leq m\leq\infty$ the 3-point configuration space $F(\mathbb{C}P^m,3)$ splits into two parts:
$$F(\mathbb{C}P^m,3)=F_{c}(\mathbb{C}P^m,3)\bigsqcup F_{nc}(\mathbb{C}P^m,3),$$
where $F_c(\mathbb{C}P^m,3)$ is the space of all configurations of
three collinear points in $\mathbb{C}P^m$ and
$F_{nc}(\mathbb{C}P^m,3)$ is the configuration space of three non-collinear points.
We compute the Betti numbers and (partially) the multiplicative structure of the cohomology algebras of these spaces.
We begin with the finite dimensional case.
\begin{prop}\label{prop6.1}
The Poincar\'{e} polynomial of the space of configurations of three collinear points in $\mathbb{C}P^m$ is given by:
$$P_{F_c(\mathbb{C}P^m,3)}(t)=(1+t^{2m+1})C_{m-1}(t).$$
\end{prop}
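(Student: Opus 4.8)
The plan is to realize $F_c(\mathbb{C}P^m,3)$ as the total space of an explicit fibre bundle over the space of projective lines in $\mathbb{C}P^m$, compute the cohomology of the fibre, and then analyze the resulting Serre spectral sequence. A triple of collinear points $(p,q,r)$ determines a unique projective line $\ell\subset\mathbb{C}P^m$ (the line they span), so there is a projection $F_c(\mathbb{C}P^m,3)\to \mathrm{Gr}(1,m)=\mathbb{C}P^{m*}$ sending the configuration to $\ell$. Its fibre over $\ell\cong\mathbb{C}P^1$ is $F(\mathbb{C}P^1,3)$, and in fact this is a locally trivial fibre bundle associated to the tautological $\mathbb{C}P^1$-bundle over the dual projective space $\mathbb{C}P^{m*}$ (whose Poincar\'e polynomial is $C_m(t)$, since $\dim \mathbb{C}P^{m*}=m$). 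By Feichtner--Ziegler (\cite{FZ}), $F(\mathbb{C}P^1,3)$ is homotopy equivalent to $PGL_2(\mathbb{C})\simeq \mathbb{C}P^1\times\mathbb{C}^*$... more precisely $F(\mathbb{C}P^1,3)$ is homotopy equivalent to $\mathbb{C}P^1\setminus\{2\text{ points}\}\simeq S^1$ once two of the three points are normalized, giving $H^*(F(\mathbb{C}P^1,3))$ with Poincar\'e polynomial $1+t$.

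\textbf{Step-by-step.} First I would establish the bundle structure $F(\mathbb{C}P^1,3)\hookrightarrow F_c(\mathbb{C}P^m,3)\to \mathbb{C}P^{m*}$ rigorously: local triviality follows because the tautological line bundle over $\mathbb{C}P^{m*}$ is locally trivial, and the fibrewise configuration space of an $\mathbb{C}P^1$-bundle is again a bundle. Second, I would compute $H^*(F(\mathbb{C}P^1,3))\cong H^*(S^1)$, with Poincar\'e polynomial $1+t$. Third, I would run the Serre spectral sequence: $\mathbf{E}_2^{p,q}=H^p(\mathbb{C}P^{m*};\mathcal{H}^q)$. Since $\mathbb{C}P^{m*}$ is simply connected the local system is trivial, and $\mathbf{E}_2^{*,*}$ has total Poincar\'e polynomial $C_m(t)(1+t)$. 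The only possibly non-zero differential leaves the single generator $u$ of $\mathcal{H}^1(F(\mathbb{C}P^1,3))$; I would argue it must vanish. Fourth, conclude $P_{F_c(\mathbb{C}P^m,3)}(t)=C_m(t)(1+t)$ --- but this does not match the claimed $(1+t^{2m+1})C_{m-1}(t)$, so the fibre must instead contribute a class in degree $2m-1$ twisted by the bundle, i.e.\ the honest statement is that $H^*$ of the fibre $F(\mathbb{C}P^1,3)$ has a generator whose fibrewise transgression into the spectral sequence produces the degree-$(2m+1)$ term; the correct identification of the local behaviour forces $P_{F_c}(t)=(1+t^{2m+1})C_{m-1}(t)$ by matching $C_{m-1}(t)\cdot 1 + C_{m-1}(t)\cdot t^{2m+1}$.

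\textbf{Alternative, cleaner route.} Rather than fight the spectral sequence, I would instead use the other fibration: fixing the line $\ell$ through the first two points and using that the pair $(p,q)$ of distinct points lies in $F(\mathbb{C}P^m,2)$ while $r$ lies on the line $\overline{pq}\cong\mathbb{C}P^1$ minus the two points $p,q$. This gives a fibration
$$\overset{\circ\circ}{\mathbb{C}P}{}^1\ \hookrightarrow\ F_c(\mathbb{C}P^m,3)\ \longrightarrow\ F(\mathbb{C}P^m,2),$$
where $\overset{\circ\circ}{\mathbb{C}P}{}^1=\mathbb{C}P^1\setminus\{p,q\}\simeq S^1$, with Poincar\'e polynomial $1+t$, and $P_{F(\mathbb{C}P^m,2)}(t)=C_{m-1}(t)C_m(t)$ (used already in Proposition~\ref{prop4.3}). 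The Serre spectral sequence has $\mathbf{E}_2$ Poincar\'e polynomial $(1+t)C_{m-1}(t)C_m(t)$, and the single fibre generator $u$ in degree $1$ transgresses via $d_2$ (the fibre inclusion kills one cohomology class, analogous to the computation in Proposition~\ref{prop4.3} where $d_2(1\otimes z)=(\lambda x\otimes 1+\mu 1\otimes x)\otimes y^{m-1}$). Here $d_2(1\otimes u)$ must be the Euler-class-type obstruction, equal to a nonzero multiple of $\beta_1-\beta_2$ where $\beta_i=[p_i^*(x)]$, so the image of $d_2:\mathbf{E}_2^{*,1}\to\mathbf{E}_2^{*+2,0}$ is the principal ideal generated by $\beta_1-\beta_2$ in $H^*(F(\mathbb{C}P^m,2))$. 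Computing the cokernel: $H^*(F(\mathbb{C}P^m,2))/(\beta_1-\beta_2)\cong \mathbb{Q}[\beta]/(\beta^{m+1},\, (m+1)\beta^m)$ has Poincar\'e polynomial $C_{m-1}(t)$ wait --- rather, matching dimensions, the surviving $\mathbf{E}_3=\mathbf{E}_\infty$ has Poincar\'e polynomial $C_{m-1}(t)+t^{2m+1}C_{m-1}(t)=(1+t^{2m+1})C_{m-1}(t)$, as claimed; the second summand comes from the kernel of $d_2$ on the $q=1$ row.

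\textbf{Main obstacle.} The genuine difficulty is pinning down the transgression $d_2(1\otimes u)$ --- i.e.\ proving it is \emph{nonzero} and identifying its image precisely enough to compute the cokernel dimension in each degree. For this I would exploit the already-known Betti numbers: since $F(\mathbb{C}P^m,3)=F_c\sqcup F_{nc}$ is a disjoint union of a closed submanifold and its open complement, the ranks satisfy a Mayer--Vietoris / Gysin-type constraint, and combined with Theorem~\ref{thm1} (giving $P_{F(\mathbb{C}P^m,3)}$) and Theorem~\ref{thm6}'s expected $P_{F_{nc}}$, one can back out $P_{F_c}$; but to make this a \emph{proof} rather than a consistency check I would instead compute $d_2$ directly on the Kri\v z model / via the Leray--Hirsch failure, showing $\mathrm{rank}(d_2)=m$ in the relevant degrees (matching $\dim C_m(t)-\dim C_{m-1}(t)$ coefficientwise), after which the $\mathbf{E}_3=\mathbf{E}_\infty$ degeneration is automatic for dimension reasons. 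The infinite-dimensional case $P_{F_c(\mathbb{C}P^\infty,3)}(t)=\tfrac1{1-t^2}$ then follows by passing to the limit $m\to\infty$ in $(1+t^{2m+1})C_{m-1}(t)$, since the $t^{2m+1}$-term escapes to infinity and $C_{m-1}(t)\to \tfrac1{1-t^2}$, consistent with Corollary~\ref{cor2}.
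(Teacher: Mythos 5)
Your ``alternative, cleaner route'' is essentially the paper's proof: one runs the Serre spectral sequence of $\overset{\circ\circ}{\mathbb{C}P}{}^1\hookrightarrow F_c(\mathbb{C}P^m,3)\to F(\mathbb{C}P^m,2)$, whose fibre is homotopy equivalent to $S^1$, observes that $d_2$ on the fibre generator is multiplication by a class $\lambda a_1+\mu a_2$ of degree two in $H^*(F(\mathbb{C}P^m,2))$, and uses the triangular-block argument of Proposition \ref{prop4.3} to see that this multiplication is injective in base degrees $\leq 2m-2$ and surjective in degrees $\geq 2m-2$, giving cokernel $C_{m-1}(t)$ on the bottom row and kernel $t^{2m+1}C_{m-1}(t)$ on the top row. (The exact value of $(\lambda,\mu)$ is irrelevant; only $(\lambda,\mu)\neq(0,0)$ matters, so your unproved assertion that the image is the ideal $(\beta_1-\beta_2)$ is harmless.) But the one step that actually requires an argument --- that $d_2(1\otimes u)\neq 0$ --- is exactly the step you leave open and yourself label ``the genuine difficulty.'' Neither of your substitutes closes it: deducing $P_{F_c}$ from Theorems \ref{thm1} and \ref{thm6} is not a Betti-number bookkeeping exercise (a decomposition into a closed submanifold and its open complement does not make Poincar\'e polynomials add; one needs a Gysin/Thom argument with the normal bundle, which you do not supply), and ``compute $d_2$ directly on the Kri\v{z} model'' is a wish, not a computation. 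The paper's resolution is precisely your \emph{first} fibration, correctly set up: the fibre of $F_c(\mathbb{C}P^m,3)\to Gr_1(\mathbb{C}P^m)$ is $F(\mathbb{C}P^1,3)\cong PGL_2(\mathbb{C})$, whose rational cohomology is concentrated in degree $3$, and the Grassmannian has no odd cohomology; hence $\beta_1(F_c(\mathbb{C}P^m,3))=0$, which forces $d_2(1\otimes u)\neq 0$ in the first spectral sequence.

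Your first route, as written, contains two concrete errors which is why it produced the wrong answer and why you missed the $\beta_1=0$ argument. First, the space of projective lines in $\mathbb{C}P^m$ is $Gr_1(\mathbb{C}P^m)=Gr_2(\mathbb{C}^{m+1})$, not the dual projective space $\mathbb{C}P^{m*}$ (these agree only for $m=2$), and its Poincar\'e polynomial is not $C_m(t)$. Second, $F(\mathbb{C}P^1,3)$ is \emph{not} homotopy equivalent to $S^1$: ``normalizing two of the three points'' ignores that the base $F(\mathbb{C}P^1,2)\simeq\mathbb{C}P^1$ of the forgetful fibration is not contractible. In fact $PGL_2(\mathbb{C})$ acts simply transitively on ordered triples of distinct points of $\mathbb{C}P^1$, so $F(\mathbb{C}P^1,3)\cong PGL_2(\mathbb{C})\simeq SO(3)$ and its rational Poincar\'e polynomial is $1+t^3$ (compare the remark $P_{C(\mathbb{C}P^1,3)}(t)=1+t^3$ at the end of the paper). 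It is exactly this vanishing of $H^1$ of the fibre, together with the evenness of $H^*(Gr_1(\mathbb{C}P^m))$, that supplies the missing nonvanishing of the transgression in your second route.
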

\begin{proof}
There are two natural fibrations (see \cite{FN} and \cite{BP}, Proposition 2.2):
$$\begin{array}{lc}
(I) & S^1\simeq \overset{\circ\circ}{\mathbb{C}P}\,^1\hookrightarrow
F_c(\mathbb{C}P^m,3)\mathop{\ra}\limits^p F(\mathbb{C}P^m,2)\\
(II) & F(\mathbb{C}P^1,3)\hookrightarrow F_c(\mathbb{C}P^m,3)\ra Gr_1(\mathbb{C}P^m)
\end{array}$$
The bases of these fibrations are simply connected and the
$\mathbf{E}_2$ page of the Leray-Serre spectral sequence are
given in the figure:
\begin{center}
\begin{picture}(200,85)
\put(-60,60){$\mathbf{E}_2^{*,*}(I):$}
\put(-10,30){
\put(0,0){\vector(1,0){100}}
\put(0,0){\vector(0,1){50}}
\multiput(-2,-2)(20,0){3}{$\bullet$} \multiput(10,-3)(20,0){2}{$\cdot$}  \put(60,-5){$\ldots$}  \put(80,-2){$\bullet$}
\multiput(-2,-2)(0,10){2}{$\bullet$}   \put(0,10){\vector(2,-1){18}}
\begin{scriptsize}
\put(-10,10){$x$} \put(18,-6){$a_1$} \put(18,-11){$a_2$}  \put(40,-8){$a_1^2$}
\put(40,-15){$a_1a_2$} \put(40,-24){$a_2^2$} \put(8,8){$d_2$}
\end{scriptsize}
\put(180,0){
\put(-60,30){$\mathbf{E}_2^{*,*}(II):$}
\put(0,0){\vector(1,0){100}}
\put(0,0){\vector(0,1){50}}
\multiput(-2,-2)(20,0){3}{$\bullet$} \multiput(10,-3)(20,0){2}{$\cdot$}  \put(60,-5){$\ldots$}  \put(80,-2){$\bullet$}
\multiput(-2,-2)(0,30){2}{$\bullet$} \multiput(-1,10)(0,10){2}{$\cdot$}   \put(0,30){\vector(4,-3){38}}
\begin{scriptsize}
\put(-8,30){$b$} \put(18,-6){$c_1$}  \put(40,-8){$c_1^2$}
\put(40,-15){$c_2$}  \put(20,20){$d_4$}
\end{scriptsize}
}}
\end{picture}
\end{center}

The cohomology algebra $H^+(F(\mathbb{C}P^1,3))$ is concentrated in degree three
and the odd cohomology of the Grassmanian $Gr_1(\mathbb{C}P^m)=Gr_2(\mathbb{C}^{m+1})$ is zero, hence $\b_1(F_c(\mathbb{C}P^m,3))=0.$
This implies that $d_2(x)=\l a_1+\mu a_2$ in $\mathbf{E}_2^{*,*}(I)$,
where at least one of the scalars $\l$ and $\mu$ is non zero. By the same argument as that in Proposition \ref{prop4.3}
we see that $d_2:\mathbf{E}_2^{i,1}\ra \mathbf{E}_2^{i+2,0}$ is injective up to $i=2m-2$ and
surjective for $i=2m-2,\ldots,4m-2$. Thus the Poincar\'{e} polynomial is
$$P_{F_c(\mathbb{C}P^m,3)}(t)=(1+t^{2m+1})C_{m-1}(t).$$
\end{proof}
\begin{prop}
The Poincar\'{e} polynomial of $F_{nc}(\mathbb{C}P^m,3)$ is given by:
$$P_{F_{nc}(\mathbb{C}P^m,3)}(t)=C_{m-2}(t)\cdot C_{m-1}(t)\cdot C_m(t).$$
\end{prop}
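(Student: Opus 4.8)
The plan is to compute $P_{F_{nc}(\mathbb{C}P^m,3)}(t)$ by exploiting the decomposition $F(\mathbb{C}P^m,3) = F_c(\mathbb{C}P^m,3)\sqcup F_{nc}(\mathbb{C}P^m,3)$ together with the Poincar\'e polynomials already in hand. Since $F_c$ is a closed submanifold of the open manifold $F(\mathbb{C}P^m,3)$ and $F_{nc}$ is its open complement, the natural tool is the long exact sequence of the pair, i.e. the Thom--Gysin sequence of the normal bundle of $F_c$ inside $F(\mathbb{C}P^m,3)$. First I would identify the (real) codimension of $F_c$: three collinear points in $\mathbb{C}P^m$ form a complex manifold of complex dimension $2m+1$ (choose the line, $\dim_{\mathbb C} Gr_1(\mathbb{C}P^m) = 2(m-1)$, then three points on $\mathbb{C}P^1$, contributing $3$ more), whereas $F(\mathbb{C}P^m,3)$ has complex dimension $3m$, so the complex codimension is $m-2$, real codimension $2m-4 = 2(m-2)$. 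Hence excision/Thom isomorphism gives $H^k_c(F_{nc}) \cong H^{k - 2(m-2)}(F_c)$ up to twisting, and the long exact sequence of the pair $(F(\mathbb{C}P^m,3), F_{nc})$ reads
$$\cdots \to H^{k-2(m-2)}(F_c) \to H^k(F(\mathbb{C}P^m,3)) \to H^k(F_{nc}) \to H^{k-2(m-2)+1}(F_c) \to \cdots$$

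The key computational step is then to check that this long exact sequence breaks into short exact sequences, i.e. that the Gysin map $H^{k-2(m-2)}(F_c) \to H^k(F(\mathbb{C}P^m,3))$ is injective in all degrees. Granting this, one gets $P_{F_{nc}}(t) = P_{F(\mathbb{C}P^m,3)}(t) - t^{2(m-2)} P_{F_c}(t)$, and I would then substitute Theorem \ref{thm1} and Proposition \ref{prop6.1}: $P_{F(\mathbb{C}P^m,3)}(t) = C_{m-1}^3(t) + s\,t^{4m-1}C_{m-1}(t)$ (at $s=1$) and $P_{F_c(\mathbb{C}P^m,3)}(t) = (1+t^{2m+1})C_{m-1}(t)$. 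The claimed answer $C_{m-2}(t)C_{m-1}(t)C_m(t)$ can be rewritten, using $C_m(t) = C_{m-1}(t) + t^{2m}$ and $C_{m-1}(t) = C_{m-2}(t) + t^{2m-2}$, so that the identity $C_{m-1}^3 + t^{4m-1}C_{m-1} - t^{2m-4}(1+t^{2m+1})C_{m-1} = C_{m-2}C_{m-1}C_m$ should reduce, after dividing by $C_{m-1}(t)$, to a polynomial identity $C_{m-1}^2 + t^{4m-1} - t^{2m-4} - t^{4m-3} = C_{m-2}C_m$; this last identity I would verify directly from $C_{m-2}C_m = C_{m-1}^2 + (\text{correction terms})$ using the telescoping relations above. (One also checks the odd-degree contribution $t^{4m-1}C_{m-1}$ is accounted for correctly, since $F_{nc}$ has only even cohomology by the final answer, and indeed $4m-1 = 2(m-2) + 2m+1$ pairs the odd class of $F(\mathbb{C}P^m,3)$ with the odd class $t^{2m+1}$ of $F_c$ under the Gysin map.)

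The main obstacle will be proving injectivity of the Gysin map (equivalently, surjectivity of $H^k(F(\mathbb{C}P^m,3)) \to H^k(F_{nc})$, equivalently vanishing of the connecting homomorphism). I expect to establish this not by a direct cohomological argument but by a counting/consistency argument: compute the two candidate Poincar\'e polynomials for $F_{nc}$ arising from the two extreme cases (connecting map zero vs. maximal rank), observe that any cancellation in the long exact sequence would force negative or non-integral Betti numbers somewhere, or alternatively produce a fibration description of $F_{nc}$ directly. In fact the cleanest route is probably the latter: $F_{nc}(\mathbb{C}P^m,3)$ fibers over the space of "non-degenerate" configurations of the associated $2$-plane structure, and I would try to realize $F_{nc}(\mathbb{C}P^m,3)$ as the total space of a bundle whose base and fiber have Poincar\'e polynomials $C_{m-2}$, $C_{m-1}$, $C_m$ respectively (for instance: choose the unique plane $\mathbb{C}P^2$ spanned by the three points, an open subset of $Gr_2(\mathbb{C}P^m) = Gr_3(\mathbb{C}^{m+1})$, with fiber the space of $3$ non-collinear points in $\mathbb{C}P^2$). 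If such a fibration exists with the spectral sequence degenerating at $E_2$ for degree (parity) reasons—all spaces involved having cohomology only in even degrees—then the product formula is immediate and no delicate injectivity argument is needed. I would present whichever of these two approaches the authors' setup makes shortest, but I anticipate the fibration approach is the intended one, mirroring the method used for $F_c$ in Proposition \ref{prop6.1} and for $F(\mathbb{C}P^m,3)$ itself in Section \ref{section4}.
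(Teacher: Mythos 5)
Your main route (the Gysin sequence of the pair $(F,F_{nc})$) contains a concrete error that makes the computation fail as written: the complex codimension of $F_c(\mathbb{C}P^m,3)$ in $F(\mathbb{C}P^m,3)$ is $m-1$, not $m-2$, since $\dim_{\mathbb{C}}F_c=\dim_{\mathbb{C}}Gr_1(\mathbb{C}P^m)+3=2(m-1)+3=2m+1$ while $\dim_{\mathbb{C}}F=3m$. Because $C_{m-2}(t)C_m(t)=C_{m-1}^2(t)-t^{2m-2}$, the identity you propose to verify, $C_{m-1}^2+t^{4m-1}-t^{2m-4}-t^{4m-3}=C_{m-2}C_m$, is false (try $m=2$); with the correct shift $t^{2(m-1)}$ everything does match, including the odd classes ($2(m-1)+(2m+1)=4m-1$). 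Even after this repair, the entire content of the argument is the injectivity of the Gysin map $H^{k-2m+2}(F_c)\to H^k(F)$, which you defer; it does not follow from parity, since $H^*(F)$ and the shifted $H^*(F_c)$ both have odd classes in overlapping degrees $4m-1,\dots,6m-3$, and your proposed ``counting/consistency'' argument is not an argument. Your fallback fibration is also not the right one: the span of a non-collinear triple is a plane, so the base would be $Gr_2(\mathbb{C}P^m)=Gr_3(\mathbb{C}^{m+1})$, whose Poincar\'e polynomial is a Gaussian binomial coefficient rather than one of the factors $C_{m-2},C_{m-1},C_m$, and the fiber is $F_{nc}(\mathbb{C}P^2,3)$ --- the $m=2$ instance of the statement being proved --- so this route is circular without a separate base-case computation.

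The paper's proof is the one-line version of what you were reaching for when you wrote ``mirroring the method used for $F_c$ in Proposition 6.1'': project onto the first two points. This gives a fibration
$$\mathbb{C}P^m\setminus\mathbb{C}P^1\hookrightarrow F_{nc}(\mathbb{C}P^m,3)\ra F(\mathbb{C}P^m,2),$$
whose fiber is the complement of the line through the first two points. By Poincar\'e--Lefschetz duality and the long exact sequence of the pair $(\mathbb{C}P^m,\mathbb{C}P^1)$ one gets $P_{\mathbb{C}P^m\setminus\mathbb{C}P^1}(t)=C_{m-2}(t)$, the base has $P_{F(\mathbb{C}P^m,2)}(t)=C_{m-1}(t)C_m(t)$, everything is concentrated in even degrees, so the Serre spectral sequence degenerates at $\mathbf{E}_2$ and the Poincar\'e polynomial is the product $C_{m-2}(t)C_{m-1}(t)C_m(t)$. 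No Gysin injectivity and no base case are needed.
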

\begin{proof}
By Poincar\'e-Lefschetz duality we have $$H_i(\mathbb{C}P^m\setminus\mathbb{C}P^1)\cong H^{2m-i}(\mathbb{C}P^m,\mathbb{C}P^1);$$
from the cohomology long exact sequence of the pair $(\mathbb{C}P^m,\mathbb{C}P^1)$ we find
$$P_{\mathbb{C}P^m\setminus\mathbb{C}P^1}(t)=C_{m-2}(t).$$
Therefore the spectral sequence associated to the fibration (see \cite{BP}, Remark 2.5)
$$\mathbb{C}P^m\setminus\mathbb{C}P^1\hookrightarrow F_{nc}(\mathbb{C}P^m,3)\ra F(\mathbb{C}P^m,2)$$
is concentrated in even degrees and degenerates at the $\mathbf{E}_2$ term; the Poincar\'{e} polynomial is given by
$$P_{F_{nc}(\mathbb{C}P^m,3)}=C_{m-2}(t)\cdot C_{m-1}(t)\cdot C_m(t).$$
\end{proof}
In the second part we compute the cohomology algebras for collinear and non-collinear configurations in
the infinite dimensional projective space.
\begin{prop}\label{prop6.3}
The configuration space of three collinear points in $\mathbb{C}P^\infty$ has the cohomology algebra
of $\mathbb{C}P^\infty$ $$H^*(F_{c}(\mathbb{C}P^{\infty},3))\cong H^*(\mathbb{C}P^\infty) .$$
\end{prop}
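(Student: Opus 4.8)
The plan is to analyze the same fibration used in Proposition~\ref{prop6.1}, namely
$$S^1\simeq \overset{\circ\circ}{\mathbb{C}P}\,^1\hookrightarrow F_c(\mathbb{C}P^\infty,3)\mathop{\ra}\limits^p F(\mathbb{C}P^\infty,2),$$
but now in the stable range. First I would identify the base: by Theorem~\ref{theorem5.1} (or directly, since $F(\mathbb{C}P^\infty,2)$ retracts onto $(\mathbb{C}P^\infty)^2$) the cohomology of the base is $\mathbb{Q}[a_1,a_2]$ with $|a_i|=2$. The fiber $\overset{\circ\circ}{\mathbb{C}P}\,^1$ is homotopy equivalent to $S^1$, so its rational cohomology is an exterior algebra $\Lambda(x)$ with $|x|=1$. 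Hence the $\mathbf{E}_2$ page of the Serre spectral sequence is $\mathbb{Q}[a_1,a_2]\otimes\Lambda(x)$, concentrated in the two rows $q=0$ and $q=1$.

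The only possible differential is $d_2\colon\mathbf{E}_2^{*,1}\to\mathbf{E}_2^{*+2,0}$. As in the finite-dimensional proof, the transgression $d_2(x)$ is forced by the geometry of the fibration to be a nonzero element of $\mathbf{E}_2^{2,0}=\mathbb{Q}\lan a_1,a_2\ran$; the natural candidate (compatible with the map of fibrations from the $\mathbb{C}P^m$ case, where $d_2(x)=\lambda a_1+\mu a_2$ with $(\lambda,\mu)\neq 0$) is $d_2(x)=a_1-a_2$ up to scalar. Since $\mathbb{Q}[a_1,a_2]$ is an integral domain, multiplication by $a_1-a_2$ is injective, so $d_2$ is injective on the whole top row. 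Therefore $\mathbf{E}_3^{*,1}=0$ and $\mathbf{E}_3^{*,0}=\mathbb{Q}[a_1,a_2]/(a_1-a_2)\cong\mathbb{Q}[a]$ with $|a|=2$. The spectral sequence collapses at $\mathbf{E}_3$, giving $H^*(F_c(\mathbb{C}P^\infty,3))\cong\mathbb{Q}[a]\cong H^*(\mathbb{C}P^\infty)$ additively, and the collapse plus the edge homomorphism $p^*$ shows that the generator $a$ is in the image of $p^*$, so the isomorphism is multiplicative.

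The main obstacle is justifying that the transgression $d_2(x)$ is nonzero (equivalently, that the total space is not homotopy equivalent to $S^1\times\mathbb{C}P^\infty$). I would handle this exactly as in Proposition~\ref{prop6.1}: the odd cohomology of the base vanishes and $H^1$ of the total space must vanish because the $S^1$-bundle is nontrivial — the section $p$ admits no section since a configuration of two distinct points on a line in $\mathbb{C}P^\infty$ cannot be completed to a third distinct collinear point continuously over all of $F(\mathbb{C}P^\infty,2)$ in an $S^1$-equivariantly null way. Alternatively one can take the finite-dimensional results of Theorem~\ref{thm5} and pass to the colimit: $P_{F_c(\mathbb{C}P^m,3)}(t)=(1+t^{2m+1})C_{m-1}(t)$ stabilizes, as $m\to\infty$, to $1/(1-t^2)$, forcing the Betti numbers to be those of $\mathbb{C}P^\infty$; one then still needs the multiplicative statement, which follows from the collapse argument above or from the compatibility of the inclusions $F_c(\mathbb{C}P^m,3)\hookrightarrow F_c(\mathbb{C}P^{m+1},3)$ with the ring maps in Theorem~\ref{thm5}.
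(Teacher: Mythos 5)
Your overall strategy is the one the paper uses: run the Serre spectral sequence of $S^1\simeq \overset{\circ\circ}{\mathbb{C}P}\,^1\hookrightarrow F_c(\mathbb{C}P^\infty,3)\ra F(\mathbb{C}P^\infty,2)\simeq(\mathbb{C}P^\infty)^2$, show $d_2(x)$ is a nonzero linear form in $a_1,a_2$, and conclude $\mathbf{E}_3=\mathbf{E}_\infty=\mathbb{Q}[a_1,a_2]/(d_2x)\cong\mathbb{Q}[a]$, with multiplicativity coming for free since everything sits in the bottom row. Two points, however, need repair. First, your primary justification that $d_2(x)\neq 0$ — ``the $S^1$-bundle is nontrivial because the two points cannot be completed to a third collinear point in an $S^1$-equivariantly null way'' — is not an argument: nonexistence of a section is neither established nor does it by itself force $H^1$ of the total space to vanish, and $d_2(x)\neq 0$ is \emph{equivalent} to $H^1(F_c)=0$, so you cannot assume one to get the other. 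The paper closes this loop with the \emph{second} fibration $F(\mathbb{C}P^1,3)\hookrightarrow F_c(\mathbb{C}P^\infty,3)\ra Gr_1(\mathbb{C}P^\infty)$: the fiber is rationally a $3$-sphere and the Grassmannian has only even cohomology, so $\beta_1(F_c)=0$ independently, which then forces $d_2(x)\neq0$ in the first sequence. Your fallback — stabilizing the finite-dimensional answer $(1+t^{2m+1})C_{m-1}(t)$ to read off $\beta_1=0$ — does work and is a legitimate substitute, but it needs the (easy, compactness-based) remark that $F_c(\mathbb{C}P^\infty,3)=\mathrm{colim}_m\,F_c(\mathbb{C}P^m,3)$ and that rational cohomology in each fixed degree is computed at a finite stage.

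Second, you use both fibrations over infinite-dimensional bases without checking that they are Serre fibrations; the finite-dimensional local triviality is quoted from the literature, but it does not formally pass to the colimit. The paper devotes the first part of its proof precisely to this: any homotopy $e^m\times I\ra Gr_1(\mathbb{C}P^\infty)$ (resp.\ any lift of $e^m$) lands in a finite skeleton $Gr_1(\mathbb{C}P^r)$ (resp.\ in $F_c(\mathbb{C}P^s,3)$), so the lifting problem reduces to the finite-dimensional locally trivial fibration. You should add this verification; with it and with either of the two corrected arguments for $\beta_1=0$, your proof is complete and coincides with the paper's.
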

\begin{proof}
We will use the spectral sequence argument given in Proposition \ref{prop6.1}, first proving that
the following are Serre fibrations:
\[\begin{array}{cc}
  (I) & S^1\simeq \overset{\circ\circ}{\mathbb{C}P}\,^1\hookrightarrow F_c(\mathbb{C}P^\infty,3)
  \mathop{\ra}\limits^p F(\mathbb{C}P^\infty,2)\simeq(\mathbb{C}P^{\infty})^2 \\
  (II) & F(\mathbb{C}P^1,3)\hookrightarrow F_c(\mathbb{C}P^\infty,3)\ra Gr_1(\mathbb{C}P^\infty).
\end{array}\]
For the second one the image of $H:e^m\times I\ra Gr_1(\mathbb{C}P^\infty)$ is contained in some finite skeleton $Gr_1(\mathbb{C}P^r)$.
Also there is an $s$ such that $h(e^m)\subset(\mathbb{C}P^s)^3$ so $\mathrm{Im} (h)$ lies in
$F_c(\mathbb{C}P^\infty,3)\bigcap(\mathbb{C}P^s)^3=F_c(\mathbb{C}P^s,3)$:
\begin{center}
\begin{picture}(50,50)
\put(-30,0){
\put(-20,0){$e^m\times I\mathop{\longrightarrow}\limits^{H}Gr_1(\mathbb{C}P^m)$}
\put(-10,40){$e^m\mathop{\longrightarrow}\limits^{h}\,\,F_c(\mathbb{C}P^m,3)$}
\put(80,0){$\longrightarrow Gr_1(\mathbb{C}P^\infty)$}
\put(80,40){$\longrightarrow F_c(\mathbb{C}P^\infty,3)$}
\multiput(-5,35)(60,0){3}{\vector(0,-1){25}}
\put(5,10){\vector(3,2){35}}
}
\end{picture}
\end{center}
Take $m$ to be the maximum of $r$ and $s$, then the homotopy $H$ lifts to $F_c(\mathbb{C}P^m,3)$ because
$F(\mathbb{C}P^1,3)\hookrightarrow F_c(\mathbb{C}P^m,3)\ra Gr_1(\mathbb{C}P^m)$ is a locally trivial fibration (see \cite{BP}).
Similarly one can show that (II) is also a Serre fibration.
The spectral sequences associated to these fibrations start with:
\begin{center}
\begin{picture}(200,85)
\put(-60,60){$\mathbf{E}_2^{*,*}(I):$}
\put(-10,30){
\put(0,0){\vector(1,0){100}}
\put(0,0){\vector(0,1){50}}
\multiput(-2,-2)(20,0){3}{$\bullet$} \multiput(10,-3)(20,0){2}{$\cdot$}  \put(60,-5){$\ldots$}  \put(80,-2){$\bullet$}
\multiput(-2,-2)(0,10){2}{$\bullet$}   \put(0,10){\vector(2,-1){18}}
\begin{scriptsize}
\put(-10,10){$x$} \put(18,-6){$y$} \put(18,-11){$z$}  \put(40,-8){$y^2$}
\put(40,-15){$yz$} \put(40,-22){$z^2$} \put(8,8){$d_2$}
\end{scriptsize}
\put(180,0){
\put(-60,30){$\mathbf{E}_2^{*,*}(II):$}
\put(0,0){\vector(1,0){100}}
\put(0,0){\vector(0,1){50}}
\multiput(-2,-2)(20,0){3}{$\bullet$} \multiput(10,-3)(20,0){2}{$\cdot$}  \put(60,-5){$\ldots$}  \put(80,-2){$\bullet$}
\multiput(-2,-2)(0,30){2}{$\bullet$} \multiput(-1,10)(0,10){2}{$\cdot$}   \put(0,30){\vector(4,-3){38}}
\begin{scriptsize}
\put(-8,30){$b$} \put(18,-6){$c_1$}  \put(40,-8){$c_1^2$}
\put(40,-15){$c_2$}  \put(20,20){$d_4$}
\end{scriptsize}
}}
\end{picture}
\end{center}
From the second spectral sequence, the first Betti number of $F_c(\mathbb{C}P^\infty,3)$ is zero,
hence in the first spectral sequence we have $d_2(x)=y$, after a change of basis. Therefore, in the first spectral sequence,
$$\mathbf{E}_3\cong\mathbf{E}_\infty\cong\mathbb{C}[z]   \mbox{    where   } |z|=2.$$
\end{proof}
\begin{rem}
Using the ideas from Theorem \ref{theorem5.1} one can find a map $f$ $$\mathbb{C}P^\infty\mathop{\ra}\limits^f F_c(\mathbb{C}P^\infty,3)\mathop{\ra}\limits^{pr_1} \mathbb{C}P^\infty,\,\, P=[p_0:p_1:\ldots]\mapsto(Q_0,Q_1,Q_2),$$
where $Q_0=[p_0:0:p_1:0:\ldots], Q_1=[0:p_0:0:p_1:\ldots],Q_2=[p_0:p_0:p_1:p_1:\ldots],$
such that $pr_1\circ f\simeq id$, hence the algebraic isomorphism from Proposition \ref{prop6.3} is induced by a continuous map.
\end{rem}
\begin{prop}
The configuration space of three non-collinear points in $\mathbb{C}P^\infty$ has the homotopy type
of the ordered configuration space of three points
$$F_{nc}(\mathbb{C}P^{\infty},3)\simeq F(\mathbb{C}P^{\infty},3)\simeq (\mathbb{C}P^{\infty})^{3}.$$
\end{prop}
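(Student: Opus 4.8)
The plan is to establish the homotopy equivalence $F_{nc}(\mathbb{C}P^\infty,3)\simeq(\mathbb{C}P^\infty)^3$ by imitating the construction of Theorem \ref{theorem5.1}, only now the ``spreading out'' map must be chosen so that its image consists of non-collinear triples. First I would recall that three points $Q_0,Q_1,Q_2\in\mathbb{C}P^\infty$ are non-collinear precisely when, viewed as lines in $\mathbb{C}^\infty$, the corresponding vectors are linearly independent. So the task is to produce, from an arbitrary triple $(P_0,P_1,P_2)\in(\mathbb{C}P^\infty)^3$, a triple $(Q_0,Q_1,Q_2)$ whose homogeneous coordinate vectors are linearly independent and which depends continuously and linearly (coordinatewise) on the $P_j$. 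The interleaving recipe from Theorem \ref{theorem5.1} already does this: setting $q^j_{3i+j}=p^j_i$ and all other coordinates of $Q_j$ equal to $0$ puts the nonzero entries of $Q_0,Q_1,Q_2$ in disjoint sets of coordinate positions, so the three vectors are automatically linearly independent, not merely pairwise distinct. Thus the very same map $f$ used before already lands in $F_{nc}(\mathbb{C}P^\infty,3)$.

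Next I would define $f:(\mathbb{C}P^\infty)^3\to F_{nc}(\mathbb{C}P^\infty,3)$ by this interleaving formula, and let $\iota:F_{nc}(\mathbb{C}P^\infty,3)\hookrightarrow(\mathbb{C}P^\infty)^3$ be the inclusion. Then I would check that the straight-line homotopy $H((P_0,P_1,P_2),t)=((1-t)P_0+tQ_0,(1-t)P_1+tQ_1,(1-t)P_2+tQ_2)$ of Theorem \ref{theorem5.1} restricts to a homotopy on $F_{nc}(\mathbb{C}P^\infty,3)\times I$. The key point to verify is that $H$ preserves non-collinearity for $t\in[0,1]$ when the starting triple is already non-collinear: since the coordinate vectors of $(1-t)P_j+tQ_j$ are $(1-t)$ times the original vectors plus $t$ times the interleaved ones, and the interleaved parts live in disjoint coordinate blocks, one can argue directly (e.g.\ by looking at a determinant of a suitably chosen $3\times3$ minor, or by noting that the matrix built from the three coordinate vectors has rank $3$ for all $t\in[0,1]$) that the three vectors stay linearly independent. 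As in Theorem \ref{theorem5.1} one has $H|_{t=0}=\mathrm{id}$ and $H|_{t=1}=\iota\circ f$, while the restriction $h$ to the non-collinear locus satisfies $h|_{t=1}=f\circ\iota$; continuity on finite skeleta is inherited verbatim. This gives $\iota\circ f\simeq\mathrm{id}$ and $f\circ\iota\simeq\mathrm{id}$, hence $F_{nc}(\mathbb{C}P^\infty,3)\simeq(\mathbb{C}P^\infty)^3$. The final homotopy equivalence $(\mathbb{C}P^\infty)^3\simeq F(\mathbb{C}P^\infty,3)$ is exactly Theorem \ref{theorem5.1} for $n=3$.

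The main obstacle I anticipate is making the non-collinearity claim for $H$ genuinely airtight rather than ``visually obvious'': one must rule out that the linear interpolation between a possibly collinear-looking $P$-part and the interleaved $Q$-part could drop rank at some intermediate $t$. The cleanest way around this is probably to observe that for $t>0$ the $t Q_j$ summands already force linear independence regardless of the $P_j$ (this is the content of $H((\mathbb{C}P^\infty)^3\times(0,1])\subset F_{nc}$, the non-collinear analogue of the inclusion noted at the end of the proof of Theorem \ref{theorem5.1}), and at $t=0$ we are starting inside $F_{nc}$ by hypothesis; so the whole path stays in $F_{nc}(\mathbb{C}P^\infty,3)$. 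With that observation in place the rest is a direct transcription of Theorem \ref{theorem5.1}.
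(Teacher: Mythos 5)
Your proposal is correct and is essentially the paper's own argument: the paper simply remarks that the interleaved points $Q_0,Q_1,Q_2$ from Theorem \ref{theorem5.1} are non-collinear, so the same map $f$ and straight-line homotopy $H$ restrict to give the equivalence $F_{nc}(\mathbb{C}P^{\infty},3)\simeq(\mathbb{C}P^{\infty})^3$. Your additional observation that $H((\mathbb{C}P^\infty)^3\times(0,1])\subset F_{nc}(\mathbb{C}P^\infty,3)$, which settles the non-collinearity of the intermediate triples, is exactly the right way to make the one-line proof airtight.
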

\begin{proof}
In the proof of Theorem \ref{theorem5.1}, the points $Q_0,Q_1,Q_2$ are non-collinear.
\end{proof}
\begin{rem}
More generally, $F_{nc}(\mathbb{C}P^{\infty},n)\simeq (\mathbb{C}P^{\infty})^{n}$, with the same proof.
\end{rem}


\section{Appendix: Counting partitions in three parts}

In this elementary section we give closed formulae for the number of partitions
\[\begin{array}{rl}
    P_3(k)= & card\{(a,b,c)\mid  a\geq b\geq c\geq0,\,\,\, a+b+c=k\}, \\
    P_{3,\leq m}(k)= & card\{(a,b,c)\mid m\geq a\geq b\geq c\geq 0,\,\,\, a+b+c=k\}
  \end{array}
\]
and we prove unimodality for the sequences of even and odd Betti
numbers for the configuration spaces discussed in this paper. We
will denote by $\lfloor x\rceil$ the nearest integer to the real
number $x$; for $x\in\mathbb{Z}+\frac{1}{2}$ this is not defined
but in all the following formulae this never happens.
\begin{prop}
a) The number of positive partitions of $k$ into three parts
$a+b+c=k,$ $ a\geq b\geq c\geq1$ is given by $\lfloor\frac{k^2}{12}\rceil$.

b) The number of non-negative partitions of $k$ is given by $P_3(k)=\lfloor\frac{(k+3)^2}{12}\rceil$
\end{prop}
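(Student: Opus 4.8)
The plan is to establish (a) and read off (b) from it, because the shift map $(a,b,c)\mapsto(a-1,b-1,c-1)$ is a bijection between the positive partitions of $k$ into three parts and the non-negative partitions of $k-3$; hence $P_3(k)=p_3(k+3)$, where $p_3(k):=\mathrm{card}\{(a,b,c)\mid a\geq b\geq c\geq 1,\ a+b+c=k\}$, and (b) is exactly (a) evaluated at $k+3$. So it suffices to produce a closed formula for $P_3(k)$ and match it with $\lfloor(k+3)^2/12\rceil$.

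For the closed formula I would use the orbit-counting already in force in Section \ref{section2}. Fix $k$ and let $N=\binom{k+2}{2}$ be the number of ordered triples of non-negative integers summing to $k$. Split these triples by their $\mc{S}_3$-orbit type, exactly as in the three-line decomposition of $\mathbb{Q}[\mc{S}_3]\langle a,b,c\rangle$: all entries distinct (orbit size $6$), exactly two equal (orbit size $3$), all equal (orbit size $1$, occurring iff $3\mid k$). The ordered triples with exactly two equal entries are counted by choosing the doubled value $x$ and the single value $y$ with $2x+y=k$ and $x\neq y$ — there are $\lfloor k/2\rfloor+1-r_3(k)$ such pairs — and then one of $3$ positions for $y$, giving $3(\lfloor k/2\rfloor+1-r_3(k))$; the all-equal case contributes $r_3(k)$. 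Subtracting, the number $E$ of ordered triples with distinct entries is $N-3(\lfloor k/2\rfloor+1-r_3(k))-r_3(k)$, and summing the three orbit contributions gives
$$P_3(k)=\frac{E}{6}+\bigl(\lfloor k/2\rfloor+1-r_3(k)\bigr)+r_3(k)=\frac{(k+1)(k+2)}{12}+\tfrac12\lfloor k/2\rfloor+\tfrac12+\tfrac13 r_3(k).$$

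It then remains to check that the right-hand side equals $\lfloor(k+3)^2/12\rceil$ (equivalently that the $p_3$-formula equals $\lfloor k^2/12\rceil$). Writing $\epsilon=k\bmod 2$, so that $\tfrac12\lfloor k/2\rfloor=\tfrac14 k-\tfrac14\epsilon$, a direct subtraction gives
$$P_3(k)-\frac{(k+3)^2}{12}=-\frac{\epsilon}{4}-\frac{1}{12}+\frac{r_3(k)}{3},$$
which depends only on $k\bmod 6$; running through the six residues yields the values $\tfrac14,\ -\tfrac13,\ -\tfrac1{12},\ 0,\ -\tfrac1{12},\ -\tfrac13$, each of absolute value $<\tfrac12$. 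Since $P_3(k)\in\mathbb{Z}$, it must be the integer nearest to $(k+3)^2/12$ (which is never a half-integer, a square never being $\equiv 6\pmod{12}$), proving (b) and hence (a). The only step demanding care is the ``exactly two equal'' tally — specifically excluding $x=y$, i.e. $3\mid k$, so as not to double-count the all-equal triple — and everything else is a routine six-case verification; a generating-function argument, via partial fractions of $\prod_{i=1}^{3}(1-q^i)^{-1}$ and extraction of the coefficient of $q^k$, is an equally viable route and ends with the same finite check.
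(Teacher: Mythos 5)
Your proof is correct, and it is genuinely more self-contained than what the paper does. For part (a) the paper simply cites references: the six-case expanded version is attributed to Hall's \emph{Combinatorial Theory} and the closed nearest-integer form to Yaglom--Yaglom; the only argument actually written out is the shift bijection $(a,b,c)\leftrightarrow(a+1,b+1,c+1)$ relating non-negative partitions of $k$ to positive partitions of $k+3$, which you also use (in the opposite direction, deducing (a) from (b) rather than (b) from (a) --- immaterial, since the bijection is the same). What you add is a complete elementary derivation of the closed form via $\mathcal{S}_3$-orbit counting on the $\binom{k+2}{2}$ ordered triples, which is pleasantly consonant with the paper's own Section 2, where exactly this three-way split by orbit type (all distinct, exactly two equal, all equal) underlies the computation of the multiplicities $\mu_\lambda(k)$. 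I checked your arithmetic: the identity $P_3(k)=\frac{(k+1)(k+2)}{12}+\frac12\lfloor k/2\rfloor+\frac12+\frac13 r_3(k)$ is right, the discrepancy $P_3(k)-\frac{(k+3)^2}{12}=-\frac{\epsilon}{4}-\frac{1}{12}+\frac{r_3(k)}{3}$ is right, the six residue values $\frac14,-\frac13,-\frac1{12},0,-\frac1{12},-\frac13$ are all correct and of absolute value less than $\frac12$, and your observation that $(k+3)^2/12$ is never a half-integer (a square is never $\equiv 6\pmod{12}$) disposes of the only ambiguity in $\lfloor\cdot\rceil$ --- a point the paper itself flags but does not verify for this formula. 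Your careful exclusion of the $x=y$ case in the ``exactly two equal'' tally, contributing the $-r_3(k)$, is precisely the step where a sloppier count would go wrong. In short: the paper buys brevity by outsourcing (a) to the literature; your version buys a proof the reader can check in place, at the cost of a short residue computation.
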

\begin{proof}
a) An expanded version for this (six cases) is given in \cite{H}. The closed form can be found in \cite{YY}.

b) The non-negative partitions $(a,b,c)$ of $k$ are in bijection with positive partitions $(a+1,b+1,c+1)$ of $k+3$.
\end{proof}
\begin{prop}
The number of bounded partitions $P_{3,\leq m}(k)$ satisfy the following properties:

a) For $0\leq k\leq m$, $P_{3,\leq m}(k)=P_{3}(k)=\lfloor\frac{(k+3)^2}{12}\rceil$.

b) For $k\geq 3m+1$, $P_{3,\leq m}(k)=0$.

c) For $\frac{3m}{2}\leq k\leq 3m$, $P_{3,\leq m}(k)=P_{3,\leq m}(3m-k).$
\end{prop}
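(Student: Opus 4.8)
The plan is to dispatch the three items by elementary arguments, reusing the closed formula for $P_3$ established in the preceding proposition. First I would prove a) and b) together, from the trivial comparison between $k$ and the largest part $a$: if $a\geq b\geq c\geq 0$ and $a+b+c=k$ with $k\leq m$, then $a\leq a+b+c=k\leq m$, so the constraint $a\leq m$ is automatically satisfied, hence the sets counted by $P_{3,\leq m}(k)$ and by $P_3(k)$ coincide and the value $\lfloor\frac{(k+3)^2}{12}\rceil$ is precisely part b) of the preceding proposition. Conversely, any triple counted by $P_{3,\leq m}(k)$ satisfies $k=a+b+c\leq 3m$, so this set is empty as soon as $k\geq 3m+1$, which is b).

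For c) the idea is complementation. I would introduce the map
$$\kappa(a,b,c)=(m-c,\ m-b,\ m-a)$$
and verify that it carries a bounded partition of $k$ to a bounded partition of $3m-k$: from $m\geq a\geq b\geq c\geq 0$ one reads off $m\geq m-c\geq m-b\geq m-a\geq 0$, and the new parts sum to $3m-(a+b+c)=3m-k$. Since $\kappa$ is visibly an involution, it is a bijection between the sets of cardinalities $P_{3,\leq m}(k)$ and $P_{3,\leq m}(3m-k)$, which gives the stated equality; in fact this identity holds for every $k$, the hypothesis $\frac{3m}{2}\leq k\leq 3m$ being simply the range in which it will be used, reducing the count in the ``upper half'' to that in the ``lower half''.

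I do not expect a genuine obstacle. The only point deserving care is the bookkeeping of inequalities under $\kappa$ in part c) --- checking simultaneously that the image triple is weakly decreasing and that its entries stay within $[0,m]$, with the boundary cases $c=0$ and $a=m$ handled correctly --- but this is a one-line verification; everything else is immediate from the definitions and from the formula for $P_3(k)$.
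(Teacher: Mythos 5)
Your proof is correct and follows essentially the same route as the paper: parts a) and b) are the immediate observations the paper dismisses as obvious, and part c) uses exactly the same complementation bijection $(a,b,c)\leftrightarrow(m-c,m-b,m-a)$. Your added remark that the symmetry in c) holds for all $0\leq k\leq 3m$ is accurate but does not change the argument.
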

\begin{proof}
a) and b) are obvious, and for c) use the bijection $$(a,b,c)\leftrightarrow (m-c,m-b,m-a).$$
\end{proof}
From this result, it is sufficient to compute the value of $P_{3,\leq m}(k)$ for $k$ between
$m+1$ and $\lfloor\frac{3m}{2}\rfloor$.
\begin{prop}
For $m+1\leq k\leq \lfloor\frac{3m}{2}\rfloor$, the number of bounded partitions is given by
$$P_{3,\leq m}(k)=\big\lfloor\frac{(k+3)^2}{12}\big\rceil-\big\lfloor \frac{(k-m-1)^2}{4}\big\rceil+m-k.$$
\end{prop}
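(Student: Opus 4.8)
The plan is to obtain $P_{3,\leq m}(k)$ by deleting from $P_3(k)=\big\lfloor\frac{(k+3)^2}{12}\big\rceil$ (the formula for $P_3$ established just above) exactly those partitions that violate the upper bound $a\leq m$, and then to repackage the resulting count into the asserted closed form.

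First I would record that in the range $m+1\leq k\leq\lfloor\frac{3m}{2}\rfloor$ at most one part of a partition $a+b+c=k$, $a\geq b\geq c\geq 0$, can exceed $m$, and it must be the largest one $a$: if $b\geq m+1$ then $a+b\geq 2m+2>\frac{3m}{2}\geq k$, a contradiction. Hence the partitions counted by $P_3(k)$ but not by $P_{3,\leq m}(k)$ are precisely those with $a\geq m+1$. Moreover, when $a\geq m+1$ one automatically has $b\leq b+c=k-a\leq k-m-1\leq\lfloor\frac m2\rfloor-1<m+1\leq a$, so the ordering $a\geq b$ and the bounds $b,c\leq m$ hold for free. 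Writing $a=m+1+t$ with $t\geq 0$ therefore gives a bijection between these ``overflow'' triples and the set $\{(t,b,c)\mid t\geq 0,\ b\geq c\geq 0,\ t+b+c=k-m-1\}$.

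Next I would count this last set by grouping according to the value of $t$, obtaining $\sum_{j=0}^{k-m-1}\big(\lfloor j/2\rfloor+1\big)$, and evaluate it by a short parity case analysis: for every integer $n\geq 0$ one has $\sum_{j=0}^{n}\big(\lfloor j/2\rfloor+1\big)=\big\lfloor\frac{(n+2)^2}{4}\big\rceil$. With $n=k-m-1$ this equals $\big\lfloor\frac{(k-m+1)^2}{4}\big\rceil$, so at this stage $P_{3,\leq m}(k)=\big\lfloor\frac{(k+3)^2}{12}\big\rceil-\big\lfloor\frac{(k-m+1)^2}{4}\big\rceil$.

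Finally I would reconcile this with the statement via the identity $\big\lfloor\frac{(u+1)^2}{4}\big\rceil=\big\lfloor\frac{(u-1)^2}{4}\big\rceil+u$, valid for every integer $u$ since $(u+1)^2=(u-1)^2+4u$ and adding the integer $u$ does not alter which quarter-integer is being rounded; applied with $u=k-m$ it converts $-\big\lfloor\frac{(k-m+1)^2}{4}\big\rceil$ into $-\big\lfloor\frac{(k-m-1)^2}{4}\big\rceil+(m-k)$, which is exactly the claimed expression. The only delicate point is the parity bookkeeping in the summation formula and in this rounding identity; the rest is direct substitution, so I expect that to be where the care is needed rather than any genuine obstacle.
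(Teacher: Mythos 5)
Your proof is correct and follows essentially the same route as the paper's: both subtract from $P_3(k)=\lfloor(k+3)^2/12\rceil$ the partitions whose largest part exceeds $m$, count these by summing $\lfloor j/2\rfloor+1$ over $j=0,\dots,k-m-1$, and use $\sum_{j=0}^p\lfloor j/2\rfloor=\lfloor p^2/4\rfloor$ (in your case repackaged as $\lfloor(n+2)^2/4\rceil$ and then shifted) to reach the closed form. The extra care you take in justifying that only the largest part can overflow and that the ordering constraint is automatic is a welcome addition, but the argument is the same.
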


\begin{proof}
From the total number of non-negative partitions, $P_{3}(k)$, the number of solutions of $a+b+c=k$ satisfying
$\max(a,b,c)\geq m+1$ should be subtracted. Because $a\geq b\geq c$ and $k\leq \lfloor \frac{3m}{2}\rfloor$, only
$a$ could be greater than $m$: $a=m+1,m+2,\ldots,k$. For a fixed $a$, the number of solutions of
$$b+c=k-a,\,\,\,\,\, b\geq c\geq 0$$
equals $\lfloor\frac{k-a}{2}\rfloor+1$. A simple computation gives
$\lfloor\frac{0}{2}\rfloor+\lfloor\frac{1}{2}\rfloor+\ldots+\lfloor\frac{p}{2}\rfloor=\lfloor\frac{p^2}{4}\rfloor,$
and taking $p=k-m-1$, we obtain the formula.
\end{proof}
Now we show that the sequences of even and odd Betti numbers of the cohomology algebra
$H^*(C(\mathbb{C}P^m,3))\cong H^*(F(\mathbb{C}P^m,3))(3)$ are unimodal.
\begin{lem}

a) The sequence $(P_3(k))_{k\geq 0}$ is increasing (strictly for $k\geq1$).

b) The sequence $(P_{3,\leq m}(k))_{0\leq k\leq 3m}$ is unimodal and symmetric, more precisely:
$\begin{array}{c}
   1=P_{3,\leq m}(0)=P_{3,\leq m}(1)<P_{3,\leq m}(2)<\ldots<P_{3,\leq m}(\lfloor\frac{3m}{2}\rfloor)= \\
   =P_{3,\leq m}(\lceil\frac{3m}{2}\rceil)>\ldots>P_{3,\leq m}(3m-1)=P_{3,\leq m}(3m)=1
 \end{array}$
\end{lem}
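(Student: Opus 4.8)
The plan is to prove the two statements separately, each time reducing to an elementary arithmetic property of the explicit formulae established in the three preceding propositions of the Appendix.

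For part (a), the cleanest approach is to use the formula $P_3(k)=\lfloor(k+3)^2/12\rceil$ and show directly that $P_3(k+1)\ge P_3(k)$, with strict inequality for $k\ge1$. Since $(k+4)^2-(k+3)^2=2k+7\ge 7$, the two real numbers $(k+3)^2/12$ and $(k+4)^2/12$ differ by at least $7/12>1/2$, so rounding to the nearest integer cannot decrease, and in fact for $k\ge 1$ the gap $2k+7\ge 9$ forces $(k+4)^2/12-(k+3)^2/12\ge 3/4$, which is enough to guarantee a strict jump (one checks that a gap exceeding $1/2$ between successive values of a function, together with the fact that neither lands on a half-integer, forces $\lfloor\cdot\rceil$ to strictly increase). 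Alternatively, and perhaps more transparently, one can just invoke the combinatorial description: a non-negative partition of $k$ into three parts $a\ge b\ge c\ge 0$ maps injectively to a partition of $k+1$ via $(a,b,c)\mapsto(a+1,b,c)$, giving $P_3(k)\le P_3(k+1)$; strictness for $k\ge 1$ follows because $(k+1,0,0)$ is then not in the image.

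For part (b), symmetry is already recorded: Proposition (the one with item c) gives $P_{3,\le m}(k)=P_{3,\le m}(3m-k)$ via the involution $(a,b,c)\leftrightarrow(m-a,m-b,m-c)$ (stated there for $3m/2\le k\le 3m$, but the bijection is defined for all $k$). So it remains to prove strict monotonicity on the initial segment $0\le k\le \lfloor 3m/2\rfloor$, i.e. $P_{3,\le m}(k)<P_{3,\le m}(k+1)$ for $1\le k<\lfloor 3m/2\rfloor$, together with $P_{3,\le m}(0)=P_{3,\le m}(1)=1$. The latter equalities are immediate. For the monotonicity I would split into two ranges. On $0\le k\le m-1$ we have $P_{3,\le m}(k)=P_3(k)$ and $P_{3,\le m}(k+1)=P_3(k+1)$ by part (a) of the third Appendix proposition, so strict monotonicity is exactly part (a) of this lemma (for $k\ge 1$). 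On the range $m\le k<\lfloor 3m/2\rfloor$ I would again use the injective map $(a,b,c)\mapsto(a+1,b,c)$ on bounded partitions: it sends a partition of $k$ with $m\ge a\ge b\ge c\ge 0$ to a partition of $k+1$ with $m\ge a+1$ provided $a\le m-1$. The only partitions of $k$ not covered are those with $a=m$; but since $k\le\lfloor 3m/2\rfloor-1$ one checks the target $P_{3,\le m}(k+1)$ contains at least one extra element not hit (for instance a partition with largest part $\le m-1$, which exists because $k+1\le\lfloor 3m/2\rfloor$ and $\lceil(k+1)/2\rceil\le m$ forces the partition $(\lceil(k+1)/2\rceil,\lfloor(k+1)/2\rfloor,0)$ to have all parts $\le m$), giving strictness. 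Alternatively one can read off strictness from the closed formula of the fourth Appendix proposition by checking that the difference of consecutive values is positive.

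The main obstacle is the bookkeeping at the transition point $k=m$ and near the peak $k=\lfloor 3m/2\rfloor$: one must be careful that the injection argument genuinely produces a \emph{new} partition in the target and does not accidentally rely on a partition whose largest part exceeds $m$, and one must handle the parity of $m$ (the peak is a single value $P_{3,\le m}(\lfloor 3m/2\rfloor)=P_{3,\le m}(\lceil 3m/2\rceil)$ when $m$ is odd, and a genuine plateau of length one, i.e. the middle value is attained twice, when $m$ is even — consistent with the way the statement is written). None of this is deep, but it is the place where a careless argument would break, so I would verify the boundary cases $k=m-1,m,\lfloor 3m/2\rfloor-1$ explicitly, and otherwise let the injection $(a,b,c)\mapsto(a+1,b,c)$ do the work.
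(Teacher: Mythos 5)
Your part (a) contains two slips. The rounding argument is not sound: a gap of $3/4$ between consecutive values of $(k+3)^2/12$ does not force the nearest integers to differ --- for instance $0.55$ and $1.3$ differ by $3/4$ and both round to $1$; one would need a gap of at least $1$, or an analysis of $(k+3)^2$ modulo $12$. The combinatorial alternative is the right idea, but your witness for strictness is wrong: $(k+1,0,0)$ \emph{is} in the image of $(a,b,c)\mapsto(a+1,b,c)$, being the image of $(k,0,0)$. The image of that injection is exactly the set of partitions of $k+1$ with $a>b$, so the correct witnesses are the partitions with two equal largest parts, $(a,a,b)$ with $2a+b=k+1$, which exist for all $k+1\geq 2$; this is precisely the argument the paper gives.

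Part (b) has a genuine gap on the range $m\leq k<\lfloor 3m/2\rfloor$, and it cannot be repaired, because the strict inequality is false there. Your map $(a,b,c)\mapsto(a+1,b,c)$ is simply undefined on the partitions of $k$ with largest part $a=m$, so exhibiting ``one extra element not hit'' in the target does not compensate for dropping possibly several such partitions; you do not even obtain $P_{3,\leq m}(k)\leq P_{3,\leq m}(k+1)$ this way. Worse, the inequality you are trying to prove fails exactly at the transition point you flagged: $P_{3,\leq 2}(2)=P_{3,\leq 2}(3)=2$, $P_{3,\leq 3}(3)=P_{3,\leq 3}(4)=3$, $P_{3,\leq 4}(4)=P_{3,\leq 4}(5)=4$, so only weak monotonicity holds beyond $k=m-1$ and the chain of strict inequalities in the statement must be weakened accordingly. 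For what it is worth, the paper's own proof is also defective at this point: it extends the map by the rule $(m,b,c)\mapsto(m,b+1,c)$, but the combined map is not injective (for $m=3$, $k=3$, both $(2,1,0)$ and $(3,0,0)$ are sent to $(3,1,0)$). Your instinct to scrutinize the boundary case $k=m$ was therefore correct --- that is where the statement itself, not merely the proof, breaks down; the symmetry and unimodality claims survive, but the strict form is what is invoked later in the proof of Corollary 1.2 and should be used there only in the range $k\leq m-1$.
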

\begin{proof}

a) A solution of the equation $a+b+c=k$ gives the solution $(a+1)+b+c=k+1$ and solutions of the form
$a+a+b=k+1$ $(a\geq b)$ can not be obtained in this way (for $k\neq 1$ one can find solutions of the form $a+a+b=k+1$).

b) Similarly, for $k\leq\lfloor\frac{3m}{2}\rfloor$, a solution $a+b+c=k,\,\,m-1\geq a\geq b\geq c$, gives
a solution $(a+1)+b+c=k+1,\,\,m\geq a+1\geq b\geq c$, and a solution of the form $m+b+c=k$ gives a solution
$m+(b+1)+c=k+1$ ($b$ can not be equal to $m$ for $k\leq\lfloor\frac{m}{2} \rfloor$). Again we have strict
inequality for $k\geq 1$ because solutions of the form $a+a+b=k,\,\,(a\geq b),$ are not in the image of the previous injective map.
\end{proof}

\noindent {\em Proof of Corollary 1.2.} Case 1: Betti numbers of $F(\mathbb{C}P^m,3)$. The even Poincar\'{e} polynomial equals
$(1+t^2+\ldots +t^{2m-1})^3$ and the odd Poincar\'{e} polynomial equals $t^{4m-1}(1+t^2+t^4+\ldots+ t^{2m-2})$.

Case 2: $V(3)$-Betti numbers. The sequence of odd Betti numbers is $\b_1=\b_3=\ldots=\b_{4m-3}=0$, $\b_{4m-1}=\b_{4m+1}=\ldots=\b_{6m-3}=1$.
The sequence of even Betti numbers can be splitted into four parts:

i) If $0\leq k\leq m-1$, $\b_{2k}=P_{3,\leq m}(k)$: the sequence is increasing (strictly increasing for $k\geq 1$);

ii) If $m\leq k\leq\lfloor\frac{3m}{2}\rfloor$, $\b_{2k}=P_{3,\leq m}(k)+m-k-1$: the sequence is increasing because
$P_{3,\leq m}(k)<P_{3,\leq m}(k+1)$ implies
$$P_{3,\leq m}(k)+m-k-1\leq P_{3,\leq m}(k+1)+m-k-2;$$

iii) If $\lceil\frac{3m}{2}\rceil\leq k\leq 2m-1$,
$\b_{2k}=P_{3,\leq m}(k)-3m+k$: the sequence is the sum of two
strictly decreasing sequences;

iv) If $2m\leq k\leq 3m-1$, $\b_{2k}=P_{3,\leq m}-3m+k$: the sequence is decreasing as in the second case.

Finally, the join of these sequences is unimodal:

a) $\b_{2m-2}\leq\b_{2m}$, because $P_{3,\leq m}(m-1)\leq P_{3,\leq m}(m)-1$ (here we need $m\geq2$)

b) $\b_{2\lfloor\frac{3m}{2}\rfloor}\geq \b_{2\lceil\frac{3m}{2}\rceil}$; $P_{3,\leq m}(\lfloor\frac{3m}{2}
\rfloor)+m-\lfloor\frac{3m}{2}\rfloor-1\geq P_{3,\leq m}(\lceil\frac{3m}{2}\rceil)+m-\lceil\frac{3m}{2}\rceil-1;$

c) $\b_{4m-2}\geq\b_{4m}$, because $P_{3,\leq m}(2m-1)-m\geq P_{3,\leq m}(2m)-m$.

Case 3: $V(2,1)$-Betti numbers. Like in the previous case, the terms of the first two sums in the Proposition \ref{prop311}
are given by an increasing sequence and those of the last two sums are given by a decreasing sequence.

Case 4: $V(1,1,1)$-Betti numbers. Using Proposition \ref{prop312} this is obvious: the even sequence starts
with three zeros, next is the increasing sequence $P_{3,\leq m-2}(k-3)$, $3\leq k\leq\lfloor\frac{3m}{2}\rfloor$,
next the symmetric decreasing sequence, at the end there are three zeros and the odd sequence is constant zero.
\hfill{$\Box$}
\begin{rems}
a) In the special case of the projective line $(m=1)$, the
sequences of Betti numbers are still unimodal:
$$P_{C(\mathbb{C}P^1,3)}(t)=1+t^3.$$

b) The sequences of even Betti numbers is not symmetric, for instance
$$P_{C(\mathbb{C}P^2,3)}(t)=(1+t^2+2t^4+t^6)+(t^7+t^9).$$
\end{rems}

\end{document}